\numberwithin{equation}{section}    % *must* be *before* cleveref!
\definecolor{orng}{HTML}{F35400}
\definecolor{bleu}{HTML}{BCE6F2}
\definecolor{dblue}{HTML}{0455BF}
\definecolor{dgreen}{HTML}{02724A}
\definecolor{dgreen2}{HTML}{025951}
\definecolor{dred}{HTML}{D90404}
\definecolor{dviolet}{HTML}{42208C}
\definecolor{labelkey}{HTML}{025951}
\definecolor{refkey}{HTML}{025951}
\definecolor{refkey}{rgb}{0,0.6,0.0}
\definecolor{Brown}{rgb}{0.45,0.0,0.05}
\definecolor{dgreen}{rgb}{0.00,0.49,0.00}
\definecolor{dblue}{rgb}{0,0.18,0.75}
\definecolor{lblue}{rgb}{0,0.7,0.75}
\definecolor{dviolet}{HTML}{9400D3}
\definecolor{pblue}{rgb}{0.1176,0.5647,1}
\definecolor{nblue}{rgb}{0.2,0.3,1}
\definecolor{pgreen}{rgb}{0.1961,0.8039,0.1961}
\definecolor{ngreen}{rgb}{0.0,0.6,0.3}
\definecolor{pred}{rgb}{1.0,0.2706,0.0}
\definecolor{magenta}{HTML}{ff00ff}
\definecolor{hotmagenta}{rgb}{1.0, 0.11, 0.81}
\definecolor{dorng}{rgb}{0.91,0.41,0.17}
\definecolor{dgray}{rgb}{0.41,0.41,0.41}
\definecolor{azure}{rgb}{0.0, 0.5, 1.0}
\setlist{itemsep=-2.0pt}
\g@addto@macro\th@plain{
\thm@headfont{\bfseries\sffamily}
\thm@notefont{}}
\g@addto@macro\th@definition{
\thm@headfont{\bfseries\sffamily}
\thm@notefont{}}
\g@addto@macro\th@remark{
\thm@headfont{\bfseries\sffamily}
\thm@notefont{}}
\theoremstyle{plain}
\newtheorem{theorem}{Theorem}[section]
\newtheorem{proposition}[theorem]{Proposition}
\newtheorem{corollary}[theorem]{Corollary}
\newtheorem{lemma}[theorem]{Lemma}
\theoremstyle{definition}
\newtheorem{definition}[theorem]{Definition}
\newtheorem{example}[theorem]{Example}
\newtheorem{problem}[theorem]{Problem}
\newtheorem{assumption}[theorem]{Assumption}
\theoremstyle{remark}
\newtheorem{remark}[theorem]{Remark}
\newtheorem{algorithm}[theorem]{Algorithm}
\DeclareMathDelimiterSet{\scal}[2]{
\selectdelim[l]<{#1}
\mathpunct{}\selectdelim[p]|
{#2}\selectdelim[r]>}
\DeclareMathDelimiterSet{\EC}[2]{
\mathsf{E}\selectdelim[l]({#1}
\mathpunct{}\selectdelim[p]|
{#2}\selectdelim[r])}
\newcommand{\menge}[2]{\bigl\{{#1}\mid{#2}\bigr\}} 
\DeclareMathDelimiterSet{\Menge}[2]{\selectdelim[l]\{
{#1}\selectdelim[m]|{#2}\selectdelim[r]\}}
\def\upintkern@{\mkern-7mu\mathchoice{\mkern-3.5mu}{}{}{}}
\def\upintdots@{\mathchoice{\mkern-4mu\@cdots\mkern-4mu}%
{{\cdotp}\mkern1.5mu{\cdotp}\mkern1.5mu{\cdotp}}%
{{\cdotp}\mkern1mu{\cdotp}\mkern1mu{\cdotp}}%
{{\cdotp}\mkern1mu{\cdotp}\mkern1mu{\cdotp}}}
\DeclareFontFamily{OMX}{mdbch}{}
\DeclareFontShape{OMX}{mdbch}{m}{n}{ <->s * [0.8]  mdbchr7v }{}
\DeclareFontShape{OMX}{mdbch}{b}{n}{ <->s * [0.8]  mdbchb7v }{}
\DeclareFontShape{OMX}{mdbch}{bx}{n}{<->ssub * mdbch/b/n}{}
\DeclareSymbolFont{uplargesymbols}{OMX}{mdbch}{m}{n}
\DeclareMathSymbol{\upintop}{\mathop}{uplargesymbols}{82}
\DeclareMathSymbol{\upointop}{\mathop}{uplargesymbols}{"48}
\renewcommand{\int}{\DOTSI\upintop\ilimits@}
\renewcommand{\oint}{\DOTSI\upointop\ilimits@}
\newcommand{\MM}{\mathscr{m}}
\newcommand{\LL}{\mathscr{l}}
\newcommand{\CC}{\mathscr{c}}
\newcommand{\bunder}[2][4]{\mathrlap{\mkern\the\numexpr#1/2mu\relax\underline{\phantom{\mathrm{#2}\mkern-#1mu}}}#2}
\newcommand{\RR}{\mathbb{R}}
\newcommand{\NN}{\mathbb{N}}
\newcommand{\XX}{\EuScript{X}}
\newcommand{\WS}{\mathsf{W}}
\newcommand{\MS}{\mathsf{M}}
\newcommand{\AS}{\mathsf{A}}
\newcommand{\BS}{\mathsf{B}}
\newcommand{\CS}{\mathsf{C}}
\newcommand{\DS}{\mathsf{D}}
\newcommand{\RS}{\mathsf{R}}
\newcommand{\QS}{\mathsf{Q}}
\newcommand{\HHS}{\ensuremath{\boldsymbol{\mathsf H}}}
\newcommand{\GGS}{\ensuremath{{\boldsymbol{\mathsf G}}}}
\newcommand{\HS}{\mathsf{H}}
\newcommand{\GS}{\mathsf{G}}
\newcommand{\ZS}{\mathsf{Z}}
\newcommand{\zS}{\mathsf{z}}
\newcommand{\nS}{{\mathsf{n}}}
\newcommand{\mS}{{\mathsf{m}}}
\newcommand{\nnn}{\mathsf{n}\in\mathbb{N}}
\newcommand{\jjj}{\mathsf{j}\in\mathbb{N}}
\newcommand{\II}{\ensuremath{\mbox{\large\ttfamily{I}}}}
\newcommand{\KK}{\ensuremath{\mbox{\large\ttfamily{K}}}}
\newcommand{\iii}{\ensuremath{\mathsf{i}\in%
\mbox{\scriptsize\ttfamily{I}}}}
\newcommand{\jJj}{\ensuremath{\mathsf{j}\in%
\mbox{\scriptsize\ttfamily{I}}}}
\newcommand{\jjJ}{\ensuremath{\mathsf{j}\in%
\mbox{\scriptsize\ttfamily{K}}}}
\newcommand{\kkk}{\ensuremath{\mathsf{k}\in%
\mbox{\scriptsize\ttfamily{K}}}}
\newcommand{\iS}{\mathsf{i}}
\newcommand{\dS}{\mathsf{d}}
\newcommand{\jS}{\mathsf{j}}
\newcommand{\kS}{\mathsf{k}}
\newcommand{\xS}{\mathsf{x}}
\newcommand{\yS}{\mathsf{y}}
\newcommand{\sS}{\mathsf{s}}
\newcommand{\vS}{\mathsf{v}}
\newcommand{\rS}{\mathsf{r}}
\newcommand{\lS}{\mathsf{l}}
\newcommand{\LS}{\mathsf{L}}
\newcommand{\LLS}{\boldsymbol{\mathsf{L}}}
\newcommand{\BE}{\EuScript{B}}
\newcommand{\FE}{\EuScript{F}}
\newcommand{\sad}{\bunder{\boldsymbol{\EuScript{S}}}}
\newcommand{\XXX}{\ensuremath{\bunder{\boldsymbol{\mathsf{X}}}}}
\newcommand{\pinf}{{+}\infty}
\newcommand{\minf}{{-}\infty}
\newcommand{\zeroun}{\intv[o]{0}{1}}
\newcommand{\RXX}{\intv{\minf}{\pinf}}
\newcommand{\RX}{\intv[l]0{\minf}{\pinf}}
\newcommand{\RP}{\intv[r]0{0}{\pinf}}
\newcommand{\RPP}{\intv[o]0{0}{\pinf}}
\newcommand{\emp}{\varnothing}
\newcommand{\WC}{\ensuremath{{\mathfrak W}}}
\newcommand{\SC}{\ensuremath{{\mathfrak S}}}
\newcommand{\Sum}{\displaystyle\sum}
\newcommand{\minimize}[2]{\underset{\substack{{#1}}}
{\operatorname{minimize}}\;\;#2}
\newcommand{\infconv}{\mathbin{\mbox{\small$\square$}}}
\newcommand{\pushfwd}%
{\ensuremath{\mbox{\Large$\,\triangleright\,$}}}
\DeclareMathOperator{\argmin}{argmin}
\newcommand{\Id}{\mathsf{Id}}
\DeclareMathOperator{\card}{card}
\DeclareMathOperator{\dom}{dom}
\DeclareMathOperator{\Fix}{Fix}
\DeclareMathOperator{\gra}{gra}
\DeclareMathOperator{\zer}{zer}
\DeclareMathOperator{\prox}{prox}
\newcommand{\EE}{\mathsf{E}}
\newcommand{\PP}{\mathsf{P}}
\renewcommand{\leq}{\leqslant}
\renewcommand{\geq}{\geqslant}
\newcommand{\exi}{\exists\,}
\newcommand{\weakly}{\rightharpoonup}
\newcommand{\Pas}{\text{\normalfont$\PP$-a.s.}}
\newcommand{\Pto}{\ensuremath{\overset{\mathsf{P}}{\to}}}
\renewenvironment{abstract}{%
\vspace*{-0.50cm}
\small
\quotation%
\noindent%
{\normalfont\bfseries\sffamily
\nobreak\abstractname\ }%
}{%
\endquotation%
\medskip
}
\renewcommand{\abstractname}{Abstract.}
\newcommand\keywordsname{Keywords.}
\newenvironment{keywords}
{\renewcommand\abstractname{\keywordsname}\begin{abstract}}
{\end{abstract}}
\newcommand{\email}[1]{\href{mailto:#1}{\nolinkurl{#1}}}
\renewcommand*\Affilfont{\normalfont\normalsize}
\newcommand\affilcr{\protect\\ \protect\Affilfont}
\renewcommand\AB@affilsepx{\protect\\[0.5em]}
\author[1]{Patrick L. Combettes}
\affil[1]{North Carolina State University
\affilcr
Department of Mathematics
\affilcr
Raleigh, NC 27695, USA
\affilcr
\email{plc@math.ncsu.edu}
}
\author[2]{Javier I. Madariaga}
\affil[2]{North Carolina State University
\affilcr
Department of Mathematics
\affilcr
Raleigh, NC 27695, USA
\affilcr
\email{jimadari@ncsu.edu}
}
\begin{document}

\title{ 
Asymptotic Analysis of an Abstract Stochastic Scheme for
Solving Monotone Inclusions\thanks{Contact author: 
P. L. Combettes. Email: \email{plc@math.ncsu.edu}.
Phone: +1 919 515 2671.
This work was supported by the National
Science Foundation under grant DMS-2513409.
}}

\date{~}

\maketitle

\thispagestyle{empty}
\begin{abstract} 
We propose an abstract stochastic scheme for solving a broad range
of monotone operator inclusion problems in Hilbert spaces. This
framework allows for the introduction of stochasticity at several
levels in monotone operator splitting methods: approximation of
operators, selection of coordinates and operators in
block-iterative implementations, and relaxation
parameters. The analysis involves an abstract reduced
inclusion model with two operators. At each iteration of the
proposed scheme, stochastic approximations to points
in the graphs of these two operators are used to form the update.
The results are applied to derive the almost sure and $L^2$
convergence of stochastic versions of the proximal point algorithm,
as well as of randomized block-iterative projective splitting
methods for solving systems of coupled inclusions involving a mix
of set-valued, cocoercive, and Lipschitzian monotone operators
combined via various monotonicity-preserving operations. 
\end{abstract}

\begin{keywords}
monotone inclusion,
proximal point algorithm, 
projective splitting,
randomized block-iterative splitting,
stochastic algorithm.
\end{keywords}

%\begin{MSC}
%\end{MSC}

\setcounter{page}{0}

\newpage

\section{Introduction}
\label{sec:1}
The object of the present paper is to study the asymptotic behavior
of an abstract stochastic scheme for solving a broad class of 
monotone inclusion problems in Hilbert spaces. As in the
deterministic methods unified in \cite{Acnu24}, our analysis is
articulated around the following two-operator abstract model.

\begin{problem}
\label{prob:19}
Let $\HS$ be a separable real Hilbert space, let 
$\WS\colon\HS\to 2^{\HS}$ be maximally monotone, let 
$\upalpha\in\RPP$, and let $\mathsf{C}\colon\HS\to\HS$ be 
$\upalpha$-cocoercive and such that 
$\ZS=\zer(\WS+\mathsf{C})\neq\emp$. The task is to 
\begin{equation}
\text{find}\;\xS\in\HS\;\;\text{such that}\;\;\mathsf{0}\in
\WS\xS+\CS\xS.
\end{equation}
\end{problem}

If the resolvent of $\WS$ were numerically tractable,
Problem~\ref{prob:19} could be solved via the classical
forward-backward algorithm \cite{Jmaa15,Merc79,Tsen91}.
However, in the general inclusion models to be considered, $\WS$ 
is typically a composite operator defined on a product space, 
which makes such an assumption unrealistic. Instead, we merely 
assume the ability to pick points in the graph of $\WS$.
This leads us to the following deterministic algorithmic 
template from \cite[Section~4.4]{Acnu24}, which was first 
considered in \cite[Proposition~3]{Sadd22} in the context 
of saddle projective splitting methods.

\begin{algorithm}
\label{algo:19}
In the setting of Problem~\ref{prob:19}, let $\xS_0\in\HS$ and 
iterate
\begin{equation}
\label{e:a19}
\begin{array}{l}
\text{for}\;\nS=0,1,\ldots\\
\left\lfloor
\begin{array}{l}
\text{take}\;
(\mathsf{w}_{\nS},\mathsf{w}_{\nS}^*)\in\gra\WS\;\text{and}\;
\mathsf{q}_{\nS}\in\HS\\
\mathsf{t}_{\nS}^*=\mathsf{w}_{\nS}^*+\CS \mathsf{q}_{\nS}\\
\upDelta_{\nS}=\scal{\xS_{\nS}-
\mathsf{w}_{\nS}}{\mathsf{t}_{\nS}^*}-
(4\upalpha)^{-1}\|\mathsf{w}_{\nS}-\mathsf{q}_{\nS}\|^2\\[2mm]
\uptheta_{\nS}=
\begin{cases}
\dfrac{\upDelta_{\nS}}
{\|\mathsf{t}_{\nS}^*\|^2},&\text{if}\:\:\upDelta_{\nS}>0;\\
0,&\text{otherwise}\\
\end{cases}\\
\dS_{\nS}=\uptheta_{\nS}\mathsf{t}_{\nS}^*\\
\text{take}\;\uplambda_{\nS}\in\left]0,2\right[\\
\xS_{\nS+1}=\xS_{\nS}-\uplambda_{\nS}\dS_{\nS}.
\end{array} 
\right. 
\end{array} 
\end{equation}
\end{algorithm}

As shown in \cite{Acnu24}, Algorithm~\ref{algo:19} is at the core
of a broad range of classical and block-iterative deterministic
splitting methods, in particular those of
\cite{Bric18,Jmaa20,Sadd22,Cham11,Siop13,MaPr18,Opti14,Cond13,%
Davi17,Gise21,Ragu19,Spin83,Tsen91,Tsen00,Bang13}. Stochasticity
can be introduced in various components of these deterministic
algorithms: stochastic approximation of operators, random selection
of coordinates and operators in block-iterative implementations,
and random relaxation parameters. To design and analyze such
stochastic variants of existing models, we propose to transform
Algorithm~\ref{algo:19} into the following abstract stochastic
scheme.

\begin{algorithm}
\label{algo:20}
In the setting of Problem~\ref{prob:19}, let
$\uprho\in\left[2,\pinf\right[$, 
let $x_{\mathsf{0}}\in L^2(\upOmega,\FE,\PP;\HS)$, and iterate
\begin{equation}
\label{e:a20}
\begin{array}{l}
\text{for}\;\nS=0,1,\ldots\\
\left\lfloor
\begin{array}{l}
\XX_{\nS}=\upsigma(x_{\mathsf{0}},\dots,x_{\nS})\\
\text{take}\;
\{w_{\nS},w_{\nS}^*,e_{\nS},e_{\nS}^*\}\subset 
L^2(\upOmega,\FE,\PP;\HS)\;\text{such that}\;
\brk1{w_{\nS}+e_{\nS},w_{\nS}^*+e_{\nS}^*}\in\gra\WS\;\Pas\\
\text{take}\;
\{q_{\nS},c_{\nS}^*,f_{\nS}^*\}\subset L^2(\upOmega,\FE,\PP;\HS)\;
\text{such that}\;
c_{\nS}^*+f_{\nS}^*=\CS q_{\nS}\;\Pas\\
t_{\nS}^*=w_{\nS}^*+c_{\nS}^*\\
\Delta_{\nS}=\scal{x_{\nS}-w_{\nS}}{t_{\nS}^*}
-(4\upalpha)^{-1}\norm{w_{\nS}-q_{\nS}}^2\\[2mm]
\theta_{\nS}=\dfrac{\mathsf{1}_{[t_{\nS}^*\neq0]}
\mathsf{1}_{\left[\Delta_{\nS}>0\right]}\Delta_{\nS}}
{\norm{t_{\nS}^*}^2+\mathsf{1}_{[t_{\nS}^*=0]}}\\
d_{\nS}=\theta_{\nS}t_{\nS}^*\\
\text{take}\;\lambda_{\nS}\in 
L^\infty(\upOmega,\FE,\PP;\left]0,\uprho\right])\\
x_{\nS+1}=x_{\nS}-\lambda_{\nS}d_{\nS}.
\end{array}
\right.\\
\end{array}
\end{equation}
\end{algorithm}

At iteration $\nS$ of Algorithm~\ref{algo:20}, the variables
$e_{\nS}$, $e_{\nS}^*$, and $f_{\nS}^*$ model stochastic errors
allowed in the activation of the operators $\WS$ and $\CS$. Thus,
the algorithm does not require an exact point in the graph of $\WS$
but merely a stochastic approximation $(w_{\nS},w_{\nS}^*)$ of such
a point. Likewise, it does not require the exact evaluation of $\CS
q_{\nS}$ but merely a stochastic approximation $c_{\nS}^*$ of it.
The broad reach of this algorithmic template stems from the
flexibility it offers in choosing the triple
$(w_{\nS},w_{\nS}^*,q_{\nS})$. Another notable new feature of
\eqref{e:a20} is the use of a random relaxation parameter
$\lambda_{\nS}$ which, furthermore, is not restricted to the usual
interval $\left]0,2\right[$. 

Notation and preliminary results are presented in
Section~\ref{sec:2}. The asymptotic behavior of
Algorithm~\ref{algo:20} is analyzed in Section~\ref{sec:3}, where
we prove in particular weak almost sure convergence to a solution
to Problem~\ref{prob:19} under suitable assumptions. Just as the
convergence analysis of Algorithm~\ref{algo:19} provided a unifying
framework to establish that of a wide array of classical and
block-iterative methods in \cite{Acnu24}, those of
Section~\ref{sec:3} can be used to derive stochastic versions of
these methods. Thus, in Section~\ref{sec:4}, we establish the
almost-sure and $L^2$ weak convergence of the proximal point
algorithm with stochastic approximations of the resolvents and
random relaxations. To further illustrate the versatility of
Algorithm~\ref{algo:20}, we consider in Section~\ref{sec:5} a
drastically different model, namely, a highly structured
multivariate monotone inclusion problem involving a mix of
set-valued, cocoercive, and Lipschitzian monotone operators, as
well as linear operators, and various monotonicity-preserving
operations among them. We design a stochastic version of the
deterministic saddle projective splitting algorithm of
\cite{Sadd22} in which the blocks of variables and operators are
now selected randomly over the course of the iterations, and 
the relaxations are random. Theorem~\ref{t:1} establishes for the
first time the almost sure convergence of such a block-iterative
algorithm. Likewise, Section~\ref{sec:6} proposes a randomized
version of the Kuhn--Tucker projective splitting method of
\cite{MaPr18} and analyzes its convergence as an instance of
Algorithm~\ref{algo:20}.

\section{Notation and preliminary results}
\label{sec:2}

\subsection{General notation}
\label{sec:21}
We use sans-serif letters to denote deterministic variables and
italicized serif letters to denote random variables. $\HS$ is a
separable real Hilbert space, with 
identity operator $\Id$, power set $2^{\HS}$,
scalar product $\scal{\cdot}{\cdot}$, and associated norm 
$\|\cdot\|$. The strong and weak convergence in $\HS$ are denoted
by the symbols $\to$ and $\weakly$, respectively. The sets of
strong and weak sequential cluster points of a sequence
$(\xS_{\nS})_{\nnn}$ in $\HS$ are denoted by
$\SC(\xS_{\nS})_{\nnn}$ and $\WC(\xS_{\nS})_{\nnn}$, respectively.
The reader is referred to \cite{Livre1} for background on convex
analysis and fixed point theory, and to \cite{Ledo91} for
background on probability theory.

\subsection{Operators}
\label{sec:22}
Let $\MS\colon\HS\to2^{\HS}$. The graph of $\MS$ is
$\gra\MS=\menge{(\xS,\xS^*)\in\HS\times\HS}{\xS^*\in\MS\xS}$ and
the set of zeros of $\MS$ is
$\zer\MS=\menge{\xS\in\HS}{\mathsf{0}\in\MS\xS}$. The inverse of
$\MS$ is the operator $\MS^{-1}\colon\HS\to2^{\HS}$ with graph
$\gra\MS^{-1}=\menge{(\xS^*,\xS)\in\HS\times\HS}{\xS^*\in\MS\xS}$
and the resolvent of $\MS$ is $\mathsf{J}_{\MS}=(\Id+\MS)^{-1}$.
We say that $\MS$ is monotone if
\begin{equation}
\brk1{\forall(\xS,\xS^*)\in\gra\MS}
\brk1{\forall(\yS,\yS^*)\in\gra\MS}\quad
\scal{\xS-\yS}{\xS^*-\yS^*}\geq 0,
\end{equation}
and that it is maximally monotone if
\begin{equation}
\brk1{\forall(\xS,\xS^*)\in\HS\times\HS}\quad
\brk[s]1{(\xS,\xS^*)\in\gra\MS\Leftrightarrow
\brk1{\forall(\yS,\yS^*)\in\gra\MS}\;\;
\scal{\xS-\yS}{\xS^*-\yS^*}\geq 0}.
\end{equation}
If $\MS$ is maximally monotone, then $\mathsf{J}_{\MS}$ is a
single-valued operator defined on $\HS$ and which satisfies
\begin{equation}
\label{e:s6}
\Fix\mathsf{J}_{\MS}=\zer\MS\quad\text{and}\quad
(\forall\xS\in\HS)(\forall\yS\in\HS)\quad
\|\mathsf{J}_{\MS}\xS-\mathsf{J}_{\MS}\yS\|^2+
\|(\Id-\mathsf{J}_{\MS})\xS-(\Id-\mathsf{J}_{\MS})\yS\|^2\leq
\|\xS-\yS\|^2.
\end{equation}
Let $\upbeta\in\RPP$. Then $\MS$ is $\upbeta$-strongly monotone if
$\MS-\upbeta\Id$ is monotone, i.e.,
\begin{equation}
\brk1{\forall(\xS,\xS^*)\in\gra\MS}
\brk1{\forall(\yS,\yS^*)\in\gra\MS}\quad
\scal{\xS-\yS}{\xS^*-\yS^*}\geq\upbeta\norm{\xS-\yS}^2.
\end{equation}
The parallel sum of
$\BS\colon\HS\to2^{\HS}$ and $\DS\colon\HS\to2^{\HS}$ is
$\BS\infconv\DS=(\BS^{-1}+\DS^{-1})^{-1}$. An operator
$\CS\colon\HS\to\HS$ is cocoercive with constant $\upalpha\in\RPP$
if 
\begin{equation}
(\forall\xS\in\HS)(\forall\yS\in\HS)\quad
\scal{\xS-\yS}{\CS\xS-\CS\yS}\geq\upalpha\norm{\CS\xS-\CS\yS}^2.
\end{equation}
We denote by $\upGamma_0(\HS)$ the class of lower semicontinuous
convex functions $\mathsf{f}\colon\HS\to\RX$ such that 
$\dom\mathsf{f}=\menge{\xS\in\HS}{\mathsf{f}(\xS)<\pinf}\neq\emp$.
The subdifferential of
$\mathsf{f}\in\upGamma_0(\HS)$ is the maximally monotone operator
$\partial\mathsf{f}\colon\HS\to 2^{\HS}\colon
\xS\mapsto\menge{\xS^*\in\HS}{(\forall\yS\in\HS)\;
\scal{\yS-\xS}{\xS^*}+\mathsf{f}(\xS)\leq\mathsf{f}(\yS)}$ and the
proximity operator of $\mathsf{f}$ is 
\begin{equation}
\prox_{\mathsf{f}}=\mathsf{J}_{\partial\mathsf{f}}\colon\HS\to\HS
\colon\xS\mapsto\argmin_{\zS\in\HS}\brk3{\mathsf{f}(\zS)
+\dfrac{1}{2}\norm{\xS-\zS}^2}. 
\end{equation}
The infimal convolution of $\mathsf{f}$ and
$\mathsf{h}\in\upGamma_0(\HS)$ is
$\mathsf{f}\infconv\mathsf{h}\colon\HS\to\RXX\colon 
\xS\mapsto\inf_{\yS\in\HS}(\mathsf{f}(\yS)+\mathsf{h}(\xS-\yS))$.

\subsection{Probabilistic setting}
\label{sec:23}
The underlying probability space $(\upOmega,\FE,\PP)$ is complete. 
Let $(\upXi,\EuScript{G})$ be a measurable space. A $\upXi$-valued
random variable (random variable for short) is a measurable mapping
$x\colon(\upOmega,\FE,\PP)\to(\upXi,\EuScript{G})$. In particular, 
an $\HS$-valued random variable is a measurable 
mapping $x\colon(\upOmega,\FE,\PP)\to(\HS,\BE_{\HS})$, where 
$\BE_{\HS}$ denotes the Borel $\upsigma$-algebra of $\HS$.
Given $x\colon\upOmega\to\upXi$ and $\mathsf{S}\in\EuScript{G}$, we
set $[x\in\mathsf{S}]=\menge{\upomega\in\upOmega}
{x(\upomega)\in\mathsf{S}}$.
Let $\mathsf{p}\in\left[1,\pinf\right[$ and let $\XX$ be a
sub $\upsigma$-algebra of $\FE$. Then
$L^\mathsf{p}(\upOmega,\XX,\PP;\HS)$ denotes the space of 
equivalence classes of $\Pas$ equal $\HS$-valued random variables
$x\colon(\upOmega,\XX,\PP)\to(\HS,\BE_{\HS})$ such that 
$\EE\|x\|^\mathsf{p}<\pinf$. Endowed with the norm 
\begin{equation}
\|\cdot\|_{L^\mathsf{p}(\upOmega,\XX,\PP;\HS)}\colon
x\mapsto\EE^{1/\mathsf{p}}\|x\|^\mathsf{p}
=\brk3{\int_{\upOmega}\|x(\upomega)\|^{\mathsf{p}}
\PP(d\upomega)}^{1/\mathsf{p}},
\end{equation}
$L^\mathsf{p}(\upOmega,\XX,\PP;\HS)$ is a
real Banach space. Further,
\begin{equation}
(\forall\mathsf{S}\in\BE_{\HS})\quad
L^\mathsf{p}(\upOmega,\XX,\PP;\mathsf{S})=\Menge1{x\in
L^\mathsf{p}(\upOmega,\XX,\PP;\HS)}{x\in\mathsf{S}\:\;\Pas}.
\end{equation}
The $\upsigma$-algebra generated by a family $\upPhi$ of random 
variables is denoted by $\upsigma(\upPhi)$. Let $(x_{\nS})_{\nnn}$ 
and $x$ be $\HS$-valued random variables. 
We say that $(x_{\nS})_{\nnn}$ converges in probability to
$x$, denoted by $x_{\nS}\Pto x$, 
if $\norm{x_{\nS}-x}$ converges in probability to $0$, i.e.,
\begin{equation}
(\forall\upvarepsilon\in\RPP)\quad
\PP\brk2{\brk[s]1{\norm{x_{\nS}-x}>\upvarepsilon}}\to 0.
\end{equation}
We say $\varphi\colon\upOmega\times\HS\to\RR$ is a Carath\'eodory
integrand if
\begin{equation}
\begin{cases}
\text{for}\;\PP\text{-almost every}\;\upomega\in\upOmega,
\;\varphi(\upomega,\cdot)\;\text{is continuous};\\
\text{for every}\;\xS\in\HS,\;\varphi(\cdot,\xS)\;
\text{is}\;\FE\text{-measurable.}
\end{cases}
\end{equation}
We denote by $\mathfrak{C}(\upOmega,\FE,\PP;\HS)$ the class of 
Carath\'eodory integrands
$\varphi\colon\upOmega\times\HS\to\RP$ such that
\begin{equation}
\label{e:c}
\brk1{\forall x\in L^2(\upOmega,\FE,\PP;\HS)}\quad
\int_{\upOmega}
\varphi\brk1{\upomega,x(\upomega)}\PP(d\upomega)<\pinf.
\end{equation}
Given $\varphi\in\mathfrak{C}(\upOmega,\FE,\PP;\HS)$ and 
$x\in L^2(\upOmega,\FE,\PP;\HS)$, we set
$\varphi(\cdot,x)\colon\upomega\mapsto
\varphi(\upomega,x(\upomega))$.

\subsection{Preliminary results}
\label{sec:24}

Our main results rest on several technical facts, which are
presented below. The first two lemmas are direct consequences of
the corresponding statements for $\RR$-valued random variables; see
\cite[Section~2.10]{Shir16}.

\begin{lemma}
\label{l:1}
Let $(x_{\nS})_{\nnn}$ and $x$ be $\HS$-valued random variables and
let $\mathsf{p}\in\left[1,\pinf\right[$ be such that 
$(x_{\nS})_{\nnn}$ converges strongly in 
$L^{\mathsf{p}}(\upOmega,\FE,\PP;\HS)$ to $x$.
Then $x_{\nS}\Pto x$.
\end{lemma}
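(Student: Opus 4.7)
The plan is to reduce the Hilbert-valued statement to the scalar case cited from \cite[Section~2.10]{Shir16}. The key observation is that the definition of $x_{\nS}\Pto x$ is formulated in terms of the real-valued random variables $\norm{x_{\nS}-x}$, and the hypothesis $\EE\norm{x_{\nS}-x}^{\mathsf{p}}\to 0$ is exactly the statement that these real-valued random variables converge to $0$ in $L^{\mathsf{p}}(\upOmega,\FE,\PP;\RR)$.

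First I would observe that, by the triangle inequality in $L^{\mathsf{p}}$, the assumption $\norm{x_{\nS}-x}_{L^{\mathsf{p}}(\upOmega,\FE,\PP;\HS)}\to 0$ translates into $\EE\norm{x_{\nS}-x}^{\mathsf{p}}\to 0$. Next, I would fix $\upvarepsilon\in\RPP$ and invoke Markov's inequality applied to the nonnegative real random variable $\norm{x_{\nS}-x}^{\mathsf{p}}$:
\begin{equation}
\PP\brk2{\brk[s]1{\norm{x_{\nS}-x}>\upvarepsilon}}
=\PP\brk2{\brk[s]1{\norm{x_{\nS}-x}^{\mathsf{p}}>
\upvarepsilon^{\mathsf{p}}}}\leq
\frac{\EE\norm{x_{\nS}-x}^{\mathsf{p}}}{\upvarepsilon^{\mathsf{p}}}.
\end{equation}
Letting $\nS\to\pinf$ yields $\PP([\norm{x_{\nS}-x}>\upvarepsilon])\to 0$, which is the definition of $x_{\nS}\Pto x$.

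There is essentially no obstacle here: the separability of $\HS$ ensures that $\norm{x_{\nS}-x}$ is a bona fide real-valued random variable (the norm is continuous, hence Borel measurable, on $\HS$), and the remainder is a one-line application of Markov's inequality. Alternatively, one could simply cite the scalar version of the result from \cite[Section~2.10]{Shir16} applied to the sequence $(\norm{x_{\nS}-x})_{\nnn}$ in $L^{\mathsf{p}}(\upOmega,\FE,\PP;\RR)$, which is precisely the route indicated in the preamble to the lemma.
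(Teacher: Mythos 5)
Your proof is correct and follows the same route the paper intends: the paper gives no separate argument for this lemma, merely noting that it is a direct consequence of the scalar case from \cite[Section~2.10]{Shir16} applied to $\norm{x_{\nS}-x}$, which is exactly your reduction, and your Markov-inequality computation is the standard proof of that scalar fact.
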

\newpage
\begin{lemma}
\label{l:2}
Let $(x_{\nS})_{\nnn}$ and $x$ be $\HS$-valued random variables
such that $x_{\nS}\Pto x$. Then there exists a strictly 
increasing sequence $(\jS_{\nS})_{\nnn}$ in $\NN$ such that 
$(x_{\jS_{\nS}})_{\nnn}$ converges strongly $\Pas$ to $x$. 
\end{lemma}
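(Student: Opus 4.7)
The plan is to reduce the Hilbert-space statement to the classical result for $\RR$-valued random variables, as the paragraph preceding the two lemmas explicitly invites. The key observation is that convergence in probability in $\HS$, as defined in Section~\ref{sec:23}, is phrased entirely in terms of the real-valued random variables $\|x_\nS-x\|$.

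First I would set $y_\nS=\|x_\nS-x\|$ for every $\nnn$. Since $x_\nS$ and $x$ are $\HS$-valued random variables and the norm $\|\cdot\|\colon\HS\to\RP$ is continuous, each $y_\nS$ is a bona fide $\RR$-valued random variable on $(\upOmega,\FE,\PP)$. By the definition of $\Pto$ recalled in Section~\ref{sec:23}, the assumption $x_\nS\Pto x$ is exactly the statement that $y_\nS\to 0$ in probability in $\RR$.

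Next I would invoke the classical subsequence principle for real-valued random variables (Shir16, Section~2.10): whenever a sequence of $\RR$-valued random variables converges to $0$ in probability, some subsequence converges to $0$ $\PP$-almost surely. Applying this to $(y_\nS)_{\nnn}$ yields a strictly increasing sequence $(\jS_\nS)_{\nnn}$ in $\NN$ such that $y_{\jS_\nS}\to 0$ $\PP$-a.s. Unwinding the definition of $y_{\jS_\nS}$ gives $\|x_{\jS_\nS}-x\|\to 0$ $\PP$-a.s., which is precisely the strong $\PP$-a.s. convergence of $(x_{\jS_\nS})_{\nnn}$ to $x$.

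No step presents a real obstacle: the heart of the argument, a Borel--Cantelli extraction along a geometric tolerance schedule such as $\PP([y_{\jS_\nS}>2^{-\nS}])<2^{-\nS}$, is subsumed by the cited real-valued statement. The only point requiring any care is the measurability of $y_\nS$, which follows from the separability of $\HS$ and the continuity of the norm, so the reduction is complete.
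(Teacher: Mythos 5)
Your proposal is correct and takes exactly the route the paper intends: the paper offers no written proof, stating only that Lemmas~\ref{l:1} and \ref{l:2} are direct consequences of the corresponding $\RR$-valued results in \cite[Section~2.10]{Shir16}, and your reduction via $y_{\nS}=\norm{x_{\nS}-x}$ is precisely that intended argument.
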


\begin{lemma}
\label{l:3}
Let $(\xi_{\nS})_{\nnn}$, $(\Delta_{\nS})_{\nnn}$, and
$(\chi_{\nS})_{\nnn}$ be sequences of\, $\RR$-valued random
variables such that
\begin{equation}
\begin{cases}
\varlimsup\Delta_{\nS}\leq0\;\Pas;\\
\chi_{\nS}\Pto 0;\\
(\forall\nnn)\;\;\xi_{\nS}\geq 0\;\Pas\;\;\text{and}\;\;
\xi_{\nS}+\chi_{\nS}\leq\Delta_{\nS}\;\Pas
\end{cases}
\end{equation}
Then $\xi_{\nS}\Pto 0$.
\end{lemma}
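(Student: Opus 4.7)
My starting point is the pointwise inequality built into the hypotheses: since $\xi_{\nS}\ge 0$ and $\xi_{\nS}+\chi_{\nS}\le\Delta_{\nS}$ $\Pas$, we have
\begin{equation*}
0\le\xi_{\nS}\le\Delta_{\nS}-\chi_{\nS}\quad\Pas,
\end{equation*}
so that for every $\upvarepsilon\in\RPP$ a union bound yields
\begin{equation*}
\PP[\xi_{\nS}>\upvarepsilon]\le\PP[\Delta_{\nS}>\upvarepsilon/2]+\PP[-\chi_{\nS}>\upvarepsilon/2].
\end{equation*}
Because $\xi_{\nS}\ge 0$ $\Pas$, it suffices to show that each of the two right-hand terms tends to $0$, since then $\PP[|\xi_{\nS}|>\upvarepsilon]=\PP[\xi_{\nS}>\upvarepsilon]\to 0$, which is exactly $\xi_{\nS}\Pto 0$.

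The second term is bounded above by $\PP[|\chi_{\nS}|>\upvarepsilon/2]\to 0$ by the assumption $\chi_{\nS}\Pto 0$. For the first term, I pass to the running suprema $\Theta_{\nS}=\sup_{\kS\ge\nS}\Delta_{\kS}$, which are $\overline{\RR}$-valued random variables that decrease pointwise to $\varlimsup_{\nS}\Delta_{\nS}$. The events $A_{\nS}=[\Theta_{\nS}>\upvarepsilon/2]$ then form a decreasing sequence, and
\begin{equation*}
\bigcap_{\nnn}A_{\nS}\subset[\varlimsup\Delta_{\nS}\ge\upvarepsilon/2]\subset[\varlimsup\Delta_{\nS}>0],
\end{equation*}
which is $\PP$-null by hypothesis. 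Continuity of $\PP$ from above therefore yields $\PP[A_{\nS}]\downarrow 0$, and since $\PP[\Delta_{\nS}>\upvarepsilon/2]\le\PP[A_{\nS}]$, the first term tends to $0$ as well.

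\textbf{Main obstacle.} The only genuine subtlety is converting the purely pointwise $\Pas$ hypothesis $\varlimsup\Delta_{\nS}\le 0$ into a quantitative bound on each individual tail $\PP[\Delta_{\nS}>\upvarepsilon/2]$, since almost-sure control on the limit superior does not a priori give uniform probabilistic control at finite $\nS$. Inserting the monotone enveloping sequence $(\Theta_{\nS})_{\nnn}$ is what makes continuity of measure from above applicable and does all the work. A stylistically different but equally short route would invoke Lemma~\ref{l:2} together with the subsequence characterization of convergence in probability: from any subsequence of $(\xi_{\nS})_{\nnn}$, Lemma~\ref{l:2} furnishes a further subsequence along which $\chi$ tends $\Pas$ to $0$; the inequality then forces $\varlimsup\xi$ along this sub-subsequence to be $\le 0$ on a full-measure event, whence $\Pas$ convergence to $0$ because $\xi\ge 0$.
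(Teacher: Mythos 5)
Your proposal is correct and follows essentially the same route as the paper: the same union bound $[\xi_{\nS}>\upvarepsilon]\subset[\Delta_{\nS}>\upvarepsilon/2]\cup[-\chi_{\nS}>\upvarepsilon/2]$, with the second term handled by $\chi_{\nS}\Pto 0$ and the first by the almost sure bound on $\varlimsup\Delta_{\nS}$. Your passage to the running suprema $\Theta_{\nS}$ and continuity of $\PP$ from above is exactly the reverse Fatou inequality $\varlimsup\PP([\Delta_{\nS}>\upvarepsilon/2])\leq\PP(\varlimsup\,[\Delta_{\nS}>\upvarepsilon/2])$ that the paper invokes, since your events $A_{\nS}$ coincide with $\bigcup_{\kS\geq\nS}[\Delta_{\kS}>\upvarepsilon/2]$.
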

\begin{proof}
Let $\upvarepsilon\in\RPP$ and $\nnn$. Let $\upomega\in\upOmega$
and suppose that $\xi_{\nS}(\upomega)>\upvarepsilon$. Then there
are two cases:
\begin{itemize}
\item
$\chi_{\nS}(\upomega)<-\upvarepsilon/2$.
\item
$\chi_{\nS}(\upomega)\geq-\upvarepsilon/2$, in which case 
${\upvarepsilon}/{2}=\upvarepsilon-{\upvarepsilon}/{2}
<\xi_{\nS}(\upomega)+\chi_{\nS}(\upomega)
\leq\Delta_{\nS}(\upomega)$. Therefore, 
\begin{equation}
[\xi_{\nS}>\upvarepsilon]\subset[\chi_{\nS}<-\upvarepsilon/2]\cup
[\Delta_{\nS}>\upvarepsilon/2].
\end{equation}
\end{itemize}
Note that $\PP([\chi_{\nS}<-\upvarepsilon/2])\to 0$ since 
$\chi_{\nS}\Pto 0$. On the other hand, since
$\varlimsup\Delta_{\nS}\leq0\;\Pas$, we have
\begin{align}
\varlimsup\PP\brk{[\Delta_{\nS}>\upvarepsilon/2]}
&\leq\PP\brk2{\varlimsup\,[\Delta_{\nS}>\upvarepsilon/2]}\nonumber\\
&=\PP\brk2{\menge{\upomega\in\upOmega}
{(\forall\nnn)(\exi\kS\in\{\nS,\nS+1,\ldots\})\;
\Delta_{\kS}(\upomega)>\upvarepsilon/2}}\nonumber\\
&=0.
\end{align}
Altogether,
$\PP([\abs{\xi_{\nS}}>\upvarepsilon])
=\PP([\xi_{\nS}>\upvarepsilon])
\leq\PP([\chi_{\nS}<-\upvarepsilon/2])
+\PP([\Delta_{\nS}>\upvarepsilon/2])\to0$ and we conclude that
$\xi_{\nS}\Pto 0$.
\end{proof}

\begin{lemma}
\label{l:100}
Let $x\in L^2(\upOmega,\FE,\PP;\HS)$ and let
$\mathsf{T}\colon\HS\to\HS$ be Lipschitzian. Then
$\mathsf{T}x\in L^2(\upOmega,\FE,\PP;\HS)$.  
\end{lemma}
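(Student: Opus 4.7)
The plan is to verify the two conditions that define membership in $L^2(\upOmega,\FE,\PP;\HS)$: measurability of $\mathsf{T}x$ as an $\HS$-valued map, and finiteness of $\EE\|\mathsf{T}x\|^2$.

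For measurability, I would invoke the fact that a Lipschitzian operator $\mathsf{T}\colon\HS\to\HS$ is continuous and hence $(\BE_{\HS},\BE_{\HS})$-measurable. Since $x\colon(\upOmega,\FE,\PP)\to(\HS,\BE_{\HS})$ is measurable by assumption, the composition $\mathsf{T}x=\mathsf{T}\circ x$ is $(\FE,\BE_{\HS})$-measurable.

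For integrability, let $\upkappa\in\RP$ denote a Lipschitz constant of $\mathsf{T}$. Then, for every $\upomega\in\upOmega$, the triangle inequality yields
\begin{equation}
\|\mathsf{T}x(\upomega)\|\leq\|\mathsf{T}x(\upomega)-\mathsf{T}\mathsf{0}\|+\|\mathsf{T}\mathsf{0}\|\leq\upkappa\|x(\upomega)\|+\|\mathsf{T}\mathsf{0}\|,
\end{equation}
from which the elementary bound $(a+b)^2\leq 2a^2+2b^2$ gives $\|\mathsf{T}x(\upomega)\|^2\leq 2\upkappa^2\|x(\upomega)\|^2+2\|\mathsf{T}\mathsf{0}\|^2$. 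Integrating over $\upOmega$ against $\PP$ yields
\begin{equation}
\EE\|\mathsf{T}x\|^2\leq 2\upkappa^2\EE\|x\|^2+2\|\mathsf{T}\mathsf{0}\|^2<\pinf,
\end{equation}
since $x\in L^2(\upOmega,\FE,\PP;\HS)$ ensures $\EE\|x\|^2<\pinf$. Combining the measurability and the moment estimate gives $\mathsf{T}x\in L^2(\upOmega,\FE,\PP;\HS)$.

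No step here is truly an obstacle; the only mild subtlety is handling a possibly nonzero value of $\mathsf{T}\mathsf{0}$, which is why I anchor the Lipschitz estimate at $\mathsf{0}$ rather than attempting to bound $\|\mathsf{T}x\|$ directly by $\upkappa\|x\|$.
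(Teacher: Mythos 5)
Your proof is correct and follows essentially the same route as the paper's: continuity of the Lipschitzian $\mathsf{T}$ gives measurability of the composition, and the moment bound is obtained by anchoring the Lipschitz estimate at $\mathsf{0}$ together with the elementary inequality $(a+b)^2\leq 2a^2+2b^2$. (If anything, your version is slightly more careful with the constant, which should appear squared.)
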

\begin{proof}
Let $\upbeta\in\RPP$ be the Lipschitz constant of $\mathsf{T}$.
Since $\mathsf{T}$ is continuous, the mapping
$\upomega\mapsto(\mathsf{T}\circ x)(\upomega)
=\mathsf{T}x(\upomega)$ is measurable. Furthermore,
\begin{equation}
\frac{1}{2}\EE\norm{\mathsf{T}x}^2
\leq\EE\norm{\mathsf{T}x-\mathsf{T}\mathsf{0}}^2
+\EE\norm{\mathsf{T}\mathsf{0}}^2
\leq\upbeta\EE\norm{x-\mathsf{0}}^2
+\EE\norm{\mathsf{T}\mathsf{0}}^2
=\upbeta\EE\norm{x}^2
+\norm{\mathsf{T}\mathsf{0}}^2<\pinf,
\end{equation}
which confirms that $\mathsf{T}x\in L^2(\upOmega,\FE,\PP;\HS)$.
\end{proof}

\begin{lemma}
\label{l:101}
Let $(x_{\nS})_{\nnn}$ be a sequence in
$L^2(\upOmega,\FE,\PP;\HS)$, let
$\mathsf{m}\in\NN$, and let $\vartheta(\mathsf{m})$ be a
$\{0,\ldots,\mathsf{m}\}$-valued random variable. Then the
function $x_{\vartheta(\mathsf{m})}^{}\colon\upomega\mapsto 
x_{\vartheta(\mathsf{m})(\upomega)}^{}(\upomega)$ is in
$L^2(\upOmega,\FE,\PP;\HS)$.
\end{lemma}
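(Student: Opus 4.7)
My plan is to decompose the random variable $x_{\vartheta(\mathsf{m})}$ as a finite sum indexed by the possible values of $\vartheta(\mathsf{m})$, from which both measurability and square-integrability will follow in one shot. Specifically, I will use the identity
\begin{equation}
x_{\vartheta(\mathsf{m})}^{}=\sum_{\kS=0}^{\mathsf{m}}
\mathsf{1}_{[\vartheta(\mathsf{m})=\kS]}^{}\,x_{\kS},
\end{equation}
which is valid pointwise on $\upOmega$ since the sets $[\vartheta(\mathsf{m})=\kS]$ for $\kS\in\{0,\ldots,\mathsf{m}\}$ partition $\upOmega$.

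First I would address measurability. Since $\vartheta(\mathsf{m})$ takes values in $\{0,\ldots,\mathsf{m}\}$, each set $[\vartheta(\mathsf{m})=\kS]$ belongs to $\FE$, so each indicator $\mathsf{1}_{[\vartheta(\mathsf{m})=\kS]}$ is $\FE$-measurable. The product $\mathsf{1}_{[\vartheta(\mathsf{m})=\kS]}x_{\kS}$ is then an $\HS$-valued $\FE$-measurable function, and a finite sum of such functions is $\FE$-measurable. Hence $x_{\vartheta(\mathsf{m})}$ is an $\HS$-valued random variable.

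Next I would establish that $\EE\|x_{\vartheta(\mathsf{m})}\|^2<\pinf$. Because the indicators $\mathsf{1}_{[\vartheta(\mathsf{m})=\kS]}$ have pairwise disjoint supports, the pointwise identity
\begin{equation}
\|x_{\vartheta(\mathsf{m})}\|^2
=\sum_{\kS=0}^{\mathsf{m}}
\mathsf{1}_{[\vartheta(\mathsf{m})=\kS]}^{}\|x_{\kS}\|^2
\end{equation}
holds, and taking expectations and bounding each indicator by $1$ yields
\begin{equation}
\EE\|x_{\vartheta(\mathsf{m})}\|^2
\leq\sum_{\kS=0}^{\mathsf{m}}\EE\|x_{\kS}\|^2<\pinf,
\end{equation}
since the sum is finite and each $x_{\kS}\in L^2(\upOmega,\FE,\PP;\HS)$. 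Combining measurability with this bound gives $x_{\vartheta(\mathsf{m})}\in L^2(\upOmega,\FE,\PP;\HS)$.

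There is no real obstacle here; the only subtlety is making sure the pointwise decomposition is used correctly (which relies crucially on $\vartheta(\mathsf{m})$ taking only finitely many values, so that no convergence issues arise). If $\vartheta$ were $\NN$-valued, one would need additional hypotheses to justify interchanging sums and integrals, but the finiteness of the range here trivializes that concern.
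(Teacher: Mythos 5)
Your proof is correct and follows essentially the same route as the paper: the same finite decomposition $x_{\vartheta(\mathsf{m})}=\sum_{\kS=0}^{\mathsf{m}}\mathsf{1}_{[\vartheta(\mathsf{m})=\kS]}x_{\kS}$ yields measurability and the $L^2$ bound (the paper bounds the second moment by $\mathsf{m}\max_{\jS}\EE\|x_{\jS}\|^2$ rather than your sum, which is an immaterial difference). The only nuance you gloss over is that the $x_{\kS}$ are equivalence classes, so the decomposition holds only $\Pas$ and the paper invokes completeness of the probability space to conclude measurability; this is a cosmetic point, not a gap.
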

\begin{proof}
We note that 
\begin{equation}
x_{\vartheta(\mathsf{m})}^{}
=\sum_{\jS=0}^{\mathsf{m}}
\mathsf{1}_{[\vartheta(\mathsf{m})=\jS]}x_{\jS}\;\;\Pas,
\end{equation}
which shows that $x_{\vartheta(\mathsf{m})}^{}$ is measurable, as
$(\upOmega,\FE,\PP)$ is complete, and that
\begin{equation}
\EE\norm{x_{\vartheta(\mathsf{m})}^{}}^2
\leq\mathsf{m}\max_{1\leq\jS\leq\mathsf{m}}\EE\norm{x_{\jS}}^2
<\pinf.
\end{equation}
Thus, $x_{\vartheta(\mathsf{m})}^{}\in L^2(\upOmega,\FE,\PP;\HS)$. 
\end{proof}

The following theorem is a straightforward consequence of 
\cite[Theorems~3.2 and 3.6]{Moco25}.

\begin{theorem}
\label{t:moc}
Let $\ZS$ be a nonempty closed convex subset of $\HS$,
let $x_{\mathsf{0}}\in L^2(\upOmega,\FE,\PP;\HS)$, and let
$\uprho\in\left[2,\pinf\right[$. 
Iterate
\begin{equation}
\label{e:a2}
\begin{array}{l}
\textup{for}\;\nS=0,1,\ldots\\
\left\lfloor
\begin{array}{l}
\XX_{\nS}=\upsigma(x_{\mathsf{0}},\dots,x_{\nS})\\
t^*_{\nS}\in L^2(\upOmega,\FE,\PP;\HS)\;\textup{and}\; 
\eta_{\nS}\in{L^1(\upOmega,\FE,\PP;\RR)}\;\textup{satisfy}\\[1mm]
\qquad\begin{cases}
\dfrac{\mathsf{1}_{[t_{\nS}^*\neq0]}\mathsf{1}_{
\left[\scal{x_{\nS}}{t_{\nS}^*}>\eta_{\nS}\right]}\eta_{\nS}}
{\|t_{\nS}^*\|+\mathsf{1}_{[t_{\nS}^*=0]}}\in
L^2(\upOmega,\FE,\PP;\RR);\\[3mm]
\theta_{\nS}=
\dfrac{\mathsf{1}_{[t_{\nS}^*\neq0]}\mathsf{1}_{
\left[\scal{x_{\nS}}{t_{\nS}^*}>\eta_{\nS}\right]}
\bigl(\scal{x_{\nS}}{t_{\nS}^*}-\eta_{\nS}\bigr)}
{\|t_{\nS}^*\|^2+\mathsf{1}_{[t_{\nS}^*=0]}};\\
(\forall\mathsf{z}\in\ZS)\;\;
\scal{\mathsf{z}}{\EC{\theta_{\nS}t^*_{\nS}}{\XX_{\nS}}}\leq
\EC{\theta_{\nS}\eta_{\nS}}{\XX_{\nS}}+
\varepsilon_{\nS}(\cdot,\mathsf{z})\;\Pas,\\
\hspace{50mm}\textup{where}\;\varepsilon_{\nS}\in 
\mathfrak{C}(\upOmega,\FE,\PP;\HS)
\end{cases}\\
d_{\nS}=\theta_{\nS}t_{\nS}^*\\
\lambda_{\nS}\in L^\infty(\upOmega,\FE,\PP;\left]0,\uprho\right])\\
x_{\nS+1}=x_{\nS}-\lambda_{\nS} d_{\nS}.
\end{array}
\right.\\
\end{array}
\end{equation}
Suppose that, for every $\nnn$, $\lambda_{\nS}$ is independent of 
$\upsigma(\{x_{\mathsf{0}},\ldots,x_{\nS},d_{\nS}\})$, and
$\EE(\lambda_{\nS}(2-\lambda_{\nS}))\geq 0$.
Then the following hold:
\begin{enumerate}
\item
\label{t:2i-}
$(x_{\nS})_{\nnn}$ is a well-defined sequence in
$L^2(\upOmega,\FE,\PP;\HS)$.
\item
Suppose that, for every $\zS\in\ZS$, $\sum_{\nnn}
\EE\varepsilon_{\nS}(\cdot,\zS)\EE\lambda_{\nS}<\pinf$.
Then the following are satisfied:
\begin{enumerate}
\item
\label{t:2ii}
$(\norm{x_{\nS}})_{\nnn}$ is bounded $\Pas$ and
$(\EE\norm{x_{\nS}}^2)_{\nnn}$ is bounded.
\item
\label{t:2viid}
$\sum_{\nnn}\EE(\lambda_{\nS}(2-\lambda_{\nS}))
\EE{\norm{d_{\nS}}^2}<\pinf$.
\item
\label{t:2viie}
Suppose that $\inf_{\nnn}\EE(\lambda_{\nS}(2-\lambda_{\nS}))>0$.
Then $\sum_{\nnn}\EE{\|x_{\nS+1}-x_{\nS}\|^2}<\pinf$.
\item
\label{t:2viif}
Suppose that $\mathfrak{W}(x_{\nS})_{\nnn}\subset\ZS\;\Pas$ 
Then $(x_{\nS})_{\nnn}$ converges weakly $\Pas$ and weakly in
$L^2(\upOmega,\FE,\PP;\HS)$ to a random variable
$x\in L^2(\upOmega,\FE,\PP;\ZS)$.
\item
\label{t:2viig}
Suppose that $\mathfrak{S}(x_{\nS})_{\nnn}\cap\ZS\neq\emp\;\Pas$ 
Then $(x_{\nS})_{\nnn}$ converges strongly $\Pas$ and strongly in
$L^1(\upOmega,\FE,\PP;\HS)$ to a random variable 
$x\in L^2(\upOmega,\FE,\PP;\ZS)$. Additionally, 
$(x_{\nS})_{\nnn}$ converges weakly in $L^2(\upOmega,\FE,\PP;\HS)$
to $x$.
\end{enumerate}
\end{enumerate}
\end{theorem}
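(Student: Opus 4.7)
The plan is to exhibit \eqref{e:a2} as a specific instance of the abstract stochastic quasi-Fej\'er projection scheme of \cite{Moco25}, so that \cite[Theorems~3.2 and 3.6]{Moco25} apply and yield every conclusion. The cornerstone of this identification is the elementary identity
\[
\scal{x_{\nS}}{d_{\nS}} = \theta_{\nS}\eta_{\nS} + \|d_{\nS}\|^2,
\]
valid $\PP$-a.s., which is immediate by examining the event on which $\theta_{\nS}>0$ (there $\theta_{\nS}\|t_{\nS}^*\|^2=\scal{x_{\nS}}{t_{\nS}^*}-\eta_{\nS}$) and its complement (where $\theta_{\nS}=0$ and $d_{\nS}=\mathsf{0}$). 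Geometrically, $x_{\nS}-\theta_{\nS}t_{\nS}^*$ is the projection of $x_{\nS}$ onto the half-space $\{\mathsf{z}\in\HS:\scal{\mathsf{z}}{t_{\nS}^*}\leq\eta_{\nS}\}$, which, by the displayed conditional-expectation hypothesis, is an outer approximation of $\ZS$ on average.

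First I would settle item \ref{t:2i-} by induction on $\nS$. Given $x_{\nS}\in L^2(\upOmega,\FE,\PP;\HS)$, Cauchy--Schwarz gives $\mathsf{1}_{[t_{\nS}^*\neq 0]}|\scal{x_{\nS}}{t_{\nS}^*}|/\|t_{\nS}^*\|\leq\|x_{\nS}\|$; combined with the first integrability hypothesis of \eqref{e:a2}, this yields $\theta_{\nS}\|t_{\nS}^*\|\in L^2(\upOmega,\FE,\PP;\RR)$. Hence $d_{\nS}\in L^2(\upOmega,\FE,\PP;\HS)$, and since $\lambda_{\nS}\in L^{\infty}$, also $x_{\nS+1}\in L^2(\upOmega,\FE,\PP;\HS)$. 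Next, expanding $\|x_{\nS+1}-\mathsf{z}\|^2$ via $x_{\nS+1}=x_{\nS}-\lambda_{\nS}d_{\nS}$ and invoking the identity above yields
\[
\|x_{\nS+1}-\mathsf{z}\|^2 = \|x_{\nS}-\mathsf{z}\|^2 - \lambda_{\nS}(2-\lambda_{\nS})\|d_{\nS}\|^2 + 2\lambda_{\nS}\brk1{\scal{\mathsf{z}}{d_{\nS}}-\theta_{\nS}\eta_{\nS}}.
\]
Taking conditional expectation with respect to $\XX_{\nS}$, exploiting the independence of $\lambda_{\nS}$ from $\upsigma(\{x_{\mathsf{0}},\ldots,x_{\nS},d_{\nS}\})$, and invoking the displayed stochastic inequality produces the quasi-Fej\'er estimate
\[
\EC{\|x_{\nS+1}-\mathsf{z}\|^2}{\XX_{\nS}} \leq \|x_{\nS}-\mathsf{z}\|^2 - \EE(\lambda_{\nS}(2-\lambda_{\nS}))\EC{\|d_{\nS}\|^2}{\XX_{\nS}} + 2\EE\lambda_{\nS}\,\varepsilon_{\nS}(\cdot,\mathsf{z})
\]
for every $\mathsf{z}\in\ZS$.

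This is precisely the abstract inequality underlying \cite[Theorems~3.2 and 3.6]{Moco25}, with $\sum_{\nnn}\EE\varepsilon_{\nS}(\cdot,\zS)\EE\lambda_{\nS}<\pinf$ playing the role of the error-summability condition there. Items \ref{t:2ii}--\ref{t:2viie} then follow from the Robbins--Siegmund-type analysis of \cite[Theorem~3.2]{Moco25}, while \ref{t:2viif}--\ref{t:2viig} follow from the stochastic Opial-type convergence statement of \cite[Theorem~3.6]{Moco25}. The main obstacle is purely notational: matching \eqref{e:a2} term by term with the abstract framework of \cite{Moco25}, and verifying that the Carath\'eodory regularity built into $\varepsilon_{\nS}$ supplies exactly the measurability needed to support the separable cluster-point arguments underlying Theorem~3.6.
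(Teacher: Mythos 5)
Your proposal takes exactly the paper's route: the paper offers no proof beyond declaring the theorem a straightforward consequence of \cite[Theorems~3.2 and 3.6]{Moco25}, and your identification of \eqref{e:a2} with that framework—via the correct identity $\scal{x_{\nS}}{d_{\nS}}=\theta_{\nS}\eta_{\nS}+\norm{d_{\nS}}^2$, the resulting quasi-Fej\'er recursion, and the observation that $\theta_{\nS}\eta_{\nS}$ is $\upsigma(\{x_{\mathsf{0}},\ldots,x_{\nS},d_{\nS}\})$-measurable so the independence of $\lambda_{\nS}$ can be exploited—is precisely the reduction the authors leave implicit. Your sketch is correct and in fact supplies more detail than the paper does.
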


\begin{lemma}[\protect{\cite[Lemma~A.2]{Sadd22}}]
\label{l:A2}
Let $\upalpha\in\RP$, let $\AS\colon\HS\to\HS$ be
$\upalpha$-Lipschitzian, let $\upsigma\in\RPP$, and let 
$\upgamma\in\left]0,1/(\upalpha+\upsigma)\right]$.
Then $\upgamma^{-1}\Id-\AS$ is $\upsigma$-strongly monotone.
\end{lemma}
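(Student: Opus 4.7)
The plan is to unpack the definition of $\upsigma$-strong monotonicity and reduce the claim to a direct chain of inequalities using Cauchy--Schwarz and the Lipschitz bound on $\AS$. Since $\upgamma^{-1}\Id - \AS$ is single-valued on $\HS$, its graph consists of pairs $(\xS, \upgamma^{-1}\xS - \AS\xS)$, so I need only verify that for all $\xS, \yS \in \HS$,
\begin{equation}
\bigl\langle \xS - \yS,\, (\upgamma^{-1}\Id - \AS)\xS - (\upgamma^{-1}\Id - \AS)\yS \bigr\rangle \geq \upsigma \|\xS - \yS\|^2.
\end{equation}

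First I would expand the left-hand side by linearity to obtain $\upgamma^{-1}\|\xS-\yS\|^2 - \langle \xS-\yS, \AS\xS - \AS\yS\rangle$. Then I would apply Cauchy--Schwarz followed by the $\upalpha$-Lipschitz property of $\AS$ to bound the cross term from above by $\upalpha\|\xS-\yS\|^2$. This produces the lower bound $(\upgamma^{-1} - \upalpha)\|\xS-\yS\|^2$.

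Finally, I would invoke the hypothesis $\upgamma \le 1/(\upalpha+\upsigma)$, which yields $\upgamma^{-1} \ge \upalpha + \upsigma$, and therefore $\upgamma^{-1} - \upalpha \ge \upsigma$. Substituting gives the desired inequality.

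There is no real obstacle here; the only subtle point is that the hypothesis allows $\upalpha = 0$, in which case $\AS$ is a constant operator and the estimate $\langle \xS-\yS, \AS\xS - \AS\yS\rangle = 0 \le \upalpha\|\xS-\yS\|^2$ still holds trivially, so the argument goes through uniformly for $\upalpha \in \RP$.
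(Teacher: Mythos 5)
Your proof is correct and is the standard argument: the paper itself gives no proof, citing \cite[Lemma~A.2]{Sadd22}, and your chain (expand the inner product, bound the cross term by Cauchy--Schwarz and the $\upalpha$-Lipschitz property, then use $\upgamma^{-1}\geq\upalpha+\upsigma$) is exactly the expected one. The remark about the degenerate case $\upalpha=0$ is a nice touch but not strictly needed, since the bound $\scal{\xS-\yS}{\AS\xS-\AS\yS}\leq\upalpha\norm{\xS-\yS}^2$ holds uniformly for $\upalpha\in\RP$ by the same two-step estimate.
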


\section{Convergence analysis}
\label{sec:3}

This section is dedicated to establishing the weak convergence to
solutions to Problem~\ref{prob:19}, in the almost sure and
$L^2(\upOmega,\FE,\PP;\HS)$ modes, of the sequence
$(x_n)_{N\in\NN}$ generated by
the stochastic Algorithm~\ref{algo:20}. 

\begin{theorem}
\label{t:5}
In the context of Problem~\ref{prob:19}, let $(x_{\nS})_{\nnn}$ be
the sequence generated by Algorithm~\ref{algo:20}. For every 
$\nnn$ and every $\zS\in\ZS$, set
\begin{equation}
\label{e:GBeps}
\varepsilon_{\nS}(\cdot,\zS)
=\max\brk[c]3{0,
\EC2{\theta_{\nS}\brk2{
\scal1{w_{\nS}-\zS}{e_{\nS}^*+f_{\nS}^*}}
+\scal1{e_{\nS}}{w_{\nS}^*+\CS\zS}
+\scal1{e_{\nS}}{e_{\nS}^*}}{\XX_{\nS}}},
\end{equation}
and suppose that $\lambda_{\nS}$ is independent
of $\upsigma(\{x_{\mathsf{0}},\ldots,x_{\nS},d_{\nS}\})$ and
that $\EE(\lambda_{\nS}(2-\lambda_{\nS}))\geq 0$. 
Then the following hold:
\begin{enumerate}
\item
\label{t:50}
Let $\nnn$ and $\zS\in\ZS$. Then 
\begin{equation}
\label{e:110}
\scal1{{\zS}}{\EC{\theta_{\nS}t_{\nS}^*}{\XX_{\nS}}}
\leq\EC2{\theta_{\nS}\scal{w_{\nS}}{t_{\nS}^*}}{\XX_{\nS}} 
+\dfrac{1}{4\upalpha}
\EC1{\theta_{\nS}\norm{w_{\nS}-q_{\nS}}^2}{\XX_{\nS}}
+\varepsilon_{\nS}(\cdot,\zS)\;\;\Pas
\end{equation}
\item
\label{t:5i-}
$(x_{\nS})_{\nnn}$ lies in
$L^2(\upOmega,\FE,\PP;\HS)$.
\item
Suppose that, for every $\zS\in\ZS$,
$\sum_{\nnn}\EE\varepsilon_{\nS}(\cdot,\zS)\EE\lambda_{\nS}<\pinf$.
Then the following are satisfied:
\begin{enumerate}
\item
\label{t:5i}
$(\norm{x_{\nS}})_{\nnn}$ is bounded 
$\Pas$ and $(\EE\norm{x_{\nS}}^2)_{\nnn}$ is bounded.
\item
\label{t:5ii-}
$\sum_{\nnn}\EE(\lambda_{\nS}(2-\lambda_{\nS}))\EE\norm{d_{\nS}}^2
<\pinf$.
\item
\label{t:5ii}
Suppose that $\inf_{\nnn}\EE(\lambda_{\nS}(2-\lambda_{\nS}))>0$.
Then $\sum_{\nnn}\EE{\norm{x_{\nS+1}-x_{\nS}}^2}<\pinf$.
\item
\label{t:5iii}
Suppose that $\inf_{\nnn}\lambda_{\nS}>0\;\Pas$ and that 
$(t_{\nS}^*)_{\nnn}$ is bounded $\Pas$ Then 
$\varlimsup\Delta_{\nS}\leq0\;\Pas$
\item
\label{t:5iv-}
Suppose that 
$x_{\nS}-w_{\nS}-e_{\nS}\weakly\mathsf{0}\;\Pas$,
$w_{\nS}+e_{\nS}-q_{\nS}\to\mathsf{0}\;\Pas$, and
$w_{\nS}^*+e_{\nS}^*+\CS q_{\nS}\to\mathsf{0}\;\Pas$
Then $(x_{\nS})_{\nnn}$ converges weakly $\Pas$ and weakly
in $L^2(\upOmega,\FE,\PP;\HS)$ to a $\ZS$-valued 
random variable.
\item
\label{t:5iv}
Suppose that $\dim\HS<\pinf$,
$x_{\nS}-w_{\nS}-e_{\nS}\Pto\mathsf{0}$,
$w_{\nS}+e_{\nS}-q_{\nS}\Pto\mathsf{0}$, and
$w_{\nS}^*+e_{\nS}^*+\CS q_{\nS}\Pto\mathsf{0}$.
Then $(x_{\nS})_{\nnn}$ converges $\Pas$ and 
in $L^1(\upOmega,\FE,\PP;\HS)$ to a $\ZS$-valued random variable.
\end{enumerate}
\end{enumerate}
\end{theorem}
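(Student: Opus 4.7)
The plan is to recognize Algorithm~\ref{algo:20} as an instance of the abstract scheme of Theorem~\ref{t:moc} under the identification
\[
\eta_{\nS}=\scal{w_{\nS}}{t_{\nS}^*}+\dfrac{1}{4\upalpha}\norm{w_{\nS}-q_{\nS}}^2,
\]
which produces $\scal{x_{\nS}}{t_{\nS}^*}-\eta_{\nS}=\Delta_{\nS}$, so that the formulas for $\theta_{\nS}$ and $d_{\nS}$ in \eqref{e:a20} and \eqref{e:a2} coincide. The cornerstone inequality required by Theorem~\ref{t:moc} is then precisely \eqref{e:110}, and each conclusion of the present theorem is either a direct consequence of the corresponding item of Theorem~\ref{t:moc} or reduces to a cluster-point verification via demiclosedness of $\WS+\CS$. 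The main obstacle is the careful bookkeeping of the stochastic errors $(e_{\nS},e_{\nS}^*,f_{\nS}^*)$ when deriving~\eqref{e:110}; everything else is largely structural.

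For~\ref{t:50}, fix $\nnn$ and $\zS\in\ZS$, so that $(\zS,-\CS\zS)\in\gra\WS$. Since $(w_{\nS}+e_{\nS},w_{\nS}^*+e_{\nS}^*)\in\gra\WS$ almost surely, monotonicity of $\WS$ yields
\[
\scal{w_{\nS}-\zS}{w_{\nS}^*+\CS\zS}\ge-\scal{w_{\nS}-\zS}{e_{\nS}^*}-\scal{e_{\nS}}{w_{\nS}^*+\CS\zS}-\scal{e_{\nS}}{e_{\nS}^*}\;\;\Pas
\]
Substituting $c_{\nS}^*=\CS q_{\nS}-f_{\nS}^*$ into $t_{\nS}^*$ and splitting $\scal{w_{\nS}-\zS}{\CS q_{\nS}-\CS\zS}=\scal{w_{\nS}-q_{\nS}}{\CS q_{\nS}-\CS\zS}+\scal{q_{\nS}-\zS}{\CS q_{\nS}-\CS\zS}$, Young's inequality on the first piece and the $\upalpha$-cocoercivity of $\CS$ at $(q_{\nS},\zS)$ on the second piece absorb the $\upalpha\norm{\CS q_{\nS}-\CS\zS}^2$ residual and give $\scal{w_{\nS}-\zS}{t_{\nS}^*}\ge-(4\upalpha)^{-1}\norm{w_{\nS}-q_{\nS}}^2-\scal{w_{\nS}-\zS}{e_{\nS}^*+f_{\nS}^*}-\scal{e_{\nS}}{w_{\nS}^*+\CS\zS}-\scal{e_{\nS}}{e_{\nS}^*}$. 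Multiplying by $\theta_{\nS}\ge0$, conditioning on $\XX_{\nS}$, and gathering the error cross-terms into $\varepsilon_{\nS}(\cdot,\zS)$ as in \eqref{e:GBeps} produces \eqref{e:110}.

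For~\ref{t:5i-}, the structural hypotheses of Theorem~\ref{t:moc} are verified as follows: cocoercivity of $\CS$ implies $\upalpha^{-1}$-Lipschitzianness, so Lemma~\ref{l:100} places $\CS q_{\nS}$, and therefore $t_{\nS}^*=w_{\nS}^*+\CS q_{\nS}-f_{\nS}^*$, in $L^2(\upOmega,\FE,\PP;\HS)$; Cauchy--Schwarz puts $\eta_{\nS}$ in $L^1(\upOmega,\FE,\PP;\RR)$; and on $[\scal{x_{\nS}}{t_{\nS}^*}>\eta_{\nS}]$ the ratio $\eta_{\nS}/\norm{t_{\nS}^*}$ is dominated by $\norm{x_{\nS}}$, which lies in $L^2$. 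Parts~\ref{t:5i}, \ref{t:5ii-}, and \ref{t:5ii} then follow from the corresponding items~\ref{t:2ii}, \ref{t:2viid}, and \ref{t:2viie} of Theorem~\ref{t:moc}. For~\ref{t:5iii}, note that $\norm{d_{\nS}}=\Delta_{\nS}^+/\norm{t_{\nS}^*}$ on $[t_{\nS}^*\ne\mathsf{0}]$; the almost-sure summability of $\lambda_{\nS}(2-\lambda_{\nS})\norm{d_{\nS}}^2$ furnished by the Robbins--Siegmund step underlying the proof of Theorem~\ref{t:moc}, combined with $\inf_{\nS}\lambda_{\nS}>0$ and $\sup_{\nS}\norm{t_{\nS}^*}<\pinf$ almost surely, forces $\sum_{\nnn}(\Delta_{\nS}^+)^2<\pinf\;\Pas$, whence $\varlimsup\Delta_{\nS}\le0\;\Pas$

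For~\ref{t:5iv-}, I show $\WC(x_{\nS})_{\nnn}\subset\ZS\;\Pas$ and invoke Theorem~\ref{t:moc}\ref{t:2viif}. Fix $\upomega$ in a full-measure event on which the three stated convergences hold, let $x^*\in\WC(x_{\nS}(\upomega))_{\nnn}$, and extract a subsequence $(\kS_{\jS})_{\jjj}$ with $x_{\kS_{\jS}}\weakly x^*$. Then $w_{\kS_{\jS}}+e_{\kS_{\jS}}\weakly x^*$ and $q_{\kS_{\jS}}\weakly x^*$, and since $\CS$ is Lipschitzian,
\[
w_{\kS_{\jS}}^*+e_{\kS_{\jS}}^*+\CS(w_{\kS_{\jS}}+e_{\kS_{\jS}})=\brk1{w_{\kS_{\jS}}^*+e_{\kS_{\jS}}^*+\CS q_{\kS_{\jS}}}+\brk1{\CS(w_{\kS_{\jS}}+e_{\kS_{\jS}})-\CS q_{\kS_{\jS}}}\to\mathsf{0}
\]
strongly. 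Maximal monotonicity of $\WS+\CS$, together with weak--strong closedness of its graph, then gives $\mathsf{0}\in(\WS+\CS)x^*$, i.e., $x^*\in\ZS$. Part~\ref{t:5iv} reduces to~\ref{t:5iv-} in finite dimension: Lemma~\ref{l:2} passes from convergence in probability to almost-sure convergence along a subsequence, the preceding demiclosedness argument then applies verbatim (weak and strong coincide in $\HS$), so the hypothesis of Theorem~\ref{t:moc}\ref{t:2viig} is met and delivers the claimed almost-sure and $L^1$ convergence.
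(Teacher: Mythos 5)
Your proposal follows the paper's proof essentially step for step: the same identification $\eta_{\nS}=\scal{w_{\nS}}{t_{\nS}^*}+(4\upalpha)^{-1}\norm{w_{\nS}-q_{\nS}}^2$ casting \eqref{e:a20} as a realization of \eqref{e:a2}, the same monotonicity-plus-cocoercivity derivation of \eqref{e:110} (the paper completes the square where you invoke Young's inequality, which is the same estimate), and the same demiclosedness arguments for \ref{t:5iv-} and \ref{t:5iv}. Two details, however, do not work as written. First, in \ref{t:5i-} you assert that on $[\scal{x_{\nS}}{t_{\nS}^*}>\eta_{\nS}]$ the ratio $\eta_{\nS}/\norm{t_{\nS}^*}$ is ``dominated by $\norm{x_{\nS}}$.'' That is only an upper bound; since $\eta_{\nS}$ has no sign, an upper bound does not yield the required $L^2$ membership of the ratio. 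The paper instead writes $\eta_{\nS}=\scal{x_{\nS}}{t_{\nS}^*}-\Delta_{\nS}$ and uses $0<\Delta_{\nS}\leq\scal{x_{\nS}-w_{\nS}}{t_{\nS}^*}$ on that event to obtain the two-sided bound $\abs{\eta_{\nS}}/\norm{t_{\nS}^*}\leq\norm{x_{\nS}}+\norm{x_{\nS}-w_{\nS}}$, which is what places the ratio in $L^2(\upOmega,\FE,\PP;\RR)$.

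Second, for \ref{t:5iii} you appeal to ``almost-sure summability of $\lambda_{\nS}(2-\lambda_{\nS})\norm{d_{\nS}}^2$.'' Theorem~\ref{t:moc}\ref{t:2viid} only provides summability of the product of expectations $\EE(\lambda_{\nS}(2-\lambda_{\nS}))\,\EE\norm{d_{\nS}}^2$, and, because $\lambda_{\nS}$ is allowed to exceed $2$, the factor $\lambda_{\nS}(2-\lambda_{\nS})$ can be negative pointwise, so even almost-sure summability of that product would not control $\norm{d_{\nS}}^2$. The paper's route is to use \ref{t:5ii}, i.e., $\sum_{\nnn}\EE\norm{x_{\nS+1}-x_{\nS}}^2<\pinf$ and hence $\sum_{\nnn}\norm{x_{\nS+1}-x_{\nS}}^2<\pinf\;\Pas$, combined with $\norm{x_{\nS+1}-x_{\nS}}=\lambda_{\nS}\max\{0,\Delta_{\nS}\}/\norm{t_{\nS}^*}$ on $[t_{\nS}^*\neq 0]$, $\inf_{\nnn}\lambda_{\nS}>0\;\Pas$, and the almost-sure boundedness of $(t_{\nS}^*)_{\nnn}$, to force $\max\{0,\Delta_{\nS}\}\to 0\;\Pas$ Both repairs are local and leave your overall architecture, which is that of the paper, intact.
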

\begin{proof}
\ref{t:50}:
Note that $(\zS,-{\CS}{\zS})\in\gra\WS$. Hence, \eqref{e:a20}
and the monotonicity of $\WS$ yield
\begin{align}
&\scal1{\zS-w_{\nS}-e_{\nS}}{w_{\nS}^*+e_{\nS}^*+c_{\nS}^*}
\nonumber\\
&\quad=\scal{\zS-w_{\nS}-e_{\nS}}{w_{\nS}^*+e_{\nS}^*+\CS q_{\nS}}
-\scal{\zS-w_{\nS}-e_{\nS}}{f_{\nS}^*}
\nonumber\\
&\quad=\scal{\zS-w_{\nS}-e_{\nS}}{w_{\nS}^*+e_{\nS}^*+\CS\zS}
+\scal{\zS-w_{\nS}-e_{\nS}}{\CS q_{\nS}-\CS\zS}
-\scal{\zS-w_{\nS}-e_{\nS}}{f_{\nS}^*}
\nonumber\\
&\quad\leq\scal{\zS-w_{\nS}-e_{\nS}}{\CS q_{\nS}-\CS\zS}
-\scal{\zS-w_{\nS}-e_{\nS}}{f_{\nS}^*}
\nonumber\\
&\quad=-\scal{\zS-q_{\nS}}{\CS\zS-\CS q_{\nS}}
+\scal{w_{\nS}-q_{\nS}}{\CS\zS-\CS q_{\nS}}
+\scal{e_{\nS}}{\CS\zS-\CS q_{\nS}}
-\scal{\zS-w_{\nS}-e_{\nS}}{f_{\nS}^*}
\nonumber\\
&\quad\leq-\upalpha\norm{\CS\zS-\CS q_{\nS}}^2
+\norm{w_{\nS}-q_{\nS}}\norm{\CS\zS-\CS q_{\nS}}
+\scal{e_{\nS}}{\CS\zS-\CS q_{\nS}}
-\scal{\zS-w_{\nS}-e_{\nS}}{f_{\nS}^*}
\nonumber\\
&\quad=\dfrac{\norm{w_{\nS}-q_{\nS}}^2}{4\upalpha}
-\abs2{\brk1{2\sqrt{\upalpha}}^{-1}\norm{w_{\nS}-q_{\nS}}
-\sqrt{\upalpha}\norm{\CS\zS-\CS q_{\nS}}}^2
\nonumber\\
&\hspace{1cm}
+\scal{e_{\nS}}{\CS\zS-\CS q_{\nS}}
-\scal{\zS-w_{\nS}-e_{\nS}}{f_{\nS}^*}
\nonumber\\
&\quad\leq\dfrac{\norm{w_{\nS}-q_{\nS}}^2}{4\upalpha}
+\scal{w_{\nS}-\zS}{f_{\nS}^*}
+\scal{e_{\nS}}{\CS\zS-\CS q_{\nS}}
+\scal{e_{\nS}}{f_{\nS}^*}\;\;\Pas
\end{align}
Therefore, since $t_{\nS}^*=w_{\nS}^*+c_{\nS}^*$, 
\begin{equation}
\label{e:111}
\scal1{\zS}{t_{\nS}^*}
\leq\scal1{w_{\nS}}{t_{\nS}^*}
+\dfrac{\norm{w_{\nS}-q_{\nS}}^2}{4\upalpha}
+\scal1{w_{\nS}-\zS}{e_{\nS}^*+f_{\nS}^*}
+\scal1{e_{\nS}}{w_{\nS}^*+\CS\zS}
+\scal1{e_{\nS}}{e_{\nS}^*}\;\;\Pas
\end{equation}
On the other hand, because $\theta_{\nS}\geq0\;\Pas$, it 
follows from scaling by $\theta_{\nS}$ and taking the conditional
expectation with respect to $\XX_{\nS}$ in \eqref{e:111} that
\eqref{e:110} holds.

\ref{t:5i-}: Let $\nnn$ and set
$\eta_{\nS}=\scal{w_{\nS}}{t_{\nS}^*}
+(4\upalpha)^{-1}\norm{w_{\nS}-q_{\nS}}^2$.
Then $\eta_{\nS}\in L^1(\upOmega,\FE,\PP;\RR)$ and
$\varepsilon_{\nS}\in\mathfrak{C}(\upOmega,\FE,\PP;\HS)$.
Furthermore, by the Cauchy--Schwarz inequality,
\begin{align}
\frac{1}{2}\EE
\Bigg|\dfrac{\mathsf{1}_{[t_{\nS}^*\neq0]}\mathsf{1}_{
\left[\scal{x_{\nS}}{t_{\nS}^*}>\eta_{\nS}\right]}\eta_{\nS}}
{\|t_{\nS}^*\|+\mathsf{1}_{[t_{\nS}^*=0]}}\Bigg|^2
&\leq\EE\Bigg|\dfrac{\mathsf{1}_{[t_{\nS}^*\neq0]}\mathsf{1}_{
\left[\scal{x_{\nS}}{t_{\nS}^*}>\eta_{\nS}\right]}
\big(\scal{x_{\nS}}{t_{\nS}^*}-\eta_{\nS}\big)}
{\|t_{\nS}^*\|+\mathsf{1}_{[t_{\nS}^*=0]}}\Bigg|^2
+\EE\Bigg|\dfrac{\mathsf{1}_{[t_{\nS}^*\neq0]}\mathsf{1}_{
\left[\scal{x_{\nS}}{t_{\nS}^*}>\eta_{\nS}\right]}
\scal{x_{\nS}}{t_{\nS}^*}}
{\|t_{\nS}^*\|+\mathsf{1}_{[t_{\nS}^*=0]}}\Bigg|^2
\nonumber\\
&\leq\EE\Bigg|\dfrac{\mathsf{1}_{[t_{\nS}^*\neq0]}\mathsf{1}_{
\left[\scal{x_{\nS}}{t_{\nS}^*}>\eta_{\nS}\right]}
\scal{x_{\nS}}{t_{\nS}^*}-\eta_{\nS}}
{\|t_{\nS}^*\|+\mathsf{1}_{[t_{\nS}^*=0]}}\Bigg|^2
+\EE\norm{x_{\nS}}^2\nonumber\\
&=\EE\Bigg|\dfrac{\mathsf{1}_{[t_{\nS}^*\neq0]}\mathsf{1}_{
\left[\scal{x_{\nS}}{t_{\nS}^*}>\eta_{\nS}\right]}
\big(\scal{x_{\nS}-w_{\nS}}{t_{\nS}^*}
-(4\upalpha)^{-1}\norm{w_{\nS}-q_{\nS}}^2\big)}
{\|t_{\nS}^*\|+\mathsf{1}_{[t_{\nS}^*=0]}}\Bigg|^2
+\EE\norm{x_{\nS}}^2\nonumber\\
&\leq\EE\Bigg|\dfrac{\mathsf{1}_{[t_{\nS}^*\neq0]}\mathsf{1}_{
\left[\scal{x_{\nS}}{t_{\nS}^*}>\eta_{\nS}\right]}
\scal{x_{\nS}-w_{\nS}}{t_{\nS}^*}}
{\|t_{\nS}^*\|+\mathsf{1}_{[t_{\nS}^*=0]}}\Bigg|^2
+\EE\norm{x_{\nS}}^2\nonumber\\
&\leq\EE\norm{x_{\nS}-w_{\nS}}^2+\EE\norm{x_{\nS}}^2
\nonumber\\
&<\pinf.
\end{align}
Altogether, in view of \ref{t:50}, we deduce that \eqref{e:a20} is
a realization of \eqref{e:a2}. Hence, the claim follows from
Theorem~\ref{t:moc}\ref{t:2i-}.

\ref{t:5i}--\ref{t:5ii}: These follow from
Theorem~\ref{t:moc}\ref{t:2ii}--\ref{t:2viie}.

\ref{t:5iii}: Since $\inf_{\nnn}\lambda_{\nS}>0\;\Pas$, we proceed,
for $\PP$-almost every $\upomega\in\upOmega$, as in the proof of
\cite[Proposition~3(iii)]{Sadd22} to get the result using
\ref{t:5ii}.

\ref{t:5iv-}: In view of \ref{t:5i}, we fix $\upOmega'\in\FE$ 
such that
\begin{equation}
\label{e:l123-}
\PP\brk{\upOmega'}=1\;\;\text{and}\;\;
(\forall\upomega\in\upOmega')\quad\begin{cases}
x_{\nS}(\upomega)-w_{\nS}(\upomega)-e_{\nS}(\upomega)
\weakly\mathsf{0};\\
w_{\nS}(\upomega)+e_{\nS}(\upomega)-q_{\nS}(\upomega)
\to\mathsf{0};\\
w^*_{\nS}(\upomega)+e_{\nS}^*(\upomega)+\CS q_{\nS}(\upomega)
\to\mathsf{0};\\
(\norm{x_{\nS}(\upomega)})_{\nnn}\;
\text{is bounded.}
\end{cases}
\end{equation}
Now let $\upomega\in\upOmega'$ and 
$\xS\in\WC(x_{\nS}(\upomega))_{\nnn}$. 
Then there exists a strictly increasing sequence in $\NN$, say 
$(\kS_{\nS})_{\nnn}$, such that 
$x_{\kS_{\nS}}(\upomega)\weakly\xS$. Furthermore,
\begin{equation}
\label{e:cuz1}
w_{\kS_{\nS}}(\upomega)+e_{\kS_{\nS}}(\upomega)
=x_{\kS_{\nS}}(\upomega)-\brk1{x_{\kS_{\nS}}(\upomega)-
w_{\kS_{\nS}}(\upomega)-e_{\kS_{\nS}}(\upomega)}\weakly\xS
\end{equation}
and, since $\CS$ is $\alpha^{-1}$-Lipschitzian, 
\begin{align}
\label{e:cuz2}
&\norm1{w_{\kS_{\nS}}^*(\upomega)+e_{\kS_{\nS}}^*(\upomega)
+\CS \brk1{w_{\kS_{\nS}}(\upomega)+e_{\kS_{\nS}}(\upomega)}}
\nonumber\\
&\qquad\leq
\norm1{w_{\kS_{\nS}}^*(\upomega)+e_{\kS_{\nS}}^*(\upomega)
+\CS q_{\kS_{\nS}}(\upomega)}
+\norm1{\CS\brk1{w_{\kS_{\nS}}(\upomega)+e_{\kS_{\nS}}(\upomega)}
-\CS q_{\kS_{\nS}}(\upomega)}
\nonumber\\
&\qquad\leq
\norm1{w_{\kS_{\nS}}^*(\upomega)+e_{\kS_{\nS}}^*(\upomega)
+\CS q_{\kS_{\nS}}(\upomega)}
+\dfrac{\norm1{w_{\kS_{\nS}}(\upomega)+e_{\kS_{\nS}}(\upomega)
-q_{\kS_{\nS}}(\upomega)}}{\upalpha}
\nonumber\\
&\qquad\to 0.
\end{align}
On the other hand, \eqref{e:a20} yields
\begin{equation}
\label{e:cuz3}
(\forall\nnn)\quad
\brk2{w_{\kS_{\nS}}(\upomega)+e_{\kS_{\nS}}(\upomega),
w_{\kS_{\nS}}^*(\upomega)+e_{\kS_{\nS}}^*(\upomega)
+\CS \brk1{w_{\kS_{\nS}}(\upomega)+e_{\kS_{\nS}}(\upomega)}}
\in\gra(\WS+\CS).
\end{equation}
Since, by \cite[Corollary~25.5(i)]{Livre1}, $\WS+\CS$ is maximally
monotone, \eqref{e:cuz1}, \eqref{e:cuz2}, 
\eqref{e:cuz3}, and \cite[Proposition~20.38(ii)]{Livre1} imply
that $\xS\in\ZS$. Since $\xS$ is arbitrarily chosen in
$\WC(x_{\nS}(\upomega))_{\nnn}$, we deduce that
$\WC(x_{\nS}(\upomega))_{\nnn}\subset\ZS$ and, since
$\PP(\upOmega')=1$, that $\WC(x_{\nS})_{\nnn}\subset\ZS\;\Pas$
Therefore, it follows from Theorems~\ref{t:moc}\ref{t:2viif} that
$(x_{\nS})_{\nnn}$ converges weakly $\Pas$ and weakly in
$L^2(\upOmega,\FE,\PP;\HS)$ to a $\ZS$-valued random variable.

\ref{t:5iv}: Lemma~\ref{l:2} guarantees the existence of 
a strictly increasing sequence in $\NN$, say
$(\lS_{\nS})_{\nnn}$, such that 
$x_{\lS_{\nS}}-w_{\lS_{\nS}}-e_{\lS_{\nS}}\to\mathsf{0}\;\Pas$,
$w_{\lS_{\nS}}+e_{\lS_{\nS}}-q_{\lS_{\nS}}\to\mathsf{0}\;\Pas$, and
$w^*_{\lS_{\nS}}+e_{\lS_{\nS}}^*+\CS q_{\lS_{\nS}}\to\mathsf{0}\;
\Pas$
Additionally, it follows from \ref{t:5i} that
$(\norm{x_{\lS_{\nS}}})_{\nnn}$ is bounded
$\Pas$ Let $\upOmega'\in\FE$ be such that
\begin{equation}
\label{e:l123}
\PP\brk{\upOmega'}=1\;\;\text{and}\;\;
(\forall\upomega\in\upOmega')\quad\begin{cases}
x_{\lS_{\nS}}(\upomega)-w_{\lS_{\nS}}(\upomega)
-e_{\lS_{\nS}}(\upomega)\to\mathsf{0};\\
w_{\lS_{\nS}}(\upomega)+e_{\lS_{\nS}}(\upomega)
-q_{\lS_{\nS}}(\upomega)\to\mathsf{0};\\
w^*_{\lS_{\nS}}(\upomega)+e_{\lS_{\nS}}^*(\upomega)
+\CS q_{\lS_{\nS}}(\upomega)\to\mathsf{0};\\
(\norm{x_{\lS_{\nS}}(\upomega)})_{\nnn}\;
\text{is bounded.}
\end{cases}
\end{equation}
Let $\upomega\in\upOmega'$.
We derive from \eqref{e:l123} and the fact that $\HS$ is
finite-dimensional that there exists 
$\xS\in\HS$ and a further subsequence
$(\kS_{\lS_{\nS}})_{\nnn}$ such that 
$x_{\kS_{\lS_{\nS}}}(\upomega)\to\xS$, 
\begin{equation}
w_{\kS_{\lS_{\nS}}}(\upomega)+e_{\kS_{\lS_{\nS}}}(\upomega)
=x_{\kS_{\lS_{\nS}}}(\upomega)-
\brk1{x_{\kS_{\lS_{\nS}}}(\upomega)-w_{\kS_{\lS_{\nS}}}(\upomega)
-e_{\kS_{\lS_{\nS}}}(\upomega)}\to\xS,
\end{equation}
and, as in \eqref{e:cuz2},
\begin{equation}
\label{e:}
w_{\kS_{\lS_{\nS}}}^*(\upomega)
+e_{\kS_{\lS_{\nS}}}^*(\upomega)
+\CS \brk1{w_{\kS_{\lS_{\nS}}}(\upomega)
+e_{\kS_{\lS_{\nS}}}(\upomega)}\to 0.
\end{equation}
However, as in \eqref{e:cuz3}, 
\begin{equation}
(\forall\nnn)\quad
\brk2{w_{\kS_{\lS_{\nS}}}(\upomega)+e_{\kS_{\lS_{\nS}}}(\upomega),
w_{\kS_{\lS_{\nS}}}^*(\upomega)+e_{\kS_{\lS_{\nS}}}^*(\upomega)
+\CS \brk1{w_{\kS_{\lS_{\nS}}}(\upomega)
+e_{\kS_{\lS_{\nS}}}(\upomega)}}
\in\gra(\WS+\CS),
\end{equation}
and the maximal monotonicity of $\WS+\CS$ yields
$\xS\in\ZS$. Thus, $(x_{\nS}(\upomega))_{\nnn}$ has 
a cluster point in $\ZS$ and we conclude that 
$\SC(x_{\nS})_{\nnn}\cap\ZS\neq\emp\;\Pas$ 
Therefore, it follows from Theorem~\ref{t:moc}\ref{t:2viig} that 
$(x_{\nS})_{\nnn}$ converges $\Pas$ and in
$L^1(\upOmega,\FE,\PP;\HS)$ to a $\ZS$-valued random variable.
\end{proof}

\begin{remark}
The random relaxations parameters 
$(\lambda_{\nS})_{\nnn}$ satisfy
$\inf_{\nnn}\EE(\lambda_{\nS}(2-\lambda_{\nS}))\geq 0$.
When the relaxation parameters are deterministic,
this condition imposes that, for every $\nnn$,  
$\uplambda_{\nS}\in\left]0,2\right[$,
which is the standard range found in deterministic
methods in the literature \cite{Sadd22,Acnu24,Ecks92,Gols79}.
However, Theorem~\ref{t:5} allows for the use of so-called
super relaxation parameters \cite{Moco25} which may exceed 2
by satisfying
\begin{equation}
\inf_{\nnn}\EE\brk1{\lambda_{\nS}(2-\lambda_{\nS})}>0
\;\;\text{and}\;\;\inf_{\nnn}\PP([\lambda_{\nS}>2])>0.
\end{equation}
Note that the use of super relaxation parameters leads to novel
results and faster convergence; see 
\cite[Section~6]{Moco25} for examples of super relaxation
strategies.
\end{remark}

\section{Stochastic proximal point algorithm}
\label{sec:4}

The proximal point algorithm is a classical method for finding a
zero of a maximal monotone operator $\AS\colon\HS\to 2^{\HS}$
\cite{Brez78,Lieu69,Mart70,Roc76a}. In this section, we propose a
stochastic version of it which involves stochastic approximations 
of the resolvents together with random relaxations.

\begin{theorem}
\label{t:3}
Let $\AS\colon\HS\to2^{\HS}$ be a maximally monotone operator such
that $\zer\AS\neq\emp$, let $(\upgamma_{\nS})_{\nnn}$ be a sequence
in $\RPP$, and let $x_{\mathsf{0}}\in L^2(\upOmega,\FE,\PP;\HS)$.
Iterate
\begin{equation}
\label{e:p25}
\begin{array}{l}
\textup{for}\;\nS=0,1,\ldots\\
\left\lfloor
\begin{array}{l}
\textup{take}\;e_{\nS}\in L^2(\upOmega,\FE,\PP;\HS)\;
\textup{and}\;\lambda_{\nS}\in 
L^\infty(\upOmega,\FE,\PP;\left]0,2\right[)\\
x_{\nS+1}=x_{\nS}+\lambda_{\nS}
\brk1{\mathsf{J}_{\upgamma_{\nS}\AS}^{}x_{\nS}-e_{\nS}-x_{\nS}}.
\end{array}
\right.\\
\end{array}
\end{equation}
Suppose that, for every $\nnn$, $\lambda_{\nS}$ is independent of
$\upsigma(x_{\mathsf{0}},\ldots,x_{\nS},e_{\nS})$, and that one 
of the following holds:
\begin{enumerate}
\item
\label{t:3i}
$\sum_{\nnn}\EE(\lambda_{\nS}(2-\lambda_{\nS}))=\pinf$,
$\sum_{\nnn}\sqrt{\EE|\lambda_{\nS}|^2\EE\norm{e_{\nS}}^2}<\pinf$,
$(\EE\norm{e_{\nS}}^2)_{\nnn}$ is bounded, and 
$(\forall\nnn)$ $\upgamma_{\nS}=1$.
\item
\label{t:3iii}
$\inf_{\nnn}\EE(\lambda_{\nS}(2-\lambda_{\nS}))>0$,
$\inf_{\nnn}\upgamma_{\nS}>0$, and
$\sum_{\nnn}\sqrt{\EE\norm{e_{\nS}}^2}<\pinf$.
\item
\label{t:3ii}
$\sum_{\nnn}\upgamma_{\nS}^2=\pinf$,
$\sum_{\nnn}\sqrt{\EE\norm{e_{\nS}}^2}<\pinf$,
and $(\forall\nnn)$ $\lambda_{\nS}=1\;\Pas$
\end{enumerate}
Then $(x_{\nS})_{\nnn}$ converges weakly $\Pas$ and weakly in 
$L^2(\upOmega,\FE,\PP;\HS)$ to a $(\zer\AS)$-valued random 
variable.
\end{theorem}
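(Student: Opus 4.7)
The plan is to realize \eqref{e:p25} as an instance of Algorithm~\ref{algo:20} and then apply Theorem~\ref{t:5}. In Problem~\ref{prob:19}, take $\WS=\AS$ and $\CS\colon\xS\mapsto\mathsf{0}$ (which is cocoercive with any constant $\upalpha\in\RPP$). For each $\nnn$, set
\begin{equation*}
w_{\nS}=\mathsf{J}_{\upgamma_{\nS}\AS}^{}x_{\nS}-e_{\nS},\;\;
w_{\nS}^*=\upgamma_{\nS}^{-1}(x_{\nS}-w_{\nS}),\;\;
e_{\nS}^*=-\upgamma_{\nS}^{-1}e_{\nS},\;\;
q_{\nS}=w_{\nS},\;\;
c_{\nS}^*=f_{\nS}^*=\mathsf{0}.
\end{equation*}
By definition of the resolvent, $(w_{\nS}+e_{\nS},w_{\nS}^*+e_{\nS}^*)=(\mathsf{J}_{\upgamma_{\nS}\AS}^{}x_{\nS},\upgamma_{\nS}^{-1}(x_{\nS}-\mathsf{J}_{\upgamma_{\nS}\AS}^{}x_{\nS}))\in\gra\AS$, and $c_{\nS}^*+f_{\nS}^*=\CS q_{\nS}$ trivially. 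A direct computation yields $t_{\nS}^*=w_{\nS}^*$, $\Delta_{\nS}=\upgamma_{\nS}^{-1}\|x_{\nS}-w_{\nS}\|^2\geq 0$, $\theta_{\nS}=\upgamma_{\nS}$ on $[t_{\nS}^*\neq 0]$, and hence $d_{\nS}=x_{\nS}-w_{\nS}=x_{\nS}-\mathsf{J}_{\upgamma_{\nS}\AS}^{}x_{\nS}+e_{\nS}$. The update $x_{\nS+1}=x_{\nS}-\lambda_{\nS}d_{\nS}$ then coincides with \eqref{e:p25}.

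Next I would verify the hypotheses of Theorem~\ref{t:5}. With the above choices, the integrand inside the conditional expectation in \eqref{e:GBeps} reduces, on $[t_{\nS}^*\neq 0]$, to $-\scal{\mathsf{J}_{\upgamma_{\nS}\AS}^{}x_{\nS}-\zS}{e_{\nS}}+\|e_{\nS}\|^2+\upgamma_{\nS}^{-1}\scal{e_{\nS}}{x_{\nS}-\mathsf{J}_{\upgamma_{\nS}\AS}^{}x_{\nS}}$. Using Cauchy--Schwarz, the nonexpansiveness of $\mathsf{J}_{\upgamma_{\nS}\AS}^{}$, and the boundedness of $(\EE\|x_{\nS}\|^2)_{\nnn}$ supplied by Theorem~\ref{t:5}\ref{t:5i}, the required summability $\sum_{\nnn}\EE\varepsilon_{\nS}(\cdot,\zS)\EE\lambda_{\nS}<\pinf$ reduces to the error-control bounds hypothesized in each of \ref{t:3i}--\ref{t:3ii}. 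In parallel, these bounds also yield $\sum_{\nnn}\|e_{\nS}\|<\pinf\;\Pas$ by Fubini, hence $e_{\nS}\to\mathsf{0}\;\Pas$; this is exactly the second limit condition of Theorem~\ref{t:5}\ref{t:5iv-}, namely $w_{\nS}+e_{\nS}-q_{\nS}=e_{\nS}\to\mathsf{0}\;\Pas$

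The main obstacle is establishing the remaining two limit conditions $x_{\nS}-\mathsf{J}_{\upgamma_{\nS}\AS}^{}x_{\nS}\weakly\mathsf{0}\;\Pas$ and $\upgamma_{\nS}^{-1}(x_{\nS}-\mathsf{J}_{\upgamma_{\nS}\AS}^{}x_{\nS})\to\mathsf{0}\;\Pas$, with the argument being case-dependent. Case \ref{t:3iii} is the easiest: $\inf\EE(\lambda_{\nS}(2-\lambda_{\nS}))>0$ together with Theorem~\ref{t:5}\ref{t:5ii-} yields $\sum\EE\|d_{\nS}\|^2<\pinf$, hence $d_{\nS}\to\mathsf{0}\;\Pas$, and $\inf_{\nnn}\upgamma_{\nS}>0$ finishes both conditions. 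Case \ref{t:3i}, where $\upgamma_{\nS}\equiv 1$ but $\EE(\lambda_{\nS}(2-\lambda_{\nS}))$ may vanish, calls for a Krasnosel'ski\u{\i}--Mann-type argument exploiting the firm nonexpansiveness \eqref{e:s6} of $\mathsf{J}_{\AS}$ to render $(\|x_{\nS}-\mathsf{J}_{\AS}x_{\nS}\|)_{\nnn}$ quasi-decreasing up to a summable perturbation; combined with $\sum\EE(\lambda_{\nS}(2-\lambda_{\nS}))=\pinf$ and Theorem~\ref{t:5}\ref{t:5ii-}, this forces $d_{\nS}\to\mathsf{0}\;\Pas$ Case \ref{t:3ii}, where $\lambda_{\nS}\equiv 1$ gives $d_{\nS}\to\mathsf{0}\;\Pas$ directly but $\upgamma_{\nS}$ may vanish, requires a Br\'ezis--Lions-style argument using $\sum\upgamma_{\nS}^2=\pinf$ and the firm nonexpansiveness of $\mathsf{J}_{\upgamma_{\nS}\AS}^{}$ to force the dual variable $\upgamma_{\nS}^{-1}(x_{\nS}-\mathsf{J}_{\upgamma_{\nS}\AS}^{}x_{\nS})$ to vanish. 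Once all three limit conditions are secured, Theorem~\ref{t:5}\ref{t:5iv-} delivers the claimed weak $\Pas$ and weak $L^2$ convergence to a $(\zer\AS)$-valued random variable.
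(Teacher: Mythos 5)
Your reduction of \eqref{e:p25} to Algorithm~\ref{algo:20} is exactly the one used in the paper, and your computations of $t_{\nS}^*$, $\theta_{\nS}$, and $d_{\nS}$ are correct. However, there are two genuine gaps, both of which the paper resolves with a device that is entirely absent from your proposal: the error-free auxiliary sequences $y_{\mathsf{0},\kS}=x_{\kS}$, $y_{\nS+1,\kS}=y_{\nS,\kS}+\lambda_{\nS+\kS}(\mathsf{J}_{\upgamma_{\nS+\kS}\AS}y_{\nS,\kS}-y_{\nS,\kS})$. First, your verification of $\sum_{\nnn}\EE\varepsilon_{\nS}(\cdot,\zS)\EE\lambda_{\nS}<\pinf$ invokes ``the boundedness of $(\EE\|x_{\nS}\|^2)_{\nnn}$ supplied by Theorem~\ref{t:5}\ref{t:5i}'' --- but that boundedness is a conclusion drawn under the hypothesis that this very sum is finite, so the argument is circular. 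The paper breaks the circle by applying Theorem~\ref{t:5} to each $(y_{\nS,\kS})_{\nnn}$ (for which $\varepsilon_{\nS}\equiv 0$), obtaining boundedness of $(\EE\|y_{\nS,\kS}\|^2)_{\nnn}$ unconditionally, and transferring it to $(x_{\nS})_{\nnn}$ via $\|x_{\nS+\kS}-y_{\nS,\kS}\|_{L^2}\leq\sum_{\jS\geq\kS}\sqrt{\EE|\lambda_{\jS}|^2\EE\|e_{\jS}\|^2}$.

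Second, in case \ref{t:3ii} your plan is to run a Br\'ezis--Lions argument directly on the perturbed iterates to force $\upgamma_{\nS}^{-1}(x_{\nS}-\mathsf{J}_{\upgamma_{\nS}\AS}x_{\nS})\to\mathsf{0}$ and then invoke Theorem~\ref{t:5}\ref{t:5iv-} for $(x_{\nS})_{\nnn}$ itself. This does not go through: the monotone decrease of the dual residual rests on the exact recursion $x_{\nS+1}=\mathsf{J}_{\upgamma_{\nS}\AS}x_{\nS}$, and the error $e_{\nS}$ enters the recursion for the dual residual with a factor of order $\upgamma_{\nS+1}^{-1}$, which need not be summable when $\upgamma_{\nS}\to 0$ (only $\sum_{\nnn}\upgamma_{\nS}^2=\pinf$ is assumed). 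The paper instead proves the decrease for each error-free sequence $(y_{\nS,\kS})_{\nnn}$, applies Theorem~\ref{t:5}\ref{t:5iv-} to each of them to obtain weak limits $y_{\kS}$, shows that $(y_{\kS})_{\kS\in\NN}$ is Cauchy using $\|y_{\kS+1}-y_{\kS}\|\leq\|e_{\kS}\|$, and concludes for $(x_{\nS})_{\nnn}$ by a double limit --- none of which your outline anticipates. Two smaller points: in case \ref{t:3i} the hypotheses give only $\sum_{\nnn}|\lambda_{\nS}|\,\|e_{\nS}\|<\pinf\;\Pas$, not $\sum_{\nnn}\|e_{\nS}\|<\pinf\;\Pas$ as you claim (harmless here only because the condition $w_{\nS}+e_{\nS}-q_{\nS}\to\mathsf{0}$ enters the proof of Theorem~\ref{t:5}\ref{t:5iv-} solely through $\CS$, which is zero); and your residual arguments for cases \ref{t:3i} and \ref{t:3iii}, while viable in outline (case \ref{t:3iii} is indeed immediate from Theorem~\ref{t:5}\ref{t:5ii-}), leave the quasi-monotonicity estimate for $\|x_{\nS}-\mathsf{J}_{\AS}x_{\nS}\|$ unproven.
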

\begin{proof}
We apply Theorem~\ref{t:5} with $\WS=\AS$, $\CS=0$ 
(hence $\ZS=\zer\AS$) and
\begin{equation}
\label{e:s1}
(\forall\nnn)\quad
\begin{cases}
w_{\nS}=\mathsf{J}_{\upgamma_{\nS}\AS}x_{\nS}-e_{\nS};\\
w_{\nS}^*=\upgamma_{\nS}^{-1}\brk{x_{\nS}-w_{\nS}};\\
q_{\nS}=w_{\nS};\\
c_{\nS}^*=f_{\nS}^*=0;\\
e_{\nS}^*=-\upgamma_{\nS}^{-1}e_{\nS}.
\end{cases}
\end{equation}
In this setting, it follows from \cite[Proposition~23.22]{Livre1}
that
\begin{equation}
(\forall\nnn)\quad\brk1{w_{\nS}+e_{\nS},w_{\nS}^*+e_{\nS}^*}
=\brk1{\mathsf{J}_{\upgamma_{\nS}\AS}^{}x_{\nS},
\upgamma_{\nS}^{-1}
(x_{\nS}-\mathsf{J}_{\upgamma_{\nS}\AS}^{}x_{\nS})}\in\gra\AS\;\;
\Pas
\end{equation}
and that algorithm~\eqref{e:p25} is an instantiation of
Algorithm~\ref{algo:20} with 
\begin{equation}
\label{e:s7}
(\forall\nnn)\quad
t_{\nS}^*=\upgamma_{\nS}^{-1}\brk{x_{\nS}-w_{\nS}}
\quad\text{and}\quad
\theta_{\nS}=\upgamma_{\nS}.
\end{equation}
We therefore deduce from
Theorem~\ref{t:5}\ref{t:5i-} that the sequence $(x_{\nS})_{\nnn}$
lies in $L^2(\upOmega,\FE,\PP;\HS)$. Next, let us define a family
of auxiliary sequences as follows. For every $\kS\in\NN$, set
\begin{equation}
\label{e:s5}
y_{\mathsf{0},\kS}=x_{\kS}\quad\text{and}\quad
(\forall\nnn)\quad y_{\nS+1,\kS}
=y_{\nS,\kS}+\lambda_{\nS+\kS}
\brk1{\mathsf{J}_{\upgamma_{\nS+\kS}\AS}^{}y_{\nS,\kS}
-y_{\nS,\kS}}.
\end{equation}
Let $\kS\in\NN$.
Then, as above, $(y_{\nS,\kS})_{\nnn}$ is a sequence generated by 
an instantiation of Algorithm~\ref{algo:20} now initialized at 
$x_{\kS}$ with, for every $\nnn$, $e_{\nS}=0$ and 
$q_{\nS}=\mathsf{J}_{\upgamma_{\nS+\kS}\AS}^{}y_{\nS,\kS}$.
Consequently, Theorem~\ref{t:5}\ref{t:5i} 
asserts that $(\norm{y_{\nS,\kS}})_{\nnn}$ is bounded $\Pas$ and 
that $(\EE\norm{y_{\nS,\kS}}^2)_{\nnn}$ is bounded. 
Additionally, we deduce from Theorem~\ref{t:5}\ref{t:5ii-} that
\begin{equation}
\label{e:s2}
\sum_{\nnn}\EE\brk1{\lambda_{\nS}(2-\lambda_{\nS})}
\EE\norm1{y_{\nS,\kS}
-\mathsf{J}_{\upgamma_{\nS+\kS}\AS}^{}y_{\nS,\kS}}^2<\pinf.
\end{equation}
Next, let us show that, under any of scenarios
\ref{t:3i}--\ref{t:3ii}, 
\begin{equation}
\label{e:s87}
\norm{y_{\nS,\kS}-\mathsf{J}_{\upgamma_{\nS+\kS}\AS}
y_{\nS,\kS}}\to 0\;\:\Pas\;\;\text{as}\;\;\nS\to\pinf.
\end{equation}
\begin{itemize}
\item
Suppose that \ref{t:3i} holds. Then we deduce from \eqref{e:s2}
that $\varliminf\EE\norm{y_{\nS,\kS}
-\mathsf{J}_{\AS}^{}y_{\nS,\kS}}^2=0$. In turn, 
Fatou's lemma yields $\varliminf\norm{y_{\nS,\kS}
-\mathsf{J}_{\AS}^{}y_{\nS,\kS}}=0\;\Pas$ Now set
$\mathsf{T}=2\mathsf{J}_{\AS}-\Id$ and recall that it is
nonexpansive \cite[Corollary~23.11(ii)]{Livre1}. Therefore,
\begin{align}
(\forall\nnn)\quad
2\norm1{y_{\nS+1,\kS}
-\mathsf{J}_{\AS}^{}y_{\nS+1,\kS}}
&=\norm1{\mathsf{T}y_{\nS+1,\kS}-y_{\nS+1,\kS}}
\nonumber\\
&=\norm1{\mathsf{T}y_{\nS+1,\kS}-\mathsf{T}y_{\nS,\kS}
+(1-\lambda_{\nS}/2)(\mathsf{T}y_{\nS,\kS}-y_{\nS,\kS})}
\nonumber\\
&\leq\norm{y_{\nS+1,\kS}-y_{\nS,\kS}}
+(1-\lambda_{\nS}/2)\norm1{\mathsf{T}y_{\nS,\kS}-y_{\nS,\kS}}
\nonumber\\
&=(\lambda_{\nS}/2)\norm1{\mathsf{T}y_{\nS,\kS}-y_{\nS,\kS}}
+(1-\lambda_{\nS}/2)\norm1{\mathsf{T}y_{\nS,\kS}-y_{\nS,\kS}}
\nonumber\\
&=2\norm1{y_{\nS,\kS}-\mathsf{J}_{\AS}^{}y_{\nS,\kS}}\;\;\Pas,
\end{align}
which shows that 
$(\norm{y_{\nS,\kS}-\mathsf{J}_{\AS}^{}y_{\nS,\kS}})_{\nnn}$ 
decreases $\Pas$ Hence,
$\norm{y_{\nS,\kS}-\mathsf{J}_{\AS}^{}y_{\nS,\kS}}\to 0\;\Pas$
as $\nS\to\pinf$.
\item
Suppose that \ref{t:3iii} or \ref{t:3ii} holds. Then it follows 
from \eqref{e:s2} that 
\begin{equation}
\EE\sum_{\nnn}
\norm1{y_{\nS,\kS}
-\mathsf{J}_{\upgamma_{\nS+\kS}\AS}^{}y_{\nS,\kS}}^2
=\sum_{\nnn}
\EE\norm1{y_{\nS,\kS}
-\mathsf{J}_{\upgamma_{\nS+\kS}\AS}^{}y_{\nS,\kS}}^2
<\pinf.
\end{equation}
Thus $\sum_{\nnn}\norm{y_{\nS,\kS}
-\mathsf{J}_{\upgamma_{\nS+\kS}\AS}y_{\nS,\kS}}^2<\pinf\;\Pas$ and
hence $\norm{y_{\nS,\kS}-\mathsf{J}_{\upgamma_{\nS+\kS}\AS}
y_{\nS,\kS}}\to 0\;\Pas$ as $\nS\to\pinf$.
\end{itemize}
This establishes \eqref{e:s87}. On the other hand, let us note
that, under any of scenarios \ref{t:3i}--\ref{t:3ii},
\begin{equation}
\EE\sum_{\nnn}\abs{\lambda_{\nS}}\,\norm{e_{\nS}}
=\sum_{\nnn}\EE\brk1{\abs{\lambda_{\nS}}\,\norm{e_{\nS}}}
=\sum_{\nnn}\EE\abs{\lambda_{\nS}}\,\EE\norm{e_{\nS}}
\leq\sum_{\nnn}\sqrt{\EE\abs{\lambda_{\nS}}^2\EE\norm{e_{\nS}}^2}
<\pinf.
\end{equation}
Hence $\sum_{\nnn}\abs{\lambda_{\nS}}\,\norm{e_{\nS}}<\pinf\;\Pas$
Consequently, taking into account \eqref{e:p25}, \eqref{e:s5}, and
\eqref{e:s6}, we infer that, for every $\nnn\smallsetminus\{0\}$,
\begin{align}
\label{e:s4}
\norm1{x_{\nS+\kS}-y_{\nS,\kS}}
\leq\sum_{\jS=\kS}^{\nS+\kS-1}
\norm1{\lambda_{\jS}e_{\jS}}
\leq\sum_{\jS=\kS}^{\pinf}
\abs{\lambda_{\jS}}\,\norm{e_{\jS}}
&<\pinf\;\;\Pas
\end{align}
and
\begin{align}
\norm1{x_{\nS+\kS}-y_{\nS,\kS}}_{L^2(\upOmega,\FE,\PP;\HS)}
&\leq\sum_{\jS=\kS}^{\nS+\kS-1}
\norm1{\lambda_{\jS}e_{\jS}}_{L^2(\upOmega,\FE,\PP;\HS)}
\nonumber\\
&=\sum_{\jS=\kS}^{\nS+\kS-1}
\sqrt{\EE\bigl|\lambda_{\jS}\norm{e_{\jS}}\bigr|^2}
\nonumber\\
&=\sum_{\jS=\kS}^{\nS+\kS-1}
\sqrt{\EE|\lambda_{\jS}|^2\EE\norm{e_{\jS}}^2}
\nonumber\\
&\leq\sum_{\jS=\kS}^{\pinf}
\sqrt{\EE|\lambda_{\jS}|^2\EE\norm{e_{\jS}}^2}
\nonumber\\
&<\pinf
\label{e:s4'}
\end{align}
In turn, since $(\EE\norm{y_{\nS,\kS}}^2)_{\nnn}$ is bounded, so is
$(\EE\norm{x_{\nS}}^2)_{\nnn}$. Next, fix $\zS\in\zer\AS$. 
We derive from \eqref{e:GBeps}, \eqref{e:s1},
\eqref{e:s7}, the Cauchy--Schwarz inequality, and \eqref{e:s6} that
\begin{align}
\label{e:s9}
\hspace{-3mm}
\sum_{\nnn}\EE\varepsilon_{\nS}(\cdot,\zS)\EE\lambda_{\nS}
&=\sum_{\nnn}\EE\max
\Big\{0,\EC2{\scal{\zS-\mathsf{J}_{\upgamma_{\nS}\AS}x_{\nS}}
{e_{\nS}}
+\scal{e_{\nS}}{x_{\nS}-\mathsf{J}_{\upgamma_{\nS}\AS}x_{\nS}}
+\norm{e_{\nS}}^2}{\XX_{\nS}}\Big\}\EE\lambda_{\nS}
\nonumber\\
&\leq\sum_{\nnn}\brk2{
\sqrt{\EE\norm{\zS-\mathsf{J}_{\upgamma_{\nS}\AS}x_{\nS}}^2}
+\sqrt{\EE\norm{x_{\nS}-\mathsf{J}_{\upgamma_{\nS}\AS}x_{\nS}}^2}
+\sqrt{\EE\norm{e_{\nS}}^2}}\sqrt{\EE\norm{e_{\nS}}^2}\;
\EE\lambda_{\nS}
\nonumber\\
&\leq\sum_{\nnn}\brk2{
\sqrt{\EE\norm{\zS-x_{\nS}}^2}
+\sqrt{\EE\norm{(\Id-\mathsf{J}_{\upgamma_{\nS}\AS})x_{\nS}
-(\Id-\mathsf{J}_{\upgamma_{\nS}\AS})\zS}^2}
+\sqrt{\EE\norm{e_{\nS}}^2}}
\sqrt{\EE|\lambda_{\nS}|^2\EE\norm{e_{\nS}}^2}
\nonumber\\
&\leq\sum_{\nnn}\brk2{
2\sqrt{\EE\norm{x_{\nS}-\zS}^2}
+\sqrt{\EE\norm{e_{\nS}}^2}}
\sqrt{\EE|\lambda_{\nS}|^2\EE\norm{e_{\nS}}^2}
\nonumber\\
&<\pinf.
\end{align}
We conclude the proof using Theorem~\ref{t:5}\ref{t:5iv-}. 
\begin{itemize}
\item
Convergence under assumption \ref{t:3i} or \ref{t:3iii}:
In view of \eqref{e:s1}, let us show that
\begin{equation}
\label{e:s90}
\begin{cases}
x_{\nS}-w_{\nS}-e_{\nS}=
x_{\nS}-\mathsf{J}_{\upgamma_{\nS}\AS}^{}x_{\nS}
\weakly\mathsf{0}\;\:\Pas;\\
w_{\nS}+e_{\nS}-q_{\nS}=
\mathsf{J}_{\upgamma_{\nS}\AS}^{}x_{\nS}-x_{\nS}
\to\mathsf{0}\;\:\Pas;\\
w_{\nS}^*+e_{\nS}^*+\CS q_{\nS}=\upgamma_{\nS}^{-1}
\brk1{x_{\nS}-\mathsf{J}_{\upgamma_{\nS}\AS}^{}x_{\nS}}
\to\mathsf{0}\;\:\Pas 
\end{cases}
\end{equation}
By invoking \eqref{e:s6}, \eqref{e:s4}, and \eqref{e:s87}, we
obtain 
\begin{align}
\varlimsup_{\mS\to\pinf}\norm1{x_{\mS}
-\mathsf{J}_{\upgamma_{\mS}\AS}x_{\mS}}
&=\varlimsup_{\nS\to\pinf}\norm1{x_{\nS+\kS}
-\mathsf{J}_{\upgamma_{\nS+\kS}\AS}x_{\nS+\kS}}
\nonumber\\
&\leq\varlimsup_{\nS\to\pinf}\Big(
\norm{x_{\nS+\kS}-y_{\nS,\kS}}
+\norm1{\mathsf{J}_{\upgamma_{\nS+\kS}\AS}x_{\nS+\kS}
-\mathsf{J}_{\upgamma_{\nS+\kS}\AS}y_{\nS,\kS}}
+\norm1{y_{\nS,\kS}-\mathsf{J}_{\upgamma_{\nS+\kS}\AS}y_{\nS,\kS}}
\Big)
\nonumber\\
&\leq\varlimsup_{\nS\to\pinf}\Big(
2\norm{x_{\nS+\kS}-y_{\nS,\kS}}
+\norm1{y_{\nS,\kS}-\mathsf{J}_{\upgamma_{\nS+\kS}\AS}y_{\nS,\kS}}
\Big)
\nonumber\\
&\leq\varlimsup_{\nS\to\pinf}\Big(
2\sum_{\jS=\kS}^{\pinf}\norm1{\lambda_{\jS}e_{\jS}}
+\norm1{y_{\nS,\kS}-\mathsf{J}_{\upgamma_{\nS+\kS}\AS}y_{\nS,\kS}}
\Big)
\nonumber\\
&=2\sum_{\jS=\kS}^{\pinf}
|\lambda_{\jS}|\,\norm{e_{\jS}}+\lim_{\nS\to\pinf}
\norm1{y_{\nS,\kS}-\mathsf{J}_{\upgamma_{\nS+\kS}\AS}y_{\nS,\kS}}
\nonumber\\
&=2\sum_{\jS=\kS}^{\pinf}
|\lambda_{\jS}|\,\norm{e_{\jS}}\;\;\Pas
\label{e:s89}
\end{align}
Thus, upon taking the limit as $\kS\to\pinf$ in \eqref{e:s89}, we
obtain $\lim_{\mS\to\pinf}\norm{x_{\mS}
-\mathsf{J}_{\upgamma_{\mS}\AS}x_{\mS}}=0\;\Pas$
Hence, since $(\upgamma_{\nS})_{\nnn}$ is bounded away from $0$, 
\eqref{e:s90} holds. 
\item
Convergence under assumption \ref{t:3ii}:
Note that \eqref{e:s2} yields
$\sum_{\nnn}\upgamma_{\nS+\kS}^2\EE\norm{\upgamma_{\nS+\kS}^{-1}
(y_{\nS,\kS}-\mathsf{J}_{\upgamma_{\nS+\kS}\AS}^{}y_{\nS,\kS})}^2
<\pinf$, which forces
$\varliminf\norm{\upgamma_{\nS+\kS}^{-1}(y_{\nS,\kS}
-\mathsf{J}_{\upgamma_{\nS+\kS}\AS}^{}y_{\nS,\kS})}=0\;\Pas$
Upon invoking \cite[Proposition~23.22]{Livre1} and the
Cauchy--Schwarz inequality, we obtain, for every $\nnn$, 
\begin{align}
0&\leq\dfrac{1}{\upgamma_{\nS+1+\kS}}
\scal3{\mathsf{J}_{\upgamma_{\nS+\kS}\AS}y_{\nS,\kS}
-\mathsf{J}_{\upgamma_{\nS+1+\kS}\AS}y_{\nS+1,\kS}}
{\frac{y_{\nS,\kS}
-\mathsf{J}_{\upgamma_{\nS+\kS}\AS}^{}y_{\nS,\kS}}
{\upgamma_{\nS+\kS}}-\frac{y_{\nS+1,\kS}
-\mathsf{J}_{\upgamma_{\nS+\kS+1}\AS}^{}y_{\nS+1,\kS}}
{\upgamma_{\nS+1+\kS}}}
\nonumber\\[2mm]
&=\scal3{\dfrac{\mathsf{J}_{\upgamma_{\nS+\kS}\AS}y_{\nS,\kS}
-\mathsf{J}_{\upgamma_{\nS+1+\kS}\AS}y_{\nS+1,\kS}}
{\upgamma_{\nS+1+\kS}}}
{\frac{y_{\nS,\kS}
-\mathsf{J}_{\upgamma_{\nS+\kS}\AS}^{}y_{\nS,\kS}}
{\upgamma_{\nS+\kS}}
}
-\norm3{\frac{y_{\nS+1,\kS}
-\mathsf{J}_{\upgamma_{\nS+\kS+1}\AS}^{}y_{\nS+1,\kS}}
{\upgamma_{\nS+1+\kS}}}^2
\nonumber\\[2mm]
&\leq\norm3{\frac{y_{\nS+1,\kS}
-\mathsf{J}_{\upgamma_{\nS+\kS}\AS}^{}y_{\nS+1,\kS}}
{\upgamma_{\nS+1+\kS}}}
\brk4{\norm3{\frac{y_{\nS,\kS}
-\mathsf{J}_{\upgamma_{\nS+\kS}\AS}^{}y_{\nS,\kS}}
{\upgamma_{\nS+\kS}}}-\norm3{\frac{y_{\nS+1,\kS}
-\mathsf{J}_{\upgamma_{\nS+\kS+1}\AS}^{}y_{\nS+1,\kS}}
{\upgamma_{\nS+1+\kS}}}}\;\;\Pas
\end{align}
Hence, $(\norm{\upgamma_{\nS+\kS}^{-1}(y_{\nS,\kS}
-\mathsf{J}_{\upgamma_{\nS+\kS}\AS}^{}y_{\nS,\kS})})_{\nnn}$
decreases $\Pas$, which implies that 
${\upgamma_{\nS+\kS}^{-1}(y_{\nS,\kS}
-\mathsf{J}_{\upgamma_{\nS+\kS}\AS}^{}y_{\nS,\kS})}\to 0\;\Pas$
as $\nS\to\pinf$. Consequently, 
we deduce then from Theorem~\ref{t:5iv-} that, for every
$\kS\in\NN$, $(y_{\nS,\kS})_{\nnn}$ converges weakly $\Pas$ and
weakly in $L^2(\upOmega,\FE,\PP;\HS)$ to
some ($\zer\AS$)-valued random variable which we denote by 
$y_{\kS}$. In addition, we deduce from \eqref{e:p25}, \eqref{e:s5},
and \eqref{e:s6} that
\begin{equation}
(\forall\kS\in\NN)(\forall\nnn)\quad
\begin{cases}
\norm{y_{\nS,\kS+1}-y_{\nS+1,\kS}}
\leq\norm{e_{\kS}}\;\Pas;\\[1mm]
\norm{y_{\nS,\kS+1}-y_{\nS+1,\kS}}_{L^2(\upOmega,\FE,\PP;\HS)}
\leq\norm{e_{\kS}}_{L^2(\upOmega,\FE,\PP;\HS)}.
\end{cases}
\end{equation}
In turn, the weak lower semicontinuity of the norm and Fatou's
lemma imply that
\begin{equation}
\label{e:wls}
(\forall\kS\in\NN)\quad
\begin{cases}
\norm{y_{\kS+1}-y_{\kS}}
\leq\varliminf\:\norm1{y_{\nS,\kS+1}-y_{\nS+1,\kS}}
\leq\norm{e_{\kS}}\;\Pas;\\[1mm]
\EE\norm{y_{\kS+1}-y_{\kS}}^2
\leq\varliminf\EE\norm1{y_{\nS,\kS+1}-y_{\nS+1,\kS}}^2
\leq\EE\norm{e_{\kS}}^2.
\end{cases}
\end{equation}
Since $\sum_{\nnn}\norm{e_{\nS}}<\pinf\;\Pas$ and
$\sum_{\nnn}\sqrt{\EE\norm{e_{\nS}}^2}<\pinf$, \eqref{e:wls} shows
that $(y_{\kS})_{\kS\in\NN}$ is a Cauchy sequence both $\Pas$ and
in $L^2(\upOmega,\FE,\PP;\HS)$. Hence, we deduce from \eqref{e:s4},
\eqref{e:s4'}, and \eqref{e:wls} that there exists a 
($\zer\AS$)-valued random variable $y$ such that
\begin{equation}
\begin{cases}
x_{\nS+\kS}-y_{\nS,\kS}\to
0\;\:\Pas\;\text{and in}\;L^2(\upOmega,\FE,\PP;\HS)\;\text{as}\;
\nS\to\pinf\;\text{and}\;\kS\to\pinf;\\
\text{for every}\;\kS\in\NN,\; y_{\nS,\kS}-y_{\kS}\weakly
0\;\:\Pas\;\text{and in}\;L^2(\upOmega,\FE,\PP;\HS)\;\text{as}\;
\nS\to\pinf;\\
y_{\kS}-y\to 0\;\:\Pas\;\text{and in}\;L^2(\upOmega,\FE,\PP;\HS)\;
\;\text{as}\;\kS\to\pinf.
\end{cases}
\end{equation}
Thus, $x_{\nS+\kS}-y=x_{\nS+\kS}-y_{\nS,\kS}+y_{\nS,\kS}
-y_{\kS}+y_{\kS}-y\weakly
0\;\Pas\;\text{and in}\;L^2(\upOmega,\FE,\PP;\HS)\;\;
\text{as}\;\nS\to\pinf\;\text{and}\;\kS\to\pinf$. This
confirms that $(x_{\nS})_{\nnn}$ converges weakly $\Pas$ and weakly
in $L^2(\upOmega,\FE,\PP;\HS)$ to $y$.
\end{itemize}
\end{proof}

\begin{remark}
\label{r:71}
Here are a few commentaries on Theorem~\ref{t:3}.
\begin{enumerate}
\item
In the deterministic setting with $(\uplambda_{\nS})_{\nnn}$ in
$\left]0,2\right[$, Theorem~\ref{t:3}\ref{t:3i} follows from 
\cite[Theorem~2.1(i)(a)]{Joca09}, 
Theorem~\ref{t:3}\ref{t:3iii} was established in 
\cite[Theorem~3]{Ecks92},
and Theorem~\ref{t:3}\ref{t:3ii} was established 
in \cite[Remarque~14(a)]{Brez78}.
\item
In the case of deterministic relaxations 
$(\uplambda_{\nS})_{\nnn}$ in $\left]0,2\right[$ and constant
proximal parameters $(\upgamma_{\nS})_{\nnn}$, 
the almost sure weak convergence result in 
Theorem~\ref{t:3}\ref{t:3iii} follows from 
\cite[Proposition~5.1]{Siop15}.
\item
As discussed in \cite[Section~5]{Acnu24}, the deterministic
proximal point algorithm can be employed to solve equilibrium
problems beyond the simple inclusion $0\in\AS\xS$. It captures in
particular the method of partial inverses to split multi-operator
inclusions, problems involving resolvent compositions, and
the Chambolle--Pock algorithm. Stochasticity can be introduced in
these methods via Theorem~\ref{t:3}.
\end{enumerate}
\end{remark}

\section{Randomized block-iterative saddle projective splitting}
\label{sec:5}

\subsection{Problem setting}
We consider a highly structured composite multivariate primal-dual
inclusion problem introduced in \cite{Sadd22} and further studied
in \cite[Section~10]{Acnu24}. This model includes a mix of
set-valued, cocoercive, and Lipschitzian monotone operators, as
well as linear operators and various monotonicity-preserving
operations among them. Its multivariate structure captures problems
in areas such as domain decomposition methods \cite{Aldu23,Nume16},
game theory \cite{Joca22,Pave25},
mean field games \cite{Bri18b},
machine learning \cite{Bach12,Bric19},
network flow problems \cite{Buim22,Rock95},
%image recovery \cite{Bric09,Jmiv11},
%statistics \cite{Ejst20,Bien20},
neural networks \cite{Svva20},
and stochastic programming \cite{Buim25,Ecks25}.

\begin{problem}
\label{prob:1}
Let $(\HS_{\iS})_{\iii}$ and $(\GS_{\kS})_{\kkk}$
be finite families of Euclidean spaces with respective
direct sums $\HHS=\bigoplus_{\iii}\HS_{\iS}$ and
$\GGS=\bigoplus_{\kkk}\GS_{\kS}$.
Denote by $\boldsymbol{\xS}=(\xS_{\iS})_{\iii}$
a generic element in $\HHS$.
For every $\iS\in\II$ and every $\kS\in\KK$,
let $\sS_{\iS}^*\in\HS_{\iS}$, let $\rS_{\kS}\in\GS_{\kS}$,
and suppose that the following are satisfied:
\begin{enumerate}[label={[\alph*]}]
\item
\label{prob:1a}
$\AS_{\iS}\colon\HS_{\iS}\to 2^{\HS_{\iS}}$ is maximally monotone,
$\CS_{\iS}\colon\HS_{\iS}\to\HS_{\iS}$ is cocoercive with constant
$\upalpha_{\iS}^{\CC}\in\RPP$,
$\QS_{\iS}\colon\HS_{\iS}\to\HS_{\iS}$ is monotone and Lipschitzian
with constant $\upalpha_{\iS}^{\LL}\in\RP$, and 
$\RS_{\iS}\colon\HHS\to\HS_{\iS}$.
\item
\label{prob:1b}
$\BS_{\kS}^{\MM}\colon\GS_{\kS}\to 2^{\GS_{\kS}}$ is maximally 
monotone, $\BS_{\kS}^{\CC}\colon\GS_{\kS}\to\GS_{\kS}$ is 
cocoercive with constant $\upbeta_{\kS}^{\CC}\in\RPP$, and 
$\BS_{\kS}^{\LL}\colon\GS_{\kS}\to\GS_{\kS}$
is monotone and Lipschitzian with constant 
$\upbeta_{\kS}^{\LL}\in\RP$.
\item
\label{prob:1c}
$\DS_{\kS}^{\MM}\colon\GS_{\kS}\to 2^{\GS_{\kS}}$ is maximally 
monotone, $\DS_{\kS}^{\CC}\colon\GS_{\kS}\to\GS_{\kS}$ is 
cocoercive with constant $\updelta_{\kS}^{\CC}\in\RPP$, and 
$\DS_{\kS}^{\LL}\colon\GS_{\kS}\to\GS_{\kS}$
is monotone and Lipschitzian with constant
$\updelta_{\kS}^{\LL}\in\RP$.
\item
\label{prob:1d}
$\LS_{\kS\iS}\colon\HS_{\iS}\to\GS_{\kS}$ is linear.
\end{enumerate}
In addition, it is assumed that
\begin{enumerate}[resume,label={[\alph*]}]
\item
\label{prob:1e}
$\boldsymbol{\RS}\colon\HHS\to\HHS\colon
\boldsymbol{\xS}\mapsto(\RS_{\iS}\boldsymbol{\xS})_{\iii}$
is monotone and Lipschitzian with constant $\upchi\in\RP$.
\end{enumerate}
The objective is to solve the primal problem
\begin{multline}
\label{e:1p}
\text{find}\;\:\overline{\boldsymbol{\xS}}\in\HHS
\;\:\text{such that}\;\:(\forall\iS\in\II)\;\;
\sS_{\iS}^*\in\AS_{\iS}\overline{\xS}_{\iS}+\CS_{\iS}
\overline{\xS}_{\iS}+\QS_{\iS}\overline{\xS}_{\iS}
+\RS_{\iS}\overline{\boldsymbol{\xS}}\\
+\Sum_{\kkk}\LS_{\kS\iS}^*\Bigg(\Big(
\big(\BS_{\kS}^{\MM}+\BS_{\kS}^{\CC}+\BS_{\kS}^{\LL}\big)\infconv
\big(\DS_{\kS}^{\MM}+\DS_{\kS}^{\CC}+\DS_{\kS}^{\LL}\big)\Big)
\Bigg(\Sum_{\jS\in\mbox{\scriptsize\ttfamily{I}}}
\LS_{\kS\jS}\overline{\xS}_{\jS}-\rS_{\kS}\Bigg)\Bigg)
\end{multline}
and the associated dual problem
\begin{multline}
\label{e:1d}
\text{find}\;\:\overline{\boldsymbol{\vS}}^*\in\GGS
\;\:\text{such that}\;\:
(\exi\boldsymbol{\xS}\in\HHS)(\forall\iS\in\II)(\forall\kS\in\KK)\\
\begin{cases}
\sS^*_{\iS}-\Sum_{\jS\in\mbox{\scriptsize\ttfamily{K}}}
\LS_{\jS\iS}^*\overline{\vS}_{\jS}^*\in
\AS_{\iS}\xS_{\iS}+\CS_{\iS}\xS_{\iS}+\QS_{\iS}\xS_{\iS}
+\RS_{\iS}\boldsymbol{\xS};\\
\overline{\vS}_{\kS}^*\in
\Big(\big(\BS_{\kS}^{\MM}+\BS_{\kS}^{\CC}+\BS_{\kS}^{\LL}\big)
\infconv
\big(\DS_{\kS}^{\MM}+\DS_{\kS}^{\CC}+\DS_{\kS}^{\LL}\big)\Big)
\Bigg(\Sum_{\jS\in\mbox{\scriptsize\ttfamily{I}}}\LS_{\kS\jS}
\xS_{\jS}-\rS_{\kS}\Bigg).
\end{cases}
\end{multline}
Finally, $\mathscr{P}$ denotes the set of solutions to
\eqref{e:1p}, $\mathscr{D}$ denotes the set of solutions to
\eqref{e:1d}, and we set $\XXX=\HHS\oplus\GGS\oplus\GGS\oplus\GGS$.
\end{problem}

To deal with large size problems in which $\II$ and/or $\KK$ is
sizable, the deterministic block-iterative algorithm proposed in
\cite{Sadd22} has the ability to activate only subgroups of
coordinates and operators at each iteration instead of all of them
as in classical methods. We propose a stochastic version of this
block-iterative algorithm with almost sure convergence to a
solution of Problem~\ref{prob:1}. The convergence analysis will
rely on an application of Theorem~\ref{t:5} in $\XXX$ using the
following saddle formalism. 

\begin{definition}[{\cite[Definition~1]{Sadd22}}]
\label{d:S}
The \emph{saddle operator} associated with Problem~\ref{prob:1} is
\begin{align}
\label{e:saddle}
\sad\colon\XXX\to2^{\XXX}\colon
&(\boldsymbol{\xS},\boldsymbol{\yS},\boldsymbol{\zS},
\boldsymbol{\vS}^*)\mapsto\nonumber\\
&\hskip -12mm\Bigg(\bigtimes_{\iii}
\brk3{{-}\sS_{\iS}^*+\AS_{\iS}\xS_{\iS}+\CS_{\iS}\xS_{\iS}
+\QS_{\iS}\xS_{\iS}+\RS_{\iS}\boldsymbol{\xS}
+\sum_{\kkk}\LS^*_{\kS\iS}\vS^*_{\kS}},
\bigtimes_{\kkk}\brk1{\BS_{\kS}^{\MM}\yS_{\kS}
+\BS_{\kS}^{\CC}\yS_{\kS}+\BS_{\kS}^{\LL}\yS_{\kS}
-\vS_{\kS}^*},\nonumber\\
&\hskip -12mm\;\;\bigtimes_{\kkk}\brk1{\DS_{\kS}^{\MM}\zS_{\kS}
+\DS_{\kS}^{\CC}\zS_{\kS}+\DS_{\kS}^{\LL}\zS_{\kS}
-\vS_{\kS}^*},\bigtimes_{\kkk}\bigg\{
\rS_{\kS}+\yS_{\kS}+\zS_{\kS}-\sum_{\iii}\LS_{\kS\iS}\xS_{\iS}
\bigg\}~\Bigg),
\end{align}
and the \emph{saddle form} of Problem~\ref{prob:1} is to 
\begin{equation}
\label{e:sf}
\text{find}\;\:\overline{\bunder{\boldsymbol{\mathsf{x}}}}
\in\XXX\;\:\text{such that}\;\:\bunder{\boldsymbol{\mathsf{0}}}
\in\sad\overline{\bunder{\boldsymbol{\mathsf{x}}}}.
\end{equation}
\end{definition}

Item \ref{p:6ii} below asserts that finding a saddle point, i.e.,
solving \eqref{e:sf}, provides a solution to Problem~\ref{prob:1}.

\begin{proposition}[{\cite[Proposition~1]{Sadd22}}]
\label{p:6}
Consider the setting of Problem~\ref{prob:1} 
and Definition~\ref{d:S}. 
%Let  
%\begin{multline}
%\label{e:Z}
%\boldsymbol{\mathsf{Z}}=\Bigg\{
%(\overline{\boldsymbol{\xS}},\overline{\boldsymbol{\vS}}^*)\in
%\HHS\oplus\GGS ~\bigg|~
%(\forall\iS\in\II)(\forall\kS\in\KK)\quad
%\sS^*_{\iS}-\sum_{\jS\in\mbox{\scriptsize\ttfamily{K}}}
%\LS_{\jS\iS}^*\overline{\vS}_{\jS}^*\in
%\AS_{\iS}\overline{\xS}_{\iS}+\CS_{\iS}\overline{\xS}_{\iS}
%+\QS_{\iS}\overline{\xS}_{\iS}
%+\RS_{\iS}\overline{\boldsymbol{\xS}}
%\;\:\text{and}\\[-1mm]
%\sum_{\jS\in{\mbox{\scriptsize\ttfamily{I}}}}
%\LS_{\kS\jS}\overline{\xS}_{\jS}-\rS_{\kS}\in
%\big(\BS_{\kS}^{\MM}+\BS_{\kS}^{\CC}+\BS_{\kS}^{\LL}\big)^{-1}
%\overline{\vS}_{\kS}^*
%+\big(\DS_{\kS}^{\MM}+\DS_{\kS}^{\CC}+\DS_{\kS}^{\LL}\big)^{-1}
%\overline{\vS}_{\kS}^*
%\Bigg\}
%\end{multline}
%be the associated \emph{Kuhn--Tucker} set. 
Then the following hold:
\begin{enumerate}
\item
\label{p:6i}
$\sad$ is maximally monotone.
\item
\label{p:6ii}
Suppose that $\overline{\bunder{\boldsymbol{\xS}}}
=(\overline{\boldsymbol{\xS}},\overline{\boldsymbol{\yS}},
\overline{\boldsymbol{\zS}},\overline{\boldsymbol{\vS}}^*)
\in\zer\sad$. Then $(\overline{\boldsymbol{\xS}},
\overline{\boldsymbol{\vS}}^*)\in\mathscr{P}\times\mathscr{D}$.
\item
\label{p:6iv}
$\mathscr{D}\neq\emp$ $\Leftrightarrow$
$\zer\sad\neq\emp$ $\Rightarrow$ $\mathscr{P}\neq\emp$.
%\item
%\label{p:6v}
%Suppose that one of the following holds:
%\begin{enumerate}[label=\normalfont{[\alph*]}]
%\item
%\label{p:6va}
%$\II$ is a singleton.
%\item
%\label{p:6vb}
%For every $\kS\in\KK$, $(\BS_{\kS}^{\MM}+\BS_{\kS}^{\CC}
%+\BS_{\kS}^{\LL})
%\infconv(\DS_{\kS}^{\MM}+\DS_{\kS}^{\CC}+\DS_{\kS}^{\LL})$ is
%at most single-valued.
%\item
%\label{p:6vc}
%For every $\kS\in\KK$,
%$(\DS_{\kS}^{\MM}+\DS_{\kS}^{\CC}+\DS_{\kS}^{\LL})^{-1}$ is 
%strictly monotone.
%\item
%\label{p:6vd}
%$\II\subset\KK$, the operators
%$((\BS_{\kS}^{\MM}+\BS_{\kS}^{\CC}+\BS_{\kS}^{\LL})
%\infconv(\DS_{\kS}^{\MM}+\DS_{\kS}^{\CC}+
%\DS_{\kS}^{\LL}))_{\kkk\smallsetminus
%\mbox{\scriptsize\ttfamily{I}}}$
%are at most single-valued, and
%$(\forall\iS\in\II)(\forall\kS\in\KK)$
%$\kS\neq\iS$ $\Rightarrow$ $\LS_{\kS\iS}=0$.
%\end{enumerate}
%Then $\mathscr{P}\neq\emp$ $\Rightarrow$ 
%$\zer\sad\neq\emp$.
\end{enumerate}
\end{proposition}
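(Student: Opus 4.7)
The plan is to decompose $\sad=\boldsymbol{\mathcal{M}}+\boldsymbol{\mathcal{S}}$ on $\XXX$, where $\boldsymbol{\mathcal{M}}$ gathers the purely set-valued diagonal blocks built from $(\AS_{\iS})_{\iii}$, $(\BS^{\MM}_{\kS})_{\kkk}$, and $(\DS^{\MM}_{\kS})_{\kkk}$ (with zero on the dual block), while $\boldsymbol{\mathcal{S}}$ collects everything else: the diagonal single-valued pieces $\CS_{\iS}+\QS_{\iS}+\RS_{\iS}$, $\BS^{\CC}_{\kS}+\BS^{\LL}_{\kS}$, $\DS^{\CC}_{\kS}+\DS^{\LL}_{\kS}$, the translations by $-\sS^*_{\iS}$ and $\rS_{\kS}$, and the skew linear coupling induced by the $\LS_{\kS\iS}$'s that ties the four coordinates together.

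For~\ref{p:6i}, I would observe that $\boldsymbol{\mathcal{M}}$ is maximally monotone as a direct sum of maximally monotone operators, while $\boldsymbol{\mathcal{S}}$ is everywhere defined on $\XXX$, single-valued, and Lipschitzian. Its monotonicity follows by assembling the cocoercivity of $\CS_{\iS},\BS^{\CC}_{\kS},\DS^{\CC}_{\kS}$, the monotonicity of $\QS_{\iS},\BS^{\LL}_{\kS},\DS^{\LL}_{\kS},\boldsymbol{\RS}$, and the fact that the coupling block is skew by inspection of its action. Invoking \cite[Corollary~25.5(i)]{Livre1}---the same sum rule used in the proof of Theorem~\ref{t:5}\ref{t:5iv-}---then yields maximal monotonicity of $\sad$.

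For~\ref{p:6ii}, I would unwind $\bunder{\boldsymbol{0}}\in\sad\overline{\bunder{\boldsymbol{\xS}}}$ coordinate-wise. The second and third components deliver $\overline{\vS}^*_{\kS}\in(\BS^{\MM}_{\kS}+\BS^{\CC}_{\kS}+\BS^{\LL}_{\kS})\overline{\yS}_{\kS}$ and $\overline{\vS}^*_{\kS}\in(\DS^{\MM}_{\kS}+\DS^{\CC}_{\kS}+\DS^{\LL}_{\kS})\overline{\zS}_{\kS}$, so by the definition of the parallel sum,
\begin{equation}
\overline{\yS}_{\kS}+\overline{\zS}_{\kS}\in
\brk1{\brk1{\BS^{\MM}_{\kS}+\BS^{\CC}_{\kS}+\BS^{\LL}_{\kS}}\infconv
\brk1{\DS^{\MM}_{\kS}+\DS^{\CC}_{\kS}+\DS^{\LL}_{\kS}}}^{-1}\overline{\vS}^*_{\kS}.
\end{equation}
The fourth component forces $\overline{\yS}_{\kS}+\overline{\zS}_{\kS}=\sum_{\iii}\LS_{\kS\iS}\overline{\xS}_{\iS}-\rS_{\kS}$, which converts the previous inclusion into the second line of \eqref{e:1d}; feeding $\overline{\vS}^*_{\kS}$ back into the first component then reproduces \eqref{e:1p}, so $(\overline{\boldsymbol{\xS}},\overline{\boldsymbol{\vS}}^*)\in\mathscr{P}\times\mathscr{D}$.

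For~\ref{p:6iv}, the implications $\zer\sad\neq\emp\Rightarrow\mathscr{P}\neq\emp$ and $\zer\sad\neq\emp\Rightarrow\mathscr{D}\neq\emp$ are immediate from~\ref{p:6ii}. Conversely, given $\overline{\boldsymbol{\vS}}^*\in\mathscr{D}$ and a primal witness $\boldsymbol{\xS}\in\HHS$ as in \eqref{e:1d}, the definition of parallel sum supplies, for each $\kS\in\KK$, a splitting $\sum_{\iii}\LS_{\kS\iS}\xS_{\iS}-\rS_{\kS}=\overline{\yS}_{\kS}+\overline{\zS}_{\kS}$ with $\overline{\vS}^*_{\kS}\in(\BS^{\MM}_{\kS}+\BS^{\CC}_{\kS}+\BS^{\LL}_{\kS})\overline{\yS}_{\kS}$ and $\overline{\vS}^*_{\kS}\in(\DS^{\MM}_{\kS}+\DS^{\CC}_{\kS}+\DS^{\LL}_{\kS})\overline{\zS}_{\kS}$; then $(\boldsymbol{\xS},\overline{\boldsymbol{\yS}},\overline{\boldsymbol{\zS}},\overline{\boldsymbol{\vS}}^*)\in\zer\sad$. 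The main obstacle is~\ref{p:6i}: one must identify the skew coupling explicitly and verify monotonicity of $\boldsymbol{\mathcal{S}}$ carefully before the sum rule can be invoked, whereas \ref{p:6ii} and~\ref{p:6iv} then reduce to direct applications of the parallel-sum definition.
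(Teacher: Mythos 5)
The paper does not prove this proposition: it is imported verbatim by citation to \cite[Proposition~1]{Sadd22}, so there is no in-paper argument to compare against. Your reconstruction is correct and follows the standard route of the cited source: the splitting of $\sad$ into a diagonal maximally monotone set-valued part plus an everywhere-defined, single-valued, Lipschitzian monotone part (whose coupling block is indeed skew, as a direct computation of $\scal{\bunder{\boldsymbol{\xS}}}{\bunder{\boldsymbol{\mathsf{U}}}\bunder{\boldsymbol{\xS}}}=0$ confirms), the sum rule \cite[Corollary~25.5(i)]{Livre1} for \ref{p:6i}, and the coordinate-wise unwinding of $\bunder{\boldsymbol{\mathsf{0}}}\in\sad\overline{\bunder{\boldsymbol{\xS}}}$ via the identity $(\BS\infconv\DS)^{-1}=\BS^{-1}+\DS^{-1}$ for \ref{p:6ii} and \ref{p:6iv}. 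The only cosmetic difference from the decomposition the paper actually uses later (namely $\sad=\bunder{\boldsymbol{\mathsf{W}}}+\bunder{\boldsymbol{\mathsf{C}}}$ in \eqref{e:1179}--\eqref{e:2796}, which isolates only the cocoercive terms) is immaterial here, since either grouping delivers maximal monotonicity of the sum.
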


To use Theorem~\ref{t:5}, we decompose the saddle operator 
$\sad$ of \eqref{e:saddle} as the sum of 
\begin{align}
\label{e:1179}
\bunder{\boldsymbol{\mathsf{W}}}\colon\XXX\to2^{\XXX}\colon
(\boldsymbol{\xS},\boldsymbol{\yS},\boldsymbol{\zS},
\boldsymbol{\vS}^*)
\mapsto%\nonumber\\
&\Bigg(
\bigtimes_{\iii}\bigg({-}\sS_{\iS}^*+\AS_{\iS}\xS_{\iS}
+\QS_{\iS}\xS_{\iS}+\RS_{\iS}\boldsymbol{\xS}
+\sum_{\kkk}\LS^*_{\kS\iS}\vS^*_{\kS}\bigg),
\bigtimes_{\kkk}\big(\BS_{\kS}^{\MM}\yS_{\kS}
+\BS_{\kS}^{\LL}\yS_{\kS}-\vS_{\kS}^*\big),\nonumber\\
&\;\;\bigtimes_{\kkk}\big(\DS_{\kS}^{\MM}\zS_{\kS}
+\DS_{\kS}^{\LL}\zS_{\kS}-\vS_{\kS}^*\big),
\bigtimes_{\kkk}\bigg\{
\rS_{\kS}+\yS_{\kS}+\zS_{\kS}-\sum_{\iii}\LS_{\kS\iS}\xS_{\iS}
\bigg\}~\Bigg)
\end{align}
and
\begin{equation}
\label{e:2796}
\bunder{\boldsymbol{\mathsf{C}}}\colon\XXX\to\XXX\colon
(\boldsymbol{\xS},\boldsymbol{\yS},\boldsymbol{\zS},
\boldsymbol{\vS}^*)
\mapsto\brk2{
\big(\CS_{\iS}\xS_{\iS}\big)_{\iii},
\big(\BS_{\kS}^{\CC}\yS_{\kS}\big)_{\kkk},
\big(\DS_{\kS}^{\CC}\zS_{\kS}\big)_{\kkk},\boldsymbol{\mathsf{0}}}.
\end{equation}
As seen in \cite[Proposition~2(ii)--(iii)]{Sadd22},
$\bunder{\boldsymbol{\mathsf{W}}}$ is maximally monotone and 
$\bunder{\boldsymbol{\mathsf{C}}}$ is $\upalpha$-cocoercive
with $\upalpha=\min\{\upalpha_{\iS}^{\CC},\upbeta_{\kS}^{\CC},
\updelta_{\kS}^{\CC}\}_{\iii,\kkk}$. This confirms that
\eqref{e:sf} fits the framework described in
Problem~\ref{prob:19}.

\subsection{Algorithm and convergence}
The following assumptions regulate the way in which the coordinates
and the sets are randomly activated over the course of the
iterations. 

\begin{assumption}
\label{a:1}
$\II$ and $\KK$ are nonempty finite sets,
$(\uppi_{\iS})_{\iii}$ and
$(\upzeta_{\kS})_{\kkk}$ are in $\left]0,1\right]$, and 
$\mathsf{N}\in\NN\smallsetminus\{0\}$. $(I_{\nS})_{\nnn}$ are 
nonempty sets composed of elements randomly taken in $\II$ and 
$(K_{\nS})_{\nnn}$ are nonempty sets composed of elements randomly 
taken in $\KK$. Further, for every finite collection of positive
integers $\nS_1,\ldots,\nS_{\mathsf{m}}$,
\begin{equation}
\begin{cases}
(\forall\iS\in\II)\quad
\PP\brk3{\displaystyle\bigcap_{\jS=1}^{\mathsf{m}}\brk[s]{\iS\in 
I_{\nS_{\jS}}}}
=\displaystyle\prod_{\jS=1}^{\mathsf{m}}
\PP\brk1{[\iS\in I_{\nS_{\jS}}]};\\[3mm]
(\forall\kS\in\KK)\quad
\PP\brk3{\displaystyle\bigcap_{\jS=1}^{\mathsf{m}}\brk[s]{\kS\in 
K_{\nS_{\jS}}}}
=\displaystyle\prod_{\jS=1}^{\mathsf{m}}
\PP\brk1{[\kS\in K_{\nS_{\jS}}]}.
\end{cases}
\end{equation}
Moreover, $I_0=\II$, $K_0=\KK$, and
\begin{equation}
\label{e:2393}
\quad(\forall\nnn)\;\;
\begin{cases}
\displaystyle(\forall\iS\in\II)\;\;
\PP\brk3{\brk[s]3{\iS\in
\bigcup_{\jS=\nS}^{\nS+\mathsf{N}-1}I_{\jS}}}
\geq\uppi_{\iS};\\
\displaystyle(\forall\kS\in\KK)\;\;
\PP\brk3{\brk[s]3{\kS\in
\bigcup_{\jS=\nS}^{\nS+\mathsf{N}-1}K_{\jS}}}
\geq\upzeta_{\kS}.
\end{cases}
\end{equation}
\end{assumption}

\begin{example}\
\label{ex:u}
\begin{enumerate}
\item
\label{ex:ui}
The (deterministic) rule of \cite[Assumption~2]{Sadd22} satisfies
Assumption~\ref{a:1} by setting, for every $\iS\in\II$ and every 
$\kS\in\KK$, $\uppi_{\iS}=1$ and $\upzeta_{\kS}=1$.
\item
Set, for every $\nnn$, $I_{\nS}=\{i_{\nS}\}$ and
$K_{\nS}=\{k_{\nS}\}$, where $(i_{\nS})_{\nnn}$ are i.i.d. random
variables uniformly distributed on $\II$ and $(k_{\nS})_{\nnn}$ are
i.i.d. random variables uniformly distributed on $\KK$. This rule
satisfies Assumption~\ref{a:1} for $\mathsf{N}=1$,
$\uppi_{\iS}\equiv 1/\card\II$, and $\upzeta_{\kS}\equiv
1/\card\KK$.
\end{enumerate}
\end{example}

\begin{proposition}
\label{p:4}
Let $\II$ be a nonempty finite set and let $(I_{\nS})_{\nnn}$ be
nonempty sets composed of elements randomly taken in $\II$. Suppose 
that $I_0=\II$, and that $\iS\in\II$ is such that
$(\brk[s]{\iS\in I_{\nS}})_{\nnn}$ is an independent sequence in
$\FE$ that satisfies
\begin{equation}
\label{e:53681}
(\exi\mathsf{N}\in\NN\smallsetminus\{0\})
(\exi\uppi_{\iS}\in\left]0,1\right])(\forall\nnn)\quad
\PP\brk3{
\brk[s]3{\iS\in\bigcup_{\jS=\nS}^{\nS+\mathsf{N}-1}I_{\jS}}}
\geq\uppi_{\iS}.
\end{equation}
Set, for every $\nnn$, $\vartheta_{\iS}(\nS)=
\max\menge{\jS\in\NN}{\jS\leq\nS\;\text{and}\;\iS\in I_{\jS}}$.
Further, let $(x_{\nS})_{\nnn}$ be a sequence
in $L^\mathsf{2}(\upOmega,\FE,\PP;\HS)$ such that
$\sum_{\nnn}\EE\|x_{\nS+1}-x_{\nS}\|^2<\pinf\;\Pas$
Then $x_{\vartheta_{\iS}(\nS)}^{}-x_{\nS}^{}\to 0$ in 
$L^1(\upOmega,\FE,\PP;\HS)$.
\end{proposition}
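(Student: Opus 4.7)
My plan is to dominate $\EE\|x_{\vartheta_{\iS}(\nS)}-x_{\nS}\|$ by a discrete convolution of a geometrically decaying sequence with the $\ell^{2}$-sequence $a_{\jS}=\sqrt{\EE\|x_{\jS+1}-x_{\jS}\|^{2}}$, and then verify that this convolution vanishes at infinity. Since $\vartheta_{\iS}(\nS)\leq\nS$ by construction, the telescoping identity $x_{\vartheta_{\iS}(\nS)}-x_{\nS}=-\sum_{\jS=\vartheta_{\iS}(\nS)}^{\nS-1}(x_{\jS+1}-x_{\jS})$ and the triangle inequality yield
\begin{equation*}
\|x_{\vartheta_{\iS}(\nS)}-x_{\nS}\|\leq\sum_{\jS=0}^{\nS-1}\mathsf{1}_{[\vartheta_{\iS}(\nS)\leq\jS]}\,\|x_{\jS+1}-x_{\jS}\|.
\end{equation*}
Taking expectations and applying the Cauchy--Schwarz inequality termwise, I obtain
\begin{equation*}
\EE\|x_{\vartheta_{\iS}(\nS)}-x_{\nS}\|\leq\sum_{\jS=0}^{\nS-1}\sqrt{\PP\bigl([\vartheta_{\iS}(\nS)\leq\jS]\bigr)}\,a_{\jS}.
\end{equation*}
Crucially, this step requires no independence between $(x_{\nS})_{\nnn}$ and $(I_{\nS})_{\nnn}$.

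Next I would extract geometric decay for the probability factor. The event $[\vartheta_{\iS}(\nS)\leq\jS]$ coincides with $\bigcap_{\mathsf{k}=\jS+1}^{\nS}[\iS\notin I_{\mathsf{k}}]$. Partitioning $\{\jS+1,\dots,\nS\}$ into $\lfloor(\nS-\jS)/\mathsf{N}\rfloor$ consecutive non-overlapping windows of length $\mathsf{N}$ (and discarding the remainder), invoking the independence of the Bernoulli indicators $(\mathsf{1}_{[\iS\in I_{\nS}]})_{\nnn}$ to factorize the probability across windows, and applying hypothesis \eqref{e:53681} on each window, I obtain
\begin{equation*}
\PP\bigl([\vartheta_{\iS}(\nS)\leq\jS]\bigr)\leq(1-\uppi_{\iS})^{\lfloor(\nS-\jS)/\mathsf{N}\rfloor}.
\end{equation*}
This is the principal obstacle in the proof: converting the uniform single-window lower bound $\uppi_{\iS}$ into decay exponential in the gap $\nS-\jS$, a step that essentially consumes the probabilistic hypotheses.

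Setting $c_{\mS}=(1-\uppi_{\iS})^{\lfloor\mS/\mathsf{N}\rfloor/2}$, the previous two displays combine into the convolution bound
\begin{equation*}
\EE\|x_{\vartheta_{\iS}(\nS)}-x_{\nS}\|\leq\sum_{\jS=0}^{\nS-1}c_{\nS-\jS}\,a_{\jS}.
\end{equation*}
Here $(c_{\mS})$ decays geometrically and therefore lies in $\ell^{1}$, while the assumption $\sum_{\jS}a_{\jS}^{2}<\pinf$ forces $a_{\jS}\to 0$ and hence $a\in\ell^{\infty}$. A standard tail-splitting argument then finishes the proof: given $\upvarepsilon\in\RPP$, pick $\mathsf{K}\in\NN$ so that $\|a\|_{\ell^{\infty}}\sum_{\mS>\mathsf{K}}c_{\mS}<\upvarepsilon$, and then use $a_{\jS}\to 0$ to bound the head $\sum_{\mS=1}^{\mathsf{K}}c_{\mS}a_{\nS-\mS}$ by $\upvarepsilon$ for all sufficiently large $\nS$. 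This shows that the right-hand side above tends to $0$, establishing the asserted convergence in $L^{1}(\upOmega,\FE,\PP;\HS)$.
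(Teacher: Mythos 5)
Your proof is correct, and it takes a genuinely different route from the paper's. The paper's argument proceeds in three steps: it first shows that $\vartheta_{\iS}(\nS)\to\pinf$ $\Pas$, so that by dominated convergence $\EE\sum_{\jS\geq\vartheta_{\iS}(\nS)}\|x_{\jS+1}-x_{\jS}\|^2\to 0$; it then carries out a lengthy explicit computation to establish $\varlimsup_{\nS}\EE\brk1{\nS-\vartheta_{\iS}(\nS)}<\pinf$; and it finally combines the two via $\EE\|x_{\nS}-x_{\vartheta_{\iS}(\nS)}\|\leq\sqrt{1+\EE(\nS-\vartheta_{\iS}(\nS))}\sqrt{\EE\sum_{\jS\geq\vartheta_{\iS}(\nS)}\|x_{\jS+1}-x_{\jS}\|^2}$. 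You instead apply Cauchy--Schwarz termwise to decouple the random index from the increments, reduce the probabilistic content to the single geometric estimate $\PP([\vartheta_{\iS}(\nS)\leq\jS])\leq(1-\uppi_{\iS})^{\lfloor(\nS-\jS)/\mathsf{N}\rfloor}$ (which consumes the independence hypothesis and \eqref{e:53681} in exactly the same way the paper does, just organized as a product over windows rather than as a bound on $\EE(\nS-\vartheta_{\iS}(\nS))$), and conclude with a standard $\ell^1$-against-$c_0$ convolution argument. Your route is shorter, avoids the expected-gap computation entirely, and in fact only uses $\EE\|x_{\nS+1}-x_{\nS}\|^2\to 0$ rather than summability of that sequence, so it proves a slightly stronger statement; the paper's version yields as by-products the facts that $\vartheta_{\iS}(\nS)\to\pinf$ $\Pas$ and that the expected activation gap is uniformly bounded, which are quantitatively informative but not reused elsewhere. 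Two cosmetic points: you should note (as the paper does via its Lemma~\ref{l:101}) that $x_{\vartheta_{\iS}(\nS)}$ is measurable, although your pointwise domination then makes its integrability automatic; and in the degenerate case $\uppi_{\iS}=1$ your sequence $(c_{\mS})_{\mS\geq 1}$ is eventually zero, so it still lies in $\ell^1$ and the argument goes through unchanged.
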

\begin{proof}
Note that $(\forall\nnn)$
$\vartheta_{\iS}(\nS)\in\{0,\ldots,\nS\}\;\Pas$ Hence,
Lemma~\ref{l:101} ensures that, for every $\nnn$,
$x_{\vartheta_{\iS}(\nS)}^{}\in L^2(\upOmega,\FE,\PP;\HS)$.
On the other hand, it follows from the independence condition and
\eqref{e:53681} that
\begin{align}
(\forall\nnn)\quad
\PP\brk3{\brk[s]3{\iS\not\in\bigcup_{\jS=\nS}^{\pinf}I_{\jS}}}
&=\PP\brk3{\bigcap_{\jS=\nS}^{\pinf}
\brk[s]3{\iS\in\complement I_{\jS}}}
\nonumber\\
&=\PP\brk3{\lim_{0<\mathsf{m}\to\pinf}
\bigcap_{\jS=\nS}^{\nS+\mathsf{m}\mathsf{N}-1}
\brk[s]3{\iS\in\complement I_{\jS}}}
\nonumber\\
&=\lim_{0<\mathsf{m}\to\pinf}\PP\brk3{
\bigcap_{\jS=\nS}^{\nS+\mathsf{m}\mathsf{N}-1}
\brk[s]3{\iS\in\complement I_{\jS}}}
\nonumber\\
&=\lim_{0<\mathsf{m}\to\pinf}
\prod_{\kS=0}^{\mathsf{m}-1}\PP\brk3{
\bigcap_{\jS=\nS+\kS\mathsf{N}}^{\nS+(\kS+1)\mathsf{N}-1}
\brk[s]3{\iS\in\complement I_{\jS}}}
\nonumber\\
&=\lim_{0<\mathsf{m}\to\pinf}
\prod_{\kS=0}^{\mathsf{m}-1}\PP\brk3{\complement
\brk[s]3{\iS\in\bigcup_{\jS=\nS+\kS
\mathsf{N}}^{(\nS+\kS\mathsf{N})+\mathsf{N}-1}I_{\jS}}}
\nonumber\\
&\leq\lim_{0<\mathsf{m}\to\pinf}(1-\uppi_{\iS})^{\mathsf{m}}
\nonumber\\
&=0.
\end{align}
Therefore $\vartheta_{\iS}(\nS)\to\pinf\;\Pas$ as $\nS\to\pinf$
and, since 
$\sum_{\nnn}\norm{x_{\nS+1}-x_{\nS}}^2<\pinf\;\Pas$, we have
$\sum_{\jS\geq\vartheta_{\iS}(\nS)}\norm{x_{\jS+1}-x_{\jS}}^2
\downarrow 0\;\Pas$ as $\nS\to\pinf$. Thus, 
\begin{equation}
(\forall\nnn)\quad 0
\leq\sum_{\jS\geq\vartheta_{\iS}(\nS)}\norm{x_{\jS+1}-x_{\jS}}^2
\leq\sum_{\jjj}\norm{x_{\jS+1}-x_{\jS}}^2\in
L^1(\upOmega,\FE,\PP;\RR),
\end{equation}
from which we deduce via \cite[Theorem~2.6.1(b)]{Shir16} that
$\EE\sum_{\jS\geq\vartheta_{\iS}(\nS)}
\norm{x_{\jS+1}-x_{\jS}}^2\to 0$ as $\nS\to\pinf$. 
On the other hand, let $\nnn$ and $\mathsf{m}\in\NN$ be such
that $\mathsf{m}\mathsf{N}\leq\nS<(\mathsf{m}+1)\mathsf{N}$. Then
\begin{align}
&\EE\brk1{\nS-\vartheta_{\iS}(\nS)}\nonumber\\
&\quad=\sum_{\lS=0}^{\nS-1}(\nS-\lS)
\PP\brk1{\brk[s]1{\iS\in I_{\lS}}}
\PP\brk3{\brk[s]3{\iS\not\in\bigcup_{\jS=\lS+1}^{\nS}I_{\jS}}}
\nonumber\\
&\quad\leq\sum_{\lS=0}^{\nS-1}(\nS-\lS)
\PP\brk3{\brk[s]3{\iS\not\in\bigcup_{\jS=\lS+1}^{\nS}I_{\jS}}}
\nonumber\\
&\quad\leq\mathsf{N}(\nS-\mathsf{m}\mathsf{N})
+\sum_{\kS=0}^{\mathsf{m}-1}
\sum_{\lS=\kS\mathsf{N}}^{(\kS+1)\mathsf{N}-1}(\nS-\lS)
\PP\brk3{\brk[s]3{\iS\not\in\bigcup_{\jS=\lS+1}^{\nS}I_{\jS}}}
\nonumber\\
&\quad\leq\mathsf{N}^2+\sum_{\kS=0}^{\mathsf{m}-1}
\sum_{\lS=\kS\mathsf{N}}^{(\kS+1)\mathsf{N}-1}(\nS-\lS)
\PP\brk3{\brk[s]3{\iS\not\in
\bigcup_{\jS=(\kS+1)\mathsf{N}}^{\mathsf{m}\mathsf{N}-1}I_{\jS}}}
\nonumber\\
&\quad\leq\mathsf{N}^2
+\sum_{\kS=0}^{\mathsf{m}-1}(\nS-\kS\mathsf{N})
\sum_{\lS=\kS\mathsf{N}}^{(\kS+1)\mathsf{N}-1}
(1-\uppi_{\iS})^{\mathsf{m}-\kS-1}\nonumber\\
&\quad=\mathsf{N}^2+
\sum_{\kS=0}^{\mathsf{m}-1}(\nS-\kS\mathsf{N})
\mathsf{N}(1-\uppi_{\iS})^{\mathsf{m}-\kS-1}\nonumber\\
&\quad\leq\mathsf{N}^2
+\sum_{\kS=0}^{\mathsf{m}-1}\brk1{(\mathsf{m}+1)\mathsf{N}
-\kS\mathsf{N}}\mathsf{N}(1-\uppi_{\iS})^{\mathsf{m}-\kS-1}
\nonumber\\
&\quad=\mathsf{N}^2
\brk3{1+\sum_{\kS=0}^{\mathsf{m}-1}\brk1{\mathsf{m}+1
-\kS}(1-\uppi_{\iS})^{\mathsf{m}-\kS-1}}
\nonumber\\
&\quad=\mathsf{N}^2\brk3{1+\sum_{\lS=0}^{\mathsf{m}-1}
\brk1{\lS+2}(1-\uppi_{\iS})^{\lS}}
\nonumber\\
&\quad=\mathsf{N}^2\brk3{1+\sum_{\lS=0}^{\mathsf{m}-1}
\lS(1-\uppi_{\iS})^{\lS}+\sum_{\lS=0}^{\mathsf{m}-1}
2(1-\uppi_{\iS})^{\lS}}
\nonumber\\
&\quad=\mathsf{N}^2\brk3{1+(1-\uppi_{\iS})
\dfrac{1-\mathsf{m}(1-\uppi_{\iS})^{\mathsf{m}-1}
+(\mathsf{m}-1)(1-\uppi_{\iS})^\mathsf{m}}{\uppi_{\iS}^2}
+2\dfrac{1-(1-\uppi_{\iS})^\mathsf{m}}
{\uppi_{\iS}}}
\nonumber\\
&\quad=\mathsf{N}^2\brk3{1+
\dfrac{1-\uppi_{\iS}-\mathsf{m}(1-\uppi_{\iS})^{\mathsf{m}}
+(\mathsf{m}-1)(1-\uppi_{\iS})^{\mathsf{m}+1}}{\uppi_{\iS}^2}
+\dfrac{2\uppi_{\iS}
+2(1-\uppi_{\iS})^{\mathsf{m}+1}-2(1-\uppi_{\iS})^\mathsf{m}}
{\uppi_{\iS}^2}}
\nonumber\\
&\quad=\mathsf{N}^2\brk3{1
+\dfrac{(\mathsf{m}+1)(1-\uppi_{\iS})^{\mathsf{m}+1}-(\mathsf{m}+2)
(1-\uppi_{\iS})^{\mathsf{m}}+1+\uppi_{\iS}}{\uppi_{\iS}^2}},
\end{align}
which shows that $\varlimsup\EE(\nS-\vartheta_{\iS}(\nS))
\leq\mathsf{N}^2(1+(1+\uppi_{\iS})/\uppi_{\iS}^2)<\pinf$.
Thus,
$\EE\norm{x_{\nS}^{}-x_{\vartheta_{\iS}(\nS)}^{}}
\leq\EE\sum_{\jS=\vartheta_{\iS}(\nS)}^{\nS}
\norm{x_{\jS+1}-x_{\jS}}
\leq\EE\brk{\sqrt{\nS+1-\vartheta_{\iS}(\nS)}
\;\sqrt{\sum_{\jS=\vartheta_{\iS}(\nS)}^{\nS}
\norm{x_{\jS+1}-x_{\jS}}^2}\,}
\leq\sqrt{1+\EE\brk1{\nS-\vartheta_{\iS}(\nS)}}
\;\sqrt{\EE\sum_{\jS=\vartheta_{\iS}(\nS)}^{\pinf}
\norm{x_{\jS+1}-x_{\jS}}^2}
\to 0$.
This confirms that $x_{\vartheta_{\iS}(\nS)}^{}-x_{\nS}^{}\to 0$ in
$L^1(\upOmega,\FE,\PP;\HS)$.
\end{proof}

\begin{assumption}
\label{a:2}
In the setting of Problem~\ref{prob:1}, set
$\upalpha=\min\{\upalpha_{\iS}^{\CC},\upbeta_{\kS}^{\CC},
\updelta_{\kS}^{\CC}\}_{\iii,\kkk}$, and 
let $\upsigma\in\RPP$ and $\upvarepsilon\in\zeroun$ be such that
$\upsigma>1/(4\upalpha)$ and 
$1/\upvarepsilon>\max\{\upalpha_{\iS}^{\LL}+\upchi+\upsigma,
\upbeta_{\kS}^{\LL}+\upsigma,\updelta_{\kS}^{\LL}+\upsigma
\}_{\iii,\kkk}$,
and suppose that the following are satisfied:
\begin{enumerate}[label={[\alph*]}]
\item
\label{a:2ii}
For every $\iS\in\II$ and every $\nnn$, $\upgamma_{\iS,\nS}
\in\left[\upvarepsilon,1/(\upalpha_{\iS}^{\LL}+\upchi+\upsigma)
\right]$.
\item
\label{a:2iii}
For every $\kS\in\KK$ and every $\nnn$, 
$\upmu_{\kS,\nS}\in\left[\upvarepsilon,1/(\upbeta_{\kS}^{\LL}
+\upsigma)\right]$,
$\upnu_{\kS,\nS}\in\left[\upvarepsilon,1/(\updelta_{\kS}^{\LL}
+\upsigma)\right]$, 
and 
$\upsigma_{\kS,\nS}\in\left[\upvarepsilon,1/\upvarepsilon\right]$.
\item
\label{a:2iv}
For every $\iS\in\II$, $x_{\iS,0}\in
L^2(\upOmega,\FE,\PP;\HS_{\iS})$ and, for every $\kS\in\KK$,
$\{y_{\kS,0},z_{\kS,0},v_{\kS,0}^*\}\subset 
L^2(\upOmega,\FE,\PP;\GS_{\kS})$.
\end{enumerate}
\end{assumption}

We now introduce our stochastic block-iterative algorithm. It
differs from that of \cite{Sadd22} in that the selection of the
blocks of variables and operators to be activated at each iteration
is random, and so is the relaxation strategy. In addition, the
relaxation parameters need not be bounded by 2.

\begin{algorithm}
\label{algo:1}
Consider the setting of Problem~\ref{prob:1} and suppose that
Assumptions~\ref{a:1} and \ref{a:2} are in force. Let
$\uprho\in[2,\pinf[$ and iterate
\begin{equation}
\label{e:long1}
\begin{array}{l}
\text{for}\;\nS=0,1,\ldots\\
\left\lfloor
\begin{array}{l}
\text{for every}\;\iS\in I_{\nS}\\
\left\lfloor
\begin{array}{l}
l_{\iS,\nS}^*=\QS_{\iS}x_{\iS,\nS}
+\RS_{\iS}\boldsymbol{x}_{\nS}
+\sum_{\kkk}\LS_{\kS\iS}^*v_{\kS,\nS}^*;\\
a_{\iS,\nS}=\mathsf{J}_{\upgamma_{\iS,\nS}\AS_{\iS}}\big(
x_{\iS,\nS}+\upgamma_{\iS,\nS}(\sS_{\iS}^*-l_{\iS,\nS}^*
-\CS_{\iS}x_{\iS,\nS})\big);\\
a_{\iS,\nS}^*=\upgamma_{\iS,\nS}^{-1}(x_{\iS,\nS}
-a_{\iS,\nS})-l_{\iS,\nS}^*+\QS_{\iS}a_{\iS,\nS};\\
\xi_{\iS,\nS}=\|a_{\iS,\nS}-x_{\iS,\nS}\|^2;
\end{array}
\right.\\
\text{for every}\;\iS\in\II\smallsetminus I_{\nS}\\
\left\lfloor
\begin{array}{l}
a_{\iS,\nS}=a_{\iS,\nS-1};\;a_{\iS,\nS}^*=a_{\iS,\nS-1}^*;\;
\xi_{\iS,\nS}=\xi_{\iS,\nS-1};\\
\end{array}
\right.\\
\text{for every}\;\kS\in K_{\nS}\\
\left\lfloor
\begin{array}{l}
u_{\kS,\nS}^*=v_{\kS,\nS}^*-\BS_{\kS}^{\LL}y_{\kS,\nS};\\
w_{\kS,\nS}^*=v_{\kS,\nS}^*-\DS_{\kS}^{\LL}z_{\kS,\nS};\\
b_{\kS,\nS}=\mathsf{J}_{\upmu_{\kS,\nS}\BS_{\kS}^{\MM}}
\big(y_{\kS,\nS}
+\upmu_{\kS,\nS}(u_{\kS,\nS}^*-\BS_{\kS}^{\CC}y_{\kS,\nS})\big);\\
d_{\kS,\nS}=\mathsf{J}_{\upnu_{\kS,\nS}\DS_{\kS}^{\MM}}
\big(z_{\kS,\nS}
+\upnu_{\kS,\nS}(w_{\kS,\nS}^*-\DS_{\kS}^{\CC}z_{\kS,\nS})\big);\\
e_{\kS,\nS}^*=\upsigma_{\kS,\nS}\big(
\sum_{\iii}\LS_{\kS\iS}x_{\iS,\nS}
-y_{\kS,\nS}-z_{\kS,\nS}-\rS_{\kS}\big)+v_{\kS,\nS}^*;\\
q_{\kS,\nS}^*=\upmu_{\kS,\nS}^{-1}(y_{\kS,\nS}-b_{\kS,\nS})
+u_{\kS,\nS}^*+\BS_{\kS}^{\LL}b_{\kS,\nS}-e_{\kS,\nS}^*;\\
t_{\kS,\nS}^*=\upnu_{\kS,\nS}^{-1}(z_{\kS,\nS}-d_{\kS,\nS})
+w_{\kS,\nS}^*+\DS_{\kS}^{\LL}d_{\kS,\nS}-e_{\kS,\nS}^*;\\
\eta_{\kS,\nS}=\|b_{\kS,\nS}-y_{\kS,\nS}\|^2
+\|d_{\kS,\nS}-z_{\kS,\nS}\|^2;\\
e_{\kS,\nS}=\rS_{\kS}+b_{\kS,\nS}+d_{\kS,\nS}
-\sum_{\iii}\LS_{\kS\iS}a_{\iS,\nS};
\end{array}
\right.\\
\text{for every}\;\kS\in\KK\smallsetminus K_{\nS}\\
\left\lfloor
\begin{array}{l}
b_{\kS,\nS}=b_{\kS,\nS-1};\;
d_{\kS,\nS}=d_{\kS,\nS-1};\;
e_{\kS,\nS}^*=e_{\kS,\nS-1}^*;\;
q_{\kS,\nS}^*=q_{\kS,\nS-1}^*;\;
t_{\kS,\nS}^*=t_{\kS,\nS-1}^*;\\
\eta_{\kS,\nS}=\eta_{\kS,\nS-1};\;
e_{\kS,\nS}=\rS_{\kS}+b_{\kS,\nS}+d_{\kS,\nS}
-\sum_{\iii}\LS_{\kS\iS}a_{\iS,\nS};
\end{array}
\right.\\
\text{for every}\;\iS\in\II\\
\left\lfloor
\begin{array}{l}
p_{\iS,\nS}^*=a_{\iS,\nS}^*
+\RS_{\iS}\boldsymbol{a}_{\nS}
+\sum_{\kkk}\LS_{\kS\iS}^*e_{\kS,\nS}^*;
\end{array}
\right.\\[1mm]
\begin{aligned}
\Delta_{\nS}&=\textstyle
{-}(4\upalpha)^{-1}\big(\sum_{\iii}\xi_{\iS,\nS}
+\sum_{\kkk}\eta_{\kS,\nS}\big)
+\sum_{\iii}\scal{x_{\iS,\nS}-a_{\iS,\nS}}
{p_{\iS,\nS}^*}\\
&\textstyle
\quad\;+\sum_{\kkk}
\big(\scal{y_{\kS,\nS}-b_{\kS,\nS}}{q_{\kS,\nS}^*}
+\scal{z_{\kS,\nS}-d_{\kS,\nS}}{t_{\kS,\nS}^*}
+\scal{e_{\kS,\nS}}{v_{\kS,\nS}^*-e_{\kS,\nS}^*}\big);
\end{aligned}\\[4mm]
\theta_{\nS}=
\dfrac{\mathsf{1}_{[\Delta_{\nS}>0]}\Delta_{\nS}}
{\sum_{\iii}\|p_{\iS,\nS}^*\|^2
+\sum_{\kkk}\big(
\|q_{\kS,\nS}^*\|^2+\|t_{\kS,\nS}^*\|^2+\|e_{\kS,\nS}\|^2\big)
+\mathsf{1}_{[\Delta_{\nS}\leq 0]}};\\
\text{take}\;\lambda_{\nS}\in 
L^\infty(\upOmega,\FE,\PP;[\upvarepsilon,\uprho])\\
\text{for every}\;\iS\in\II\\
\left\lfloor
\begin{array}{l}
x_{\iS,\nS+1}=x_{\iS,\nS}-\lambda_{\nS}\theta_{\nS}p_{\iS,\nS}^*;
\end{array}
\right.\\
\text{for every}\;\kS\in\KK\\
\left\lfloor
\begin{array}{l}
y_{\kS,\nS+1}=y_{\kS,\nS}-\lambda_{\nS}\theta_{\nS}q_{\kS,\nS}^*;\;
z_{\kS,\nS+1}=z_{\kS,\nS}-\lambda_{\nS}\theta_{\nS}t_{\kS,\nS}^*;\;
v_{\kS,\nS+1}^*=v_{\kS,\nS}^*-\lambda_{\nS}\theta_{\nS}e_{\kS,\nS};
\end{array}
\right.\\[1mm]
\end{array}
\right.
\end{array}
\end{equation}
\end{algorithm}

The convergence properties of Algorithm~\ref{algo:1} are
established in the following theorem.

\begin{theorem}
\label{t:1}
Consider the setting of Algorithm~\ref{algo:1}. Suppose that
$\inf_{\nnn}\EE(\lambda_{\nS}(2-\lambda_{\nS}))>0$ and that 
$\mathscr{D}\neq\emp$. Then the following hold: 
\begin{enumerate}
\item
\label{t:1i}
Let $\iS\in\II$. Then $(x_{\iS,\nS})_{\nnn}$ lies in
$L^2(\upOmega,\FE,\PP;\HS_{\iS})$ and
$\sum_{\nnn}\EE\|x_{\iS,\nS+1}-x_{\iS,\nS}\|^2<\pinf$.
\item
\label{t:1ii}
Let $\kS\in\KK$. Then $(y_{\kS,\nS})_{\nnn}$, 
$(z_{\kS,\nS})_{\nnn}$, and $(v_{\kS,\nS}^*)_{\nnn}$ are sequences
in $L^2(\upOmega,\FE,\PP;\GS_{\kS})$. Further,
$\sum_{\nnn}\EE\|y_{\kS,\nS+1}-y_{\kS,\nS}\|^2<\pinf$,
$\sum_{\nnn}\EE\|z_{\kS,\nS+1}-z_{\kS,\nS}\|^2<\pinf$, and
$\sum_{\nnn}\EE\|v_{\kS,\nS+1}^*-v_{\kS,\nS}^*\|^2<\pinf$.
\item
\label{t:1iii-}
Let $\iS\in\II$ and $\kS\in\KK$. Then
$x_{\iS,\nS}-a_{\iS,\nS}\Pto 0$,
$y_{\kS,\nS}-b_{\kS,\nS}\Pto 0$,
$z_{\kS,\nS}-d_{\kS,\nS}\Pto 0$, and
$v_{\kS,\nS}^*-e_{\kS,\nS}^*\Pto 0$.
\item
\label{t:1iii}
There exist a $\mathscr{P}$-valued random variable 
$\overline{\boldsymbol{x}}$ and a $\mathscr{D}$-valued random
variable $\overline{\boldsymbol{v}}^*$ such that, for every 
$\iS\in\II$ and every $\kS\in\KK$, 
$x_{\iS,\nS}\to\overline{x}_{\iS}\;\Pas$,
$a_{\iS,\nS}\to\overline{x}_{\iS}\;\Pas$,
and $v_{\kS,\nS}^*\to\overline{v}_{\kS}^*\;\Pas$
\end{enumerate}
\end{theorem}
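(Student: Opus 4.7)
The strategy is to apply Theorem~\ref{t:5} to the saddle decomposition $\sad=\bunder{\boldsymbol{\mathsf{W}}}+\bunder{\boldsymbol{\mathsf{C}}}$ of \eqref{e:1179}--\eqref{e:2796} on the finite-dimensional Euclidean space $\XXX$. Since $\mathscr{D}\neq\emp$, Proposition~\ref{p:6}\ref{p:6iv} yields $\zer\sad\neq\emp$, and the recalled maximal monotonicity of $\bunder{\boldsymbol{\mathsf{W}}}$ together with the $\upalpha$-cocoercivity of $\bunder{\boldsymbol{\mathsf{C}}}$ place the saddle form \eqref{e:sf} in the framework of Problem~\ref{prob:19} with $\HS=\XXX$, $\WS=\bunder{\boldsymbol{\mathsf{W}}}$, and $\CS=\bunder{\boldsymbol{\mathsf{C}}}$.

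The core bookkeeping step is to recast Algorithm~\ref{algo:1} as an instance of Algorithm~\ref{algo:20} for the iterates $\bunder{x}_{\nS}=(\boldsymbol{x}_{\nS},\boldsymbol{y}_{\nS},\boldsymbol{z}_{\nS},\boldsymbol{v}_{\nS}^{*})$. For each $\iS\in\II$ and each $\kS\in\KK$, let $\vartheta_{\iS}(\nS)$ and $\varpi_{\kS}(\nS)$ denote the most recent iteration indices at which $\iS$ and $\kS$ were activated, so that $a_{\iS,\nS}=a_{\iS,\vartheta_{\iS}(\nS)}$ and similarly for the $\BS_{\kS}^{\MM}$- and $\DS_{\kS}^{\MM}$-resolvent outputs. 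Taking $\bunder{w}_{\nS}+\bunder{e}_{\nS}=(\boldsymbol{a}_{\nS},\boldsymbol{b}_{\nS},\boldsymbol{d}_{\nS},\boldsymbol{e}_{\nS}^{*})$ with dual coordinate computed from $(\boldsymbol{p}_{\nS}^{*},\boldsymbol{q}_{\nS}^{*},\boldsymbol{t}_{\nS}^{*},\boldsymbol{e}_{\nS})$ and $\bunder{q}_{\nS}$ the vector of delayed primal iterates $x_{\iS,\vartheta_{\iS}(\nS)}$, $y_{\kS,\varpi_{\kS}(\nS)}$, $z_{\kS,\varpi_{\kS}(\nS)}$ (the fourth coordinate being $\mathsf{0}$), the resolvent identities in \eqref{e:long1}, combined with Lemma~\ref{l:A2} and Assumption~\ref{a:2}, show that the corrected pair lies in $\gra\bunder{\boldsymbol{\mathsf{W}}}$ almost surely, while $\bunder{c}_{\nS}^{*}+\bunder{f}_{\nS}^{*}=\bunder{\boldsymbol{\mathsf{C}}}\bunder{q}_{\nS}$ by construction. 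The crucial observation is that each of the staleness errors $\bunder{e}_{\nS},\bunder{e}_{\nS}^{*},\bunder{f}_{\nS}^{*}$ is expressible as a finite sum of telescopic iterate differences $\bunder{x}_{\jS+1}-\bunder{x}_{\jS}$ for $\jS$ between the relevant last-activation time and $\nS$. The quantities $\Delta_{\nS}$ and $\theta_{\nS}$ of Algorithm~\ref{algo:1} then coincide with those of Algorithm~\ref{algo:20} for this data.

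Items \ref{t:1i} and \ref{t:1ii} follow at once from Theorem~\ref{t:5}\ref{t:5i-}, \ref{t:5i}, and \ref{t:5ii}, once one establishes $\sum_{\nnn}\EE\varepsilon_{\nS}(\cdot,\zS)\EE\lambda_{\nS}<\pinf$ for every $\zS\in\zer\sad$. This is the principal technical step: via the Cauchy--Schwarz inequality, the squared norm of each telescopic error is dominated by a sum of squared successive differences whose expectations are summable by Theorem~\ref{t:5}\ref{t:5ii} under the hypothesis $\inf_{\nnn}\EE(\lambda_{\nS}(2-\lambda_{\nS}))>0$; multiplying by the bounded factor $\EE\lambda_{\nS}\leq\uprho$ preserves summability. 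For \ref{t:1iii-} one combines this summability of $\EE\|\bunder{x}_{\nS+1}-\bunder{x}_{\nS}\|^{2}$ with Proposition~\ref{p:4} applied blockwise to each $(x_{\iS,\nS})$, $(y_{\kS,\nS})$, $(z_{\kS,\nS})$, $(v_{\kS,\nS}^{*})$ via Assumption~\ref{a:1}, to obtain $L^{1}$ convergence, hence convergence in probability, of the delay gaps $x_{\iS,\vartheta_{\iS}(\nS)}-x_{\iS,\nS}$, $y_{\kS,\varpi_{\kS}(\nS)}-y_{\kS,\nS}$, $z_{\kS,\varpi_{\kS}(\nS)}-z_{\kS,\nS}$, $v_{\kS,\varpi_{\kS}(\nS)}^{*}-v_{\kS,\nS}^{*}$. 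The firm nonexpansiveness of the resolvents in \eqref{e:long1} and the Lipschitzianity of the coupling terms then propagate this to $a_{\iS,\nS}-x_{\iS,\nS}\Pto 0$, $b_{\kS,\nS}-y_{\kS,\nS}\Pto 0$, $d_{\kS,\nS}-z_{\kS,\nS}\Pto 0$, and $e_{\kS,\nS}^{*}-v_{\kS,\nS}^{*}\Pto 0$.

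Finally, for \ref{t:1iii} we invoke Theorem~\ref{t:5}\ref{t:5iv}: since $\XXX$ is Euclidean, it suffices to verify the three conditions $\bunder{x}_{\nS}-\bunder{w}_{\nS}-\bunder{e}_{\nS}\Pto\mathsf{0}$, $\bunder{w}_{\nS}+\bunder{e}_{\nS}-\bunder{q}_{\nS}\Pto\mathsf{0}$, and $\bunder{w}_{\nS}^{*}+\bunder{e}_{\nS}^{*}+\bunder{\boldsymbol{\mathsf{C}}}\bunder{q}_{\nS}\Pto\mathsf{0}$; all three reduce, via \ref{t:1iii-} and the boundedness of the proximal parameters supplied by Assumption~\ref{a:2}, to the already-established block convergences in probability. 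Theorem~\ref{t:5}\ref{t:5iv} then produces a $(\zer\sad)$-valued random variable $\overline{\bunder{x}}=(\overline{\boldsymbol{x}},\overline{\boldsymbol{y}},\overline{\boldsymbol{z}},\overline{\boldsymbol{v}}^{*})$ to which $(\bunder{x}_{\nS})_{\nnn}$ converges almost surely, so that in particular $x_{\iS,\nS}\to\overline{x}_{\iS}\;\Pas$ and $v_{\kS,\nS}^{*}\to\overline{v}_{\kS}^{*}\;\Pas$; Proposition~\ref{p:6}\ref{p:6ii} identifies $(\overline{\boldsymbol{x}},\overline{\boldsymbol{v}}^{*})\in\mathscr{P}\times\mathscr{D}$, and $a_{\iS,\nS}\to\overline{x}_{\iS}\;\Pas$ follows by combining this convergence with \ref{t:1iii-} (upgraded from $\Pto$ to $\Pas$ along a further subsequence and then back to full convergence by the monotone decrease argument used in the second bullet of the proof of Theorem~\ref{t:3}). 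The main obstacle is the bookkeeping in the second paragraph: precisely identifying $(\bunder{w}_{\nS},\bunder{w}_{\nS}^{*},\bunder{q}_{\nS},\bunder{c}_{\nS}^{*},\bunder{f}_{\nS}^{*},\bunder{e}_{\nS},\bunder{e}_{\nS}^{*})$ so that the graph and cocoercivity relations hold simultaneously and the error summability condition of Theorem~\ref{t:5} can be verified.
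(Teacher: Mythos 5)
Your overall strategy---recasting Algorithm~\ref{algo:1} as an instance of Algorithm~\ref{algo:20} for the saddle decomposition $\sad=\bunder{\boldsymbol{\mathsf{W}}}+\bunder{\boldsymbol{\mathsf{C}}}$ on $\XXX$, handling the delays via Proposition~\ref{p:4}, and concluding with Theorem~\ref{t:5}\ref{t:5iv}---is the paper's. However, there are two genuine gaps. First, your treatment of the staleness as errors $(\bunder{\boldsymbol{e}}_{\nS},\bunder{\boldsymbol{e}}_{\nS}^*,\bunder{\boldsymbol{f}}_{\nS}^*)$ whose summability must be verified is circular: you propose to dominate $\sum_{\nnn}\EE\varepsilon_{\nS}(\cdot,\bunder{\boldsymbol{\zS}})\EE\lambda_{\nS}$ by telescopic sums controlled through $\sum_{\nnn}\EE\|\bunder{\boldsymbol{x}}_{\nS+1}-\bunder{\boldsymbol{x}}_{\nS}\|^2<\pinf$, but that bound is the conclusion of Theorem~\ref{t:5}\ref{t:5ii}, available only \emph{after} the summability hypothesis has been verified. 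The paper sidesteps this entirely: the stale quantities $(\boldsymbol{a}_{\nS},\boldsymbol{b}_{\nS},\boldsymbol{d}_{\nS},\boldsymbol{e}_{\nS}^*)$ are \emph{exact} points in $\gra\bunder{\boldsymbol{\mathsf{W}}}$ (resolvent outputs of delayed inputs), so one sets $\bunder{\boldsymbol{e}}_{\nS}=\bunder{\boldsymbol{e}}_{\nS}^*=\bunder{\boldsymbol{f}}_{\nS}^*=\bunder{\boldsymbol{\mathsf{0}}}$, absorbs all delays into the choice of $\bunder{\boldsymbol{q}}_{\nS}$ (whose fourth coordinate must be $(e_{\kS,\nS}^*)_{\kkk}$ rather than $\mathsf{0}$ as you wrote, so that $\|\bunder{\boldsymbol{w}}_{\nS}-\bunder{\boldsymbol{q}}_{\nS}\|^2$ reproduces exactly $\sum_{\iii}\xi_{\iS,\nS}+\sum_{\kkk}\eta_{\kS,\nS}$ and hence the $\Delta_{\nS}$ of \eqref{e:long1}), and obtains $\varepsilon_{\nS}(\cdot,\bunder{\boldsymbol{\zS}})=0$ $\Pas$ for free.

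Second, and more seriously, your derivation of \ref{t:1iii-} does not work: vanishing of the delay gaps $x_{\iS,\vartheta_{\iS}(\nS)}-x_{\iS,\nS}$, etc., together with nonexpansiveness of the resolvents and Lipschitz continuity of the couplings, does not imply $x_{\iS,\nS}-a_{\iS,\nS}\Pto 0$; a resolvent output can be far from its input even with no delay at all (being close is essentially the fixed-point property one is trying to prove). The actual argument is the analytic core of the proof: one first shows that $(\bunder{\boldsymbol{t}}_{\nS}^*)_{\nnn}$ is bounded $\Pas$ and invokes Theorem~\ref{t:5}\ref{t:5iii} to get $\varlimsup\Delta_{\nS}\leq 0$ $\Pas$; then the strong monotonicity of the shifted operators $\upgamma_{\iS,\nS}^{-1}\Id-\QS_{\iS}$, $\upmu_{\kS,\nS}^{-1}\Id-\BS_{\kS}^{\LL}$, $\upnu_{\kS,\nS}^{-1}\Id-\DS_{\kS}^{\LL}$ (Lemma~\ref{l:A2}, which is precisely where Assumption~\ref{a:2} and the condition $\upsigma>1/(4\upalpha)$ enter) gives the lower bound $\Delta_{\nS}\geq\xi_{\nS}+\chi_{\nS}$, where $\xi_{\nS}$ is a positive multiple of $\|\boldsymbol{x}_{\nS}-\boldsymbol{a}_{\nS}\|^2+\|\boldsymbol{y}_{\nS}-\boldsymbol{b}_{\nS}\|^2+\|\boldsymbol{z}_{\nS}-\boldsymbol{d}_{\nS}\|^2+\upvarepsilon\|\boldsymbol{v}_{\nS}^*-\boldsymbol{e}_{\nS}^*\|^2$ and $\chi_{\nS}\Pto 0$ by the delay estimates from Proposition~\ref{p:4}; Lemma~\ref{l:3} then squeezes $\xi_{\nS}\Pto 0$. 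Without this argument neither \ref{t:1iii-} nor the hypotheses of Theorem~\ref{t:5}\ref{t:5iv} needed for \ref{t:1iii} are established, so the last two paragraphs of your proposal rest on an unsupported step.
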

\begin{proof}
The results will be derived from Theorem~\ref{t:5} applied to
$\bunder{\boldsymbol{\ZS}}=\zer\sad$ in $\XXX$, following the
general pattern of the deterministic proof of
\cite[Theorem~1]{Sadd22}. We use the notation of
Definition~\ref{d:S}, as well as \eqref{e:1179} and \eqref{e:2796}.
Note that, since $\mathscr{D}\neq\emp$, 
Proposition~\ref{p:6}\ref{p:6iv} asserts that $\zer\sad\neq\emp$.
Let us show that \eqref{e:long1} is a special case of
\eqref{e:a20}. We define the random indices
\begin{equation}
\label{e:9214}
(\forall\iS\in\II)(\forall\nnn)\quad
\vartheta_{\iS}(\nS)=\max\menge{\jjj}{\jS\leq\nS
\;\:\text{and}\;\:\iS\in I_{\jS}}
\end{equation}
and
\begin{equation}
\label{e:3372}
(\forall\kS\in\KK)(\forall\nnn)\quad
\varrho_{\kS}(\nS)=\max\menge{\jjj}{\jS\leq\nS
\;\:\text{and}\;\:\kS\in K_{\jS}}.
\end{equation}
It then follows from \eqref{e:long1} that
\begin{equation}
\label{e:4173}
(\forall\iS\in\II)(\forall\nnn)\quad
a_{\iS,\nS}=a_{\iS,\vartheta_{\iS}(\nS)}\;\Pas,\quad
a_{\iS,\nS}^*=a_{\iS,\vartheta_{\iS}(\nS)}^*\;\Pas,\quad
\xi_{\iS,\nS}=\xi_{\iS,\vartheta_{\iS}(\nS)}\;\Pas,
\end{equation}
and 
\begin{equation}
\label{e:6609}
(\forall\kS\in\KK)(\forall\nnn)\quad
\begin{cases}
b_{\kS,\nS}=b_{\kS,\varrho_{\kS}(\nS)}\;\Pas;\;\:
d_{\kS,\nS}=d_{\kS,\varrho_{\kS}(\nS)}\;\Pas;\;\:
\eta_{\kS,\nS}=\eta_{\kS,\varrho_{\kS}(\nS)}\;\Pas;\\
e_{\kS,\nS}^*=e_{\kS,\varrho_{\kS}(\nS)}^*\;\Pas;\;\:
q_{\kS,\nS}^*=q_{\kS,\varrho_{\kS}(\nS)}^*\;\Pas;\;\:
t_{\kS,\nS}^*=t_{\kS,\varrho_{\kS}(\nS)}^*\;\Pas
\end{cases}
\end{equation}
To match the notation of Theorem~\ref{t:5}, set
\begin{equation}
\label{e:8870}
(\forall\nnn)\quad
\begin{cases}
\bunder{\boldsymbol{x}}_{\nS}=(\boldsymbol{x}_{\nS},
\boldsymbol{y}_{\nS},\boldsymbol{z}_{\nS},\boldsymbol{v}_{\nS}^*);
\\
\widetilde{\bunder{\boldsymbol{q}}}_{\nS}=
(\boldsymbol{x}_{\nS},\boldsymbol{y}_{\nS},\boldsymbol{z}_{\nS},
\boldsymbol{e}_{\nS}^*);\\
\bunder{\boldsymbol{w}}_{\nS}=(\boldsymbol{a}_{\nS},
\boldsymbol{b}_{\nS},\boldsymbol{d}_{\nS},\boldsymbol{e}_{\nS}^*);
\\
\bunder{\boldsymbol{w}}_{\nS}^*=\big(
\boldsymbol{p}_{\nS}^*
-(\CS_{\iS}x_{\iS,\vartheta_{\iS}(\nS)})_{\iii},
\boldsymbol{q}_{\nS}^*
-(\BS_{\kS}^{\CC}y_{\kS,\varrho_{\kS}(\nS)})_{\kkk},
\boldsymbol{t}_{\nS}^*
-(\DS_{\kS}^{\CC}z_{\kS,\varrho_{\kS}(\nS)})_{\kkk},
\boldsymbol{e}_{\nS}\big);\\
\bunder{\boldsymbol{q}}_{\nS}=
\big((x_{\iS,\vartheta_{\iS}(\nS)})_{\iii},
(y_{\kS,\varrho_{\kS}(\nS)})_{\kkk},
(z_{\kS,\varrho_{\kS}(\nS)})_{\kkk},
(e_{\kS,\nS}^*)_{\kkk}\big);\\
\bunder{\boldsymbol{t}}_{\nS}^*=(
\boldsymbol{p}_{\nS}^*,\boldsymbol{q}_{\nS}^*,
\boldsymbol{t}_{\nS}^*,\boldsymbol{e}_{\nS});\\
\brk1{\bunder{\boldsymbol{e}}_{\nS},
\bunder{\boldsymbol{e}}_{\nS}^*,
\bunder{\boldsymbol{f}}_{\nS}^*}
=\brk1{\bunder{\boldsymbol{\mathsf{0}}},
\bunder{\boldsymbol{\mathsf{0}}},
\bunder{\boldsymbol{\mathsf{0}}}}.
\end{cases}
\end{equation}
Then it follows from \eqref{e:GBeps} that,
for every $\nnn$ and every $\bunder{\boldsymbol{\zS}}\in\zer\sad$, 
$\varepsilon_{\nS}(\cdot,\bunder{\boldsymbol{\zS}})=0\;\Pas$ 
Next, we observe that, for every $\iS\in\II$ and every $\nnn$, 
\eqref{e:4173}, \eqref{e:9214}, \eqref{e:long1}, and
\cite[Proposition~23.2(ii)]{Livre1} imply that
\begin{align}
\label{e:6869}
a_{\iS,\nS}^*-\CS_{\iS}x_{\iS,\vartheta_{\iS}(\nS)}
&=a_{\iS,\vartheta_{\iS}(\nS)}^*
-\CS_{\iS}x_{\iS,\vartheta_{\iS}(\nS)}\nonumber\\
&=\upgamma_{\iS,\vartheta_{\iS}(\nS)}^{-1}
\big(x_{\iS,\vartheta_{\iS}(\nS)}
-a_{\iS,\vartheta_{\iS}(\nS)}\big)
-l_{\iS,\vartheta_{\iS}(\nS)}^*
-\CS_{\iS}x_{\iS,\vartheta_{\iS}(\nS)}
+\QS_{\iS}a_{\iS,\vartheta_{\iS}(\nS)}
\nonumber\\
&\in{-}\sS_{\iS}^*+\AS_{\iS}a_{\iS,\vartheta_{\iS}(\nS)}
+\QS_{\iS}a_{\iS,\vartheta_{\iS}(\nS)}
\nonumber\\
&={-}\sS_{\iS}^*+\AS_{\iS}a_{\iS,\nS}+\QS_{\iS}a_{\iS,\nS}\;\;\Pas
\end{align}
and, therefore, that
\begin{align}
\label{e:9073}
p_{\iS,\nS}^*-\CS_{\iS}x_{\iS,\vartheta_{\iS}(\nS)}
&=a_{\iS,\nS}^*-\CS_{\iS}x_{\iS,\vartheta_{\iS}(\nS)}
+\RS_{\iS}\boldsymbol{a}_{\nS}
+\sum_{\kkk}\LS_{\kS\iS}^*e_{\kS,\nS}^*
\nonumber\\
&\in{-}\sS_{\iS}^*+\AS_{\iS}a_{\iS,\nS}+\QS_{\iS}a_{\iS,\nS}
+\RS_{\iS}\boldsymbol{a}_{\nS}
+\sum_{\kkk}\LS_{\kS\iS}^*e_{\kS,\nS}^*\;\;\Pas
\end{align}
Likewise, we derive from 
\eqref{e:6609}, \eqref{e:3372}, \eqref{e:long1}, and 
\cite[Proposition~23.2(ii)]{Livre1} that
\begin{equation}
\label{e:3549}
(\forall\kS\in\KK)(\forall\nnn)\quad
\begin{cases}
q_{\kS,\nS}^*-\BS_{\kS}^{\CC}y_{\kS,\varrho_{\kS}(\nS)}
\in\BS_{\kS}^{\MM}b_{\kS,\nS}+\BS_{\kS}^{\LL}b_{\kS,\nS}
-e_{\kS,\nS}^*\;\;\Pas;\\
t_{\kS,\nS}^*-\DS_{\kS}^{\CC}z_{\kS,\varrho_{\kS}(\nS)}
\in\DS_{\kS}^{\MM}d_{\kS,\nS}+\DS_{\kS}^{\LL}d_{\kS,\nS}
-e_{\kS,\nS}^*\;\;\Pas;\\
e_{\kS,\nS}=\rS_{\kS}+b_{\kS,\nS}+d_{\kS,\nS}
-\sum_{\iii}\LS_{\kS\iS}a_{\iS,\nS}\;\;\Pas
\end{cases}
\end{equation}
In turn, we derive from \eqref{e:8870} and \eqref{e:1179} that
the sequence $(\bunder{\boldsymbol{w}}_{\nS},
\bunder{\boldsymbol{w}}_{\nS}^*)_{\nnn}$ lies in
$\gra\bunder{\boldsymbol{\mathsf{W}}}\;\Pas$
Next, using \eqref{e:8870} and \eqref{e:2796}, we obtain,
for every $\nnn$,
$\bunder{\boldsymbol{t}}_{\nS}^*
=\bunder{\boldsymbol{w}}_{\nS}^*
+\bunder{\boldsymbol{\CS}}\bunder{\boldsymbol{q}}_{\nS}\;\Pas$ 
Additionally, \eqref{e:long1} and \eqref{e:4173}--\eqref{e:8870}
yield
\begin{equation}
\label{e:4376}
(\forall\nnn)\quad
\Sum_{\iii}\xi_{\iS,\nS}+\Sum_{\kkk}\eta_{\kS,\nS}
=\|\bunder{\boldsymbol{w}}_{\nS}
-\bunder{\boldsymbol{q}}_{\nS}\|^2\quad\Pas
\end{equation}
Hence, in view of \eqref{e:long1},
\begin{equation}
\label{e:8350}
(\forall\nnn)\quad
\Delta_{\nS}=\scal{\bunder{\boldsymbol{x}}_{\nS}
-\bunder{\boldsymbol{w}}_{\nS}}{
\bunder{\boldsymbol{t}}_{\nS}^*}
-(4\upalpha)^{-1}\|\bunder{\boldsymbol{w}}_{\nS}
-\bunder{\boldsymbol{q}}_{\nS}\|^2\;\;\Pas
\end{equation}
On the other hand, 
\begin{equation}
(\forall\iS\in\II)(\forall\kS\in\KK)(\forall\nnn)\;\;
\begin{cases}
\RS_{\iS},\QS_{\iS}, \BS_{\kS}^{\LL},\DS_{\kS}^{\LL}\;
\text{and}\;\LS_{\kS\iS}\;\text{are Lipschitzian;}\\
\CS_{\iS}, \BS_{\kS}^{\CC},\;\text{and}\;\DS_{\kS}^{\CC}\;
\text{are cocoercive, hence Lipschitzian;}\\
\mathsf{J}_{\upgamma_{\iS,\nS}\AS_{\iS}},
\mathsf{J}_{\upmu_{\kS,\nS}\BS_{\kS}^{\MM}},\;\text{and}\;
\mathsf{J}_{\upnu_{\kS,\nS}\DS_{\kS}^{\MM}}\;
\text{are $1$-Lipschitzian.}
\end{cases}
\end{equation}
It therefore follows from Assumption~\ref{a:2}\ref{a:2iv},
Lemmas~\ref{l:100} and \ref{l:101}, and an inductive argument that
the variables defined in \eqref{e:8870} belong to
$L^2(\upOmega,\FE,\PP;\XXX)$. 
Altogether, taking into account the assumptions, we have shown that
\eqref{e:long1} is a realization of \eqref{e:a20}.
In turn, Theorem~\ref{t:5}\ref{t:5ii} asserts that
\begin{equation}
\label{e:4932}
\sum_{\nnn}\EE\|\bunder{\boldsymbol{x}}_{\nS+1}
-\bunder{\boldsymbol{x}}_{\nS}\|^2<\pinf.
\end{equation}

\ref{t:1i}--\ref{t:1ii}:
These follow from Theorem~\ref{t:5}\ref{t:5i-}, \eqref{e:4932}, 
and \eqref{e:8870}.

\ref{t:1iii-}--\ref{t:1iii}:
Theorem~\ref{t:5}\ref{t:5i} implies that
$(\bunder{\boldsymbol{x}}_{\nS})_{\nnn}$ is bounded $\Pas$
Therefore, arguing as in the proof of \cite[Theorem~1]{Sadd22}, 
\begin{equation}
\label{e:22}
\brk2{\widetilde{\bunder{\boldsymbol{q}}}_{\nS}}_{\nnn},\;\;
(\bunder{\boldsymbol{w}}_{\nS})_{\nnn},\;\:\text{and}\;\;
(\bunder{\boldsymbol{t}}_{\nS}^*)_{\nnn}\;\:\text{are bounded}\;\Pas
\end{equation}
As a result, \eqref{e:8350} and
Theorem~\ref{t:5}\ref{t:5iii} yield
\begin{equation}
\label{e:9758}
\varlimsup\big(
\scal{\bunder{\boldsymbol{x}}_{\nS}
-\bunder{\boldsymbol{w}}_{\nS}}
{\bunder{\boldsymbol{t}}_{\nS}^*}
-(4\upalpha)^{-1}\|\bunder{\boldsymbol{w}}_{\nS}
-\bunder{\boldsymbol{q}}_{\nS}\|^2\big)
=\varlimsup\Delta_{\nS}\leq 0\;\;\Pas
\end{equation}
Now define
\begin{equation}
\label{e:L}
\boldsymbol{\LS}\colon\HHS\to\GGS\colon\boldsymbol{\xS}\mapsto
\brk3{\sum_{\iii}\LS_{\kS\iS}\xS_{\iS}}_{\kkk},\;\;
\text{with adjoint}\;\;
\LLS^*\colon\GGS\to\HHS\colon\boldsymbol{\vS}^*
\mapsto\brk3{\sum_{\kkk}\LS_{\kS\iS}^*{\vS}^*_{\kS}}_{\iii},
\end{equation}
and
\begin{equation}
\label{e:W}
\bunder{\boldsymbol{\mathsf{U}}}\colon\XXX\to\XXX\colon
\brk1{\boldsymbol{\xS},\boldsymbol{\yS},\boldsymbol{\zS},
\boldsymbol{\vS}^*}\mapsto\brk1{\LLS^*\boldsymbol{\vS}^*,
-\boldsymbol{\vS}^*,-\boldsymbol{\vS}^*,
-\LLS\boldsymbol{\xS}+\boldsymbol{\yS}+\boldsymbol{\zS}}.
\end{equation}
Further, for every $\nnn$, set
\begin{equation}
\label{e:3958}
\begin{cases}
(\forall\iS\in\II)\;\;
\mathsf{F}_{\iS,\nS}
=\upgamma_{\iS,\vartheta_{\iS}(\nS)}^{-1}\Id-\QS_{\iS};\\
(\forall\kS\in\KK)\;\;
\mathsf{S}_{\kS,\nS}
=\upmu_{\kS,\varrho_{\kS}(\nS)}^{-1}\Id-\BS_{\kS}^{\LL};\;\:
\mathsf{T}_{\kS,\nS}
=\upnu_{\kS,\varrho_{\kS}(\nS)}^{-1}\Id-\DS_{\kS}^{\LL};\\
\bunder{\boldsymbol{\mathsf{F}}}_{\nS}\colon\XXX\to\XXX\colon
(\boldsymbol{\xS},\boldsymbol{\yS},\boldsymbol{\zS},
\boldsymbol{\vS}^*)\mapsto
\brk1{(\mathsf{F}_{\iS,\nS}\xS_{\iS})_{\iii},
(\mathsf{S}_{\kS,\nS}\yS_{\kS})_{\kkk},
(\mathsf{T}_{\kS,\nS}\zS_{\kS})_{\kkk},
(\upsigma_{\kS,\varrho_{\kS}(\nS)}^{-1}
\vS_{\kS}^*)_{\kkk}}
\end{cases}
\end{equation}
and
\begin{equation}
\label{e:3514}
\begin{cases}
\bunder{\widetilde{\boldsymbol{x}}}_{\nS}=
\brk2{(x_{\iS,\vartheta_{\iS}(\nS)})_{\iii},
(y_{\kS,\varrho_{\kS}(\nS)})_{\kkk},
(z_{\kS,\varrho_{\kS}(\nS)})_{\kkk},
(v_{\kS,\varrho_{\kS}(\nS)}^*)_{\kkk}};\\
\bunder{\boldsymbol{v}}_{\nS}^*=
\bunder{\boldsymbol{\mathsf{F}}}_{\nS}
\bunder{\boldsymbol{x}}_{\nS}
-\bunder{\boldsymbol{\mathsf{F}}}_{\nS}
\bunder{\boldsymbol{w}}_{\nS};\;\:
\bunder{\boldsymbol{u}}_{\nS}^*=
\bunder{\boldsymbol{\mathsf{U}}}
\bunder{\boldsymbol{w}}_{\nS}
-\bunder{\boldsymbol{\mathsf{U}}}
\bunder{\boldsymbol{x}}_{\nS};\\
\bunder{\boldsymbol{r}}_{\nS}^*
=\big((\RS_{\iS}\boldsymbol{a}_{\nS}
-\RS_{\iS}\boldsymbol{x}_{\nS})_{\iii},
\boldsymbol{\mathsf{0}},\boldsymbol{\mathsf{0}},
\boldsymbol{\mathsf{0}}\big);\;\:
\bunder{\widetilde{\boldsymbol{r}}}_{\nS}^*
=\big((\RS_{\iS}\boldsymbol{a}_{\nS}
-\RS_{\iS}\boldsymbol{x}_{\vartheta_{\iS}(\nS)})_{\iii},
\boldsymbol{\mathsf{0}},\boldsymbol{\mathsf{0}},
\boldsymbol{\mathsf{0}}\big);\\
\bunder{\boldsymbol{l}}_{\nS}^*=
\brk2{
\brk1{{-}\sum_{\kkk}\LS_{\kS\iS}^*
v_{\kS,\vartheta_{\iS}(\nS)}^*}_{\iii},
\brk1{v_{\kS,\varrho_{\kS}(\nS)}^*}_{\kkk},
\brk1{v_{\kS,\varrho_{\kS}(\nS)}^*}_{\kkk},
\brk1{\sum_{\iii}\LS_{\kS\iS}x_{\iS,\varrho_{\kS}(\nS)}
-y_{\kS,\varrho_{\kS}(\nS)}-z_{\kS,\varrho_{\kS}(\nS)}}_{\kkk}}.
\end{cases}
\end{equation}
Assumptions \ref{prob:1a}--\ref{prob:1c} in Problem~\ref{prob:1}
and \ref{a:2}\ref{a:2ii}\&\ref{a:2iii}, together with 
Lemma~\ref{l:A2}, imply that
\begin{equation}
\label{e:6983}
(\forall\nnn)\quad
\text{the operators}\;\:
\begin{cases}
(\mathsf{F}_{\iS,\nS})_{\iii}\;\:\text{are 
$(\upchi+\upsigma)$-strongly monotone};\\
(\mathsf{S}_{\kS,\nS})_{\kkk}\:\:\text{and}\;\:
(\mathsf{T}_{\kS,\nS})_{\kkk}\;\:
\text{are $\upsigma$-strongly monotone}.
\end{cases}
\end{equation}
Consequently, in view of \eqref{e:3958}, there exists 
$\upkappa\in\RPP$ such that
\begin{equation}
\label{e:5297}
\text{the operators}\;\:
(\bunder{\boldsymbol{\mathsf{F}}}_{\nS})_{\nnn}\;\:
\text{are $\upkappa$-Lipschitzian}.
\end{equation}
Next, using the same arguments as in the proof of 
\cite[Theorem~1]{Sadd22}, we obtain
\begin{equation}
\label{e:2842}
(\forall\nnn)\quad\bunder{\boldsymbol{\mathsf{t}}}_{\nS}^*=
\bunder{\boldsymbol{\mathsf{F}}}_{\nS}
\widetilde{\bunder{\boldsymbol{x}}}_{\nS}
-\bunder{\boldsymbol{\mathsf{F}}}_{\nS}
\bunder{\boldsymbol{w}}_{\nS}
+\widetilde{\bunder{\boldsymbol{r}}}_{\nS}^*
+\bunder{\boldsymbol{l}}_{\nS}^*
+\bunder{\boldsymbol{\mathsf{U}}}
\bunder{\boldsymbol{w}}_{\nS}\;\Pas
\end{equation}
We also observe that, in view of \eqref{e:4932}, \eqref{e:9214},
\eqref{e:3372}, and Assumption~\ref{a:1}, 
Proposition~\ref{p:4} and Lemma~\ref{l:1} imply that
\begin{equation}
\label{e:8706}
(\forall\iS\in\II)(\forall\kS\in\KK)\quad
\begin{cases}
\boldsymbol{x}_{\vartheta_{\iS}(\nS)}-\boldsymbol{x}_{\nS}\Pto
\boldsymbol{0};\;\:
\boldsymbol{x}_{\varrho_{\kS}(\nS)}-\boldsymbol{x}_{\nS}\Pto
\boldsymbol{0};\\
\boldsymbol{v}_{\vartheta_{\iS}(\nS)}^*-\boldsymbol{v}_{\nS}^*\Pto
\boldsymbol{0};\;\:
\boldsymbol{y}_{\varrho_{\kS}(\nS)}-\boldsymbol{y}_{\nS}\Pto
\boldsymbol{0};\\
\boldsymbol{z}_{\varrho_{\kS}(\nS)}-\boldsymbol{z}_{\nS}\Pto
\boldsymbol{0};\;\:
\boldsymbol{v}_{\varrho_{\kS}(\nS)}^*-\boldsymbol{v}_{\nS}^*\Pto
\boldsymbol{0}.
\end{cases}
\end{equation}
Thus, \eqref{e:3514}, \eqref{e:L}, and \eqref{e:W} yield
\begin{equation}
\label{e:6047}
\bunder{\boldsymbol{l}}_{\nS}^*
+\bunder{\boldsymbol{\mathsf{U}}}
\bunder{\boldsymbol{x}}_{\nS}
\Pto\bunder{\boldsymbol{\mathsf{0}}},
\end{equation}
while assumption \ref{prob:1e} in Problem~\ref{prob:1} gives
\begin{equation}
\label{e:404}
(\forall\iS\in\II)\quad
\|\RS_{\iS}\boldsymbol{x}_{\vartheta_{\iS}(\nS)}
-\RS_{\iS}\boldsymbol{x}_{\nS}\|
\leq\upchi\|\boldsymbol{x}_{\vartheta_{\iS}(\nS)}
-\boldsymbol{x}_{\nS}\|\Pto0.
\end{equation}
On the other hand, \eqref{e:5297}, \eqref{e:3514}, and
\eqref{e:8706} yield
\begin{equation}
\label{e:8506}
\|\bunder{\boldsymbol{\mathsf{F}}}_{\nS}
\widetilde{\bunder{\boldsymbol{x}}}_{\nS}
-\bunder{\boldsymbol{\mathsf{F}}}_{\nS}
\bunder{\boldsymbol{x}}_{\nS}\|
\leq\upkappa\|\widetilde{\bunder{\boldsymbol{x}}}_{\nS}
-\bunder{\boldsymbol{x}}_{\nS}\|\Pto 0
\end{equation}
which, combined with 
\eqref{e:2842}, \eqref{e:3514}, 
\eqref{e:6047}, and \eqref{e:404} leads to
\begin{equation}
\label{e:9860}
\bunder{\boldsymbol{t}}_{\nS}^*
-\big(\bunder{\boldsymbol{v}}_{\nS}^*
+\bunder{\boldsymbol{r}}_{\nS}^*
+\bunder{\boldsymbol{u}}_{\nS}^*\big)
=\bunder{\boldsymbol{l}}_{\nS}^*
+\bunder{\boldsymbol{\mathsf{U}}}
\bunder{\boldsymbol{x}}_{\nS}
+\bunder{\boldsymbol{\mathsf{F}}}_{\nS}
\widetilde{\bunder{\boldsymbol{x}}}_{\nS}
-\bunder{\boldsymbol{\mathsf{F}}}_{\nS}
\bunder{\boldsymbol{x}}_{\nS}
+\widetilde{\bunder{\boldsymbol{r}}}_{\nS}^*
-\bunder{\boldsymbol{r}}_{\nS}^*
\Pto\bunder{\boldsymbol{\mathsf{0}}}.
\end{equation}
Additionally, \eqref{e:8870} and \eqref{e:8706} yield
\begin{equation}
\label{e:5101}
\widetilde{\bunder{\boldsymbol{q}}}_{\nS}
-\bunder{\boldsymbol{q}}_{\nS}\Pto
\bunder{\boldsymbol{\mathsf{0}}}.
\end{equation}
Therefore, by Cauchy--Schwarz and \eqref{e:22}, 
\begin{equation}
\label{e:5202}
\big|\scal1{\bunder{\boldsymbol{w}}_{\nS}
-\widetilde{\bunder{\boldsymbol{q}}}_{\nS}}
{\widetilde{\bunder{\boldsymbol{q}}}_{\nS}
-\bunder{\boldsymbol{q}}_{\nS}}\big|
\leq\bigg(\sup_{\mathsf{m}\in\NN}
\|\bunder{\boldsymbol{w}}_{\mathsf{m}}\|
+\sup_{\mathsf{m}\in\NN}
\|\widetilde{\bunder{\boldsymbol{q}}}_{\mathsf{m}}\|\bigg)
\|\widetilde{\bunder{\boldsymbol{q}}}_{\nS}
-\bunder{\boldsymbol{q}}_{\nS}\|\Pto 0
\end{equation}
while, by \eqref{e:9860},
\begin{equation}
\label{e:5203}
\big|\scal{\bunder{\boldsymbol{x}}_{\nS}
-\bunder{\boldsymbol{w}}_{\nS}}{
\bunder{\boldsymbol{t}}_{\nS}^*
-(\bunder{\boldsymbol{v}}_{\nS}^*
+\bunder{\boldsymbol{r}}_{\nS}^*
+\bunder{\boldsymbol{u}}_{\nS}^*)
}\big|
\leq\bigg(\sup_{\mathsf{m}\in\NN}
\|\bunder{\boldsymbol{x}}_{\mathsf{m}}\|
+\sup_{\mathsf{m}\in\NN}\|\bunder{\boldsymbol{w}}_{\mathsf{m}}\|
\bigg)\|\bunder{\boldsymbol{t}}_{\nS}^*
-(\bunder{\boldsymbol{v}}_{\nS}^*
+\bunder{\boldsymbol{r}}_{\nS}^*
+\bunder{\boldsymbol{u}}_{\nS}^*)\|\Pto 0.
\end{equation}
However, it follows from \eqref{e:W} and assumption \ref{prob:1d}
in Problem~\ref{prob:1} that
$\bunder{\boldsymbol{\mathsf{U}}}$ is linear and bounded, with 
$\bunder{\boldsymbol{\mathsf{U}}}^*
={-}\bunder{\boldsymbol{\mathsf{U}}}$. It then results from 
\eqref{e:3514} that, for every $\nnn$, 
$\scal{\bunder{\boldsymbol{x}}_{\nS}
-\bunder{\boldsymbol{w}}_{\nS}}{
\bunder{\boldsymbol{u}}_{\nS}^*}=0\;\Pas$
On the other hand, note that, for every $\nnn$,
\begin{align}
\label{e:5224}
&\scal{\bunder{\boldsymbol{x}}_{\nS}
-\bunder{\boldsymbol{w}}_{\nS}}
{\bunder{\boldsymbol{t}}_{\nS}^*}
-(4\upalpha)^{-1}\|\bunder{\boldsymbol{w}}_{\nS}
-\bunder{\boldsymbol{q}}_{\nS}\|^2\nonumber\\
&\quad=\scal{\bunder{\boldsymbol{x}}_{\nS}
-\bunder{\boldsymbol{w}}_{\nS}}{
\bunder{\boldsymbol{v}}_{\nS}^*
+\bunder{\boldsymbol{r}}_{\nS}^*
+\bunder{\boldsymbol{u}}_{\nS}^*}
+\scal{\bunder{\boldsymbol{x}}_{\nS}
-\bunder{\boldsymbol{w}}_{\nS}}{
\bunder{\boldsymbol{t}}_{\nS}^*
-(\bunder{\boldsymbol{v}}_{\nS}^*
+\bunder{\boldsymbol{r}}_{\nS}^*
+\bunder{\boldsymbol{u}}_{\nS}^*)}
-(4\upalpha)^{-1}\|\bunder{\boldsymbol{w}}_{\nS}
-\bunder{\boldsymbol{q}}_{\nS}\|^2\nonumber\\
&\quad=\scal{\bunder{\boldsymbol{x}}_{\nS}
-\bunder{\boldsymbol{w}}_{\nS}}{
\bunder{\boldsymbol{v}}_{\nS}^*
+\bunder{\boldsymbol{r}}_{\nS}^*}
+\scal{\bunder{\boldsymbol{x}}_{\nS}
-\bunder{\boldsymbol{w}}_{\nS}}{
\bunder{\boldsymbol{t}}_{\nS}^*
-(\bunder{\boldsymbol{v}}_{\nS}^*
+\bunder{\boldsymbol{r}}_{\nS}^*
+\bunder{\boldsymbol{u}}_{\nS}^*)}
\nonumber\\
&\quad\qquad-(4\upalpha)^{-1}\brk2{
\norm1{\bunder{\boldsymbol{w}}_{\nS}
-\widetilde{\bunder{\boldsymbol{q}}}_{\nS}}^2
+2\scal1{\bunder{\boldsymbol{w}}_{\nS}
-\widetilde{\bunder{\boldsymbol{q}}}_{\nS}}
{\widetilde{\bunder{\boldsymbol{q}}}_{\nS}
-\bunder{\boldsymbol{q}}_{\nS}}
+\norm1{\widetilde{\bunder{\boldsymbol{q}}}_{\nS}
-\bunder{\boldsymbol{q}}_{\nS}}^2}\;\;\Pas
\end{align}
Moreover, as in \cite[Equation~(95)]{Sadd22}, 
it follows from \eqref{e:3514}, \eqref{e:8870},
\eqref{e:3958}, \eqref{e:6983}, Assumption~\ref{a:2}\ref{a:2iii},
and assumption \ref{prob:1e} in Problem~\ref{prob:1} that, for 
every $\nnn$,
\begin{multline}
\label{e:1234}
\scal{\bunder{\boldsymbol{x}}_{\nS}
-\bunder{\boldsymbol{w}}_{\nS}}{
\bunder{\boldsymbol{v}}_{\nS}^*
+\bunder{\boldsymbol{r}}_{\nS}^*}
-(4\upalpha)^{-1}\|\bunder{\boldsymbol{w}}_{\nS}
-\widetilde{\bunder{\boldsymbol{q}}}_{\nS}\|^2\\
\geq\big(\upsigma-(4\upalpha)^{-1}\big)
\big(\|\boldsymbol{x}_{\nS}-\boldsymbol{a}_{\nS}\|^2
+\|\boldsymbol{y}_{\nS}-\boldsymbol{b}_{\nS}\|^2
+\|\boldsymbol{z}_{\nS}-\boldsymbol{d}_{\nS}\|^2\big)
+\upvarepsilon\|\boldsymbol{v}_{\nS}^*-\boldsymbol{e}_{\nS}^*\|^2\;
\;\Pas
\end{multline}
For every $\nnn$, let us define
\begin{equation}
\begin{cases}
\xi_{\nS}=\big(\upsigma-(4\upalpha)^{-1}\big)
\big(\|\boldsymbol{x}_{\nS}-\boldsymbol{a}_{\nS}\|^2
+\|\boldsymbol{y}_{\nS}-\boldsymbol{b}_{\nS}\|^2
+\|\boldsymbol{z}_{\nS}-\boldsymbol{d}_{\nS}\|^2\big)
+\upvarepsilon\|\boldsymbol{v}_{\nS}^*-\boldsymbol{e}_{\nS}^*\|^2;
\\
\chi_{\nS}=\scal{\bunder{\boldsymbol{x}}_{\nS}
-\bunder{\boldsymbol{w}}_{\nS}}{
\bunder{\boldsymbol{t}}_{\nS}^*
-(\bunder{\boldsymbol{v}}_{\nS}^*
+\bunder{\boldsymbol{r}}_{\nS}^*
+\bunder{\boldsymbol{u}}_{\nS}^*)}
-(4\upalpha)^{-1}\big(
2\scal{\bunder{\boldsymbol{w}}_{\nS}
-\widetilde{\bunder{\boldsymbol{q}}}_{\nS}}
{\widetilde{\bunder{\boldsymbol{q}}}_{\nS}
-\bunder{\boldsymbol{q}}_{\nS}}
+\|\widetilde{\bunder{\boldsymbol{q}}}_{\nS}
-\bunder{\boldsymbol{q}}_{\nS}\|^2\big).
\end{cases}
\end{equation}
Then $\inf_{\nnn}\xi_{\nS}\geq0\;\Pas$ Moreover, \eqref{e:5224}
and \eqref{e:1234} imply that, for every $\nnn$,
$\xi_{\nS}+\chi_{\nS}\leq\Delta_{\nS}\;\Pas$ In addition,
$\varlimsup\Delta_{\nS}\leq0\;\Pas$ by \eqref{e:9758} and 
$\chi_{\nS}\Pto 0$ by 
\eqref{e:5101}--\eqref{e:5203}. Therefore, in view of 
Lemma~\ref{l:3}, $\xi_{\nS}\Pto0$ and therefore
\begin{equation}
\label{e:3513}
\boldsymbol{x}_{\nS}-\boldsymbol{a}_{\nS}\Pto\boldsymbol{0},\;\;
\boldsymbol{y}_{\nS}-\boldsymbol{b}_{\nS}\Pto\boldsymbol{0},\;\;
\boldsymbol{z}_{\nS}-\boldsymbol{d}_{\nS}\Pto\boldsymbol{0},\;\;
\boldsymbol{v}_{\nS}^*-\boldsymbol{e}_{\nS}^*\Pto\boldsymbol{0},
\end{equation}
which establishes \ref{t:1iii-}.
In turn, \eqref{e:8870} and \eqref{e:5297} force
\begin{equation}
\bunder{\boldsymbol{x}}_{\nS}
-\bunder{\boldsymbol{w}}_{\nS}\Pto
\bunder{\boldsymbol{\mathsf{0}}}\;
\quad\text{and}\quad(\forall\nnn)\quad
\|\bunder{\boldsymbol{\mathsf{F}}}_{\nS}
\bunder{\boldsymbol{x}}_{\nS}
-\bunder{\boldsymbol{\mathsf{F}}}_{\nS}
\bunder{\boldsymbol{w}}_{\nS}\|
\leq\upkappa\|\bunder{\boldsymbol{x}}_{\nS}
-\bunder{\boldsymbol{w}}_{\nS}\|.
\end{equation}
Hence,
\begin{equation}
\label{e:9421}
\bunder{\boldsymbol{\mathsf{F}}}_{\nS}
\bunder{\boldsymbol{x}}_{\nS}
-\bunder{\boldsymbol{\mathsf{F}}}_{\nS}
\bunder{\boldsymbol{w}}_{\nS}\Pto
\bunder{\boldsymbol{\mathsf{0}}}.
\end{equation}
Likewise, \eqref{e:8706} yields
$\bunder{\boldsymbol{w}}_{\nS}-\bunder{\boldsymbol{q}}_{\nS}
\Pto\bunder{\boldsymbol{\mathsf{0}}}$.
Further, we infer from \eqref{e:3514}, \eqref{e:3513},
and Problem~\ref{prob:1}\ref{prob:1e} that
\begin{equation}
\label{e:725}
\|\bunder{\boldsymbol{r}}_{\nS}^*\|^2
=\|\boldsymbol{\RS}\boldsymbol{a}_{\nS}
-\boldsymbol{\RS}\boldsymbol{x}_{\nS}\|^2
\leq\upchi^2\|\boldsymbol{a}_{\nS}-\boldsymbol{x}_{\nS}\|^2\Pto 0.
\end{equation}
As a result, it follows from \eqref{e:3514}, \eqref{e:9860},
\eqref{e:9421}, and \eqref{e:725} that
\begin{equation}
\bunder{\boldsymbol{t}}_{\nS}^*=
\brk2{\bunder{\boldsymbol{t}}_{\nS}^*
-\big(\bunder{\boldsymbol{v}}_{\nS}^*
+\bunder{\boldsymbol{r}}_{\nS}^*
+\bunder{\boldsymbol{u}}_{\nS}^*\big)}
+\big(\bunder{\boldsymbol{\mathsf{F}}}_{\nS}
\bunder{\boldsymbol{x}}_{\nS}
-\bunder{\boldsymbol{\mathsf{F}}}_{\nS}
\bunder{\boldsymbol{w}}_{\nS}\big)
+\bunder{\boldsymbol{\mathsf{U}}}
(\bunder{\boldsymbol{w}}_{\nS}
-\bunder{\boldsymbol{x}}_{\nS})
+\bunder{\boldsymbol{r}}_{\nS}^*
\Pto\bunder{\boldsymbol{\mathsf{0}}}.
\end{equation}
Altogether, 
\begin{equation}
\label{e:77}
\bunder{\boldsymbol{x}}_{\nS}-\bunder{\boldsymbol{w}}_{\nS}-
\bunder{\boldsymbol{e}}_{\nS}\Pto
\bunder{\boldsymbol{\mathsf{0}}},\;
\bunder{\boldsymbol{w}}_{\nS}+\bunder{\boldsymbol{e}}_{\nS}
-\bunder{\boldsymbol{q}}_{\nS}\Pto
\bunder{\boldsymbol{\mathsf{0}}},\,\:\text{and}\;\:
\bunder{\boldsymbol{w}}_{\nS}^*+\bunder{\boldsymbol{e}}_{\nS}^*+
\bunder{\boldsymbol{\mathsf{C}}}
\bunder{\boldsymbol{q}}_{\nS}\Pto
\bunder{\boldsymbol{\mathsf{0}}}
\end{equation}
and Theorem~\ref{t:5}\ref{t:5iv} therefore
guarantees that there exists a $\zer\sad$-valued random variable
$\overline{\bunder{\boldsymbol{x}}}
=(\overline{\boldsymbol{x}},\overline{\boldsymbol{y}},
\overline{\boldsymbol{z}},\overline{\boldsymbol{v}}^*)$
such that $\bunder{\boldsymbol{x}}_{\nS}\to
\overline{\bunder{\boldsymbol{x}}}\;\Pas$
This and \eqref{e:3513} imply that, for every $\iS\in\II$
and every $\kS\in\KK$,
$x_{\iS,\nS}\to\overline{x}_{\iS}\;\Pas$,
$a_{\iS,\nS}\to\overline{x}_{\iS}\;\Pas$,
and $v_{\kS,\nS}^*\to\overline{v}_{\kS}^*\;\Pas$
Finally, Proposition~\ref{p:6}\ref{p:6ii} asserts that
$\overline{\boldsymbol{x}}$
solves \eqref{e:1p} $\Pas$ and that
$\overline{\boldsymbol{v}}^*$ solves \eqref{e:1d} $\Pas$
\end{proof}

\begin{remark}
\label{r:77}
Here are some observations pertaining to Theorem~\ref{t:1}.
\begin{enumerate}
\item
There does not exist any result on stochastic algorithms for
solving Problem~\ref{prob:1} with random block selection or random
relaxations. In the case of deterministic relaxations 
$(\uplambda_{\nS})_{\nnn}$ in $\left]0,2\right[$ and deterministic
blocks selection (see Example~\ref{ex:u}\ref{ex:ui}),
Theorem~\ref{t:1} appears in \cite[Theorem~1(iv)]{Sadd22}.
\item
For notational simplicity, we have not considered stochastic 
errors in the evaluations of the single-valued operators and the
resolvents, as is done in the simpler settings of Theorem~\ref{t:3}
and \cite{Siim25,Siop15,John24,Pesq15,Rosa16}. For this reason, 
we have implemented Theorem~\ref{t:5} with 
$(\forall\nnn)(\forall\bunder{\boldsymbol{\zS}}\in\zer\sad)$
$\varepsilon_{\nS}(\cdot,\bunder{\boldsymbol{\zS}})=0\;\Pas$ 
Such stochastic errors can be introduced in Algorithm~\ref{algo:1}
under suitable summability conditions to guarantee that 
$(\forall\bunder{\boldsymbol{\zS}}\in\zer\sad)$
$\sum_{\nnn}\EE\varepsilon_{\nS}(\cdot,\bunder{\boldsymbol{\zS}})
\EE\lambda_{\nS}<\pinf$.
\item
The convergence results invoke Theorem~\ref{t:5}\ref{t:5iv}, which
requires Euclidean spaces. Note that we cannot use
Theorem~\ref{t:5}\ref{t:5iv-}, which would provide weak
convergence in general Hilbert spaces, because the convergences in
\eqref{e:77} are only in probability and not almost sure.
\end{enumerate}
\end{remark}

\subsection{Application to multivariate minimization}

We consider a multivariate composite minimization problem.

\begin{problem}
\label{prob:2}
Let $(\HS_{\iS})_{\iii}$ and $(\GS_{\kS})_{\kkk}$
be finite families of Euclidean spaces with respective
direct sums $\HHS=\bigoplus_{\iii}\HS_{\iS}$ and
$\GGS=\bigoplus_{\kkk}\GS_{\kS}$.
Denote by $\boldsymbol{\xS}=(\xS_{\iS})_{\iii}$
a generic element in $\HHS$. For every $\iS\in\II$ and every
$\kS\in\KK$, let $\mathsf{f}_{\iS}\in\upGamma_0(\HS_{\iS})$, let
$\upalpha_{\iS}\in\RPP$, let $\upvarphi_{\iS}\colon\HS_{\iS}\to\RR$
be convex and
differentiable with a $(1/\upalpha_{\iS})$-Lipschitzian gradient,
let $\mathsf{g}_{\kS}\in\upGamma_0(\GS_{\kS})$, let 
$\mathsf{h}_{\kS}\in\upGamma_0(\GS_{\kS})$,
let $\upbeta_{\kS}\in\RPP$, let $\uppsi_{\kS}\colon\GS_{\kS}\to\RR$
be convex and differentiable with a
$(1/\upbeta_{\kS})$-Lipschitzian gradient, and suppose that 
$\mathsf{L}_{\kS\iS}\colon\HS_{\iS}\to\GS_{\kS}$ is linear. 
In addition, let $\upchi\in\RP$ and let
$\upTheta\colon\HHS\to\RR$ be convex and differentiable with a
$\upchi$-Lipschitzian gradient. The objective is to
\begin{equation}
\label{e:2p}
\minimize{\boldsymbol{\xS}\in\HHS}{
\upTheta(\boldsymbol{\xS})
+\sum_{\iii}\brk1{\mathsf{f}_{\iS}(\xS_{\iS})
+\upvarphi_{\iS}(\xS_{\iS})}
+\sum_{\kkk}\brk1{(\mathsf{g}_{\kS}+\uppsi_{\kS})
\infconv\mathsf{h}_{\kS}}
\brk3{\sum_{\iii}\LS_{\kS\iS}\xS_{\iS}}}.
\end{equation}
We denote by $\mathscr{P}$ the set of solutions to \eqref{e:2p}.
\end{problem}

\begin{algorithm}
\label{algo:6}
Consider the setting of Problem~\ref{prob:2} and suppose that
Assumptions~\ref{a:1} and \ref{a:2} are in force with,
for every $\iS\in\II$ and every $\kS\in\KK$,
$\upalpha_{\iS}^{\CC}=\upalpha_{\iS}$, 
$\upbeta_{\kS}^{\CC}=\upbeta_{\kS}$, 
$\upalpha_{\iS}^{\LL}=\upbeta_{\kS}^{\LL}
=\updelta_{\kS}^{\CC}=\updelta_{\kS}^{\LL}=0$, and
$\nabla_{\!\iS}\,\upTheta$ denotes the partial derivative of 
$\upTheta$ relative to $\HS_{\iS}$. Iterate as in
\eqref{e:long1}, where the following adjustments are made
\begin{equation}
\begin{cases}
\mathsf{J}_{\upgamma_{\iS,\nS}\AS_{\iS}^{}}=
\prox_{\upgamma_{\iS,\nS}\mathsf{f}_{\iS}};\;
\CS_{\iS}=\nabla\upvarphi_{\iS};\;
\QS_{\iS}=\mathsf{0};\;\RS_{\iS}=\nabla_{\!\iS}\,\upTheta;
\;\sS_{\iS}^*=\mathsf{0};\\
\mathsf{J}_{\upmu_{\kS,\nS}\BS_{\kS}^{\MM}}=
\prox_{\upmu_{\kS,\nS}\mathsf{g}_{\kS}};\;
\BS_{\kS}^{\CC}=\nabla\uppsi_{\kS};\;
\mathsf{J}_{\upnu_{\kS,\nS}\DS_{\kS}^{\MM}}=
\prox_{\upnu_{\kS,\nS}\mathsf{h}_{\kS}};\;
\BS_{\kS}^{\LL}=\DS_{\kS}^{\CC}=\DS_{\kS}^{\LL}=\mathsf{0};\;
\rS_{\kS}=\mathsf{0}.
\end{cases}
\end{equation}
\end{algorithm}

\begin{corollary}
\label{c:2}
Consider the setting of Algorithm~\ref{algo:6}. Suppose that
$\inf_{\nnn}\EE(\lambda_{\nS}(2-\lambda_{\nS}))>0$ and that a 
Kuhn--Tucker point 
$(\widetilde{\boldsymbol{\xS}},\widetilde{\boldsymbol{\vS}}^*)
\in\HHS\times\GGS$ exists, that is, 
\begin{equation}
\label{e:18}
(\forall\iS\in\II)(\forall\kS\in\KK)\quad
\begin{cases}
{-}\displaystyle\sum_{\jjJ}\LS_{\jS\iS}^*
\widetilde{\mathsf{v}}_{\jS}^*\in
\partial\mathsf{f}_{\iS}(\widetilde{\xS}_{\iS})+
\nabla\upvarphi_{\iS}(\widetilde{\xS}_{\iS})
+\nabla_{\!{\iS}}\,\upTheta(\widetilde{\boldsymbol{\xS}});\\
\displaystyle\sum_{\jJj}\LS_{{\kS}{\jS}}\widetilde{\xS}_{\jS}\in
\partial\big(\mathsf{g}_{\kS}^*\infconv\uppsi_{\kS}^*\big)
(\widetilde{\mathsf{v}}_{\kS}^*)
+\partial\mathsf{h}_{\kS}^*(\widetilde{\mathsf{v}}_{\kS}^*).
\end{cases}
\end{equation}
Then there exists a $\mathscr{P}$-valued random variable 
$\overline{\boldsymbol{x}}$ such that, for every $\iS\in\II$,
$x_{\iS,\nS}\to\overline{x}_{\iS}\;\Pas$
\end{corollary}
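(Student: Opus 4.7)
The plan is to derive Corollary~\ref{c:2} as a specialization of Theorem~\ref{t:1}, by recognizing that Problem~\ref{prob:2} is an instance of Problem~\ref{prob:1} under the operator assignments fixed in Algorithm~\ref{algo:6}. First I would check each hypothesis of Problem~\ref{prob:1} under these assignments: $\AS_{\iS}=\partial\mathsf{f}_{\iS}$, $\BS_{\kS}^{\MM}=\partial\mathsf{g}_{\kS}$, and $\DS_{\kS}^{\MM}=\partial\mathsf{h}_{\kS}$ are maximally monotone as subdifferentials of functions in $\upGamma_0$; by the Baillon--Haddad theorem, $\CS_{\iS}=\nabla\upvarphi_{\iS}$ is $\upalpha_{\iS}$-cocoercive and $\BS_{\kS}^{\CC}=\nabla\uppsi_{\kS}$ is $\upbeta_{\kS}$-cocoercive; the vanishing operators $\QS_{\iS}$, $\BS_{\kS}^{\LL}$, $\DS_{\kS}^{\CC}$, $\DS_{\kS}^{\LL}$ trivially fulfill the prescribed monotonicity, Lipschitz, and cocoercivity conditions (any positive constant works for the cocoercivity of the zero operator); and $\boldsymbol{\RS}=\nabla\upTheta$ is monotone and $\upchi$-Lipschitzian as the gradient of the convex function $\upTheta$. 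Since $\prox_{\upgamma\mathsf{f}}=\mathsf{J}_{\upgamma\partial\mathsf{f}}$ for $\mathsf{f}\in\upGamma_0$, Algorithm~\ref{algo:6} is then a genuine instance of Algorithm~\ref{algo:1}.

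Next I would show that~\eqref{e:18} supplies a point in $\mathscr{D}$, so that Theorem~\ref{t:1} applies. Under the identifications above, the first line of~\eqref{e:18} is precisely the first inclusion of~\eqref{e:1d} evaluated at $(\widetilde{\boldsymbol{\xS}},\widetilde{\boldsymbol{\vS}}^*)$. For the second, using $(\partial\mathsf{g}_{\kS})^{-1}=\partial\mathsf{g}_{\kS}^*$, $(\partial\mathsf{h}_{\kS})^{-1}=\partial\mathsf{h}_{\kS}^*$, and $(\mathsf{g}_{\kS}+\uppsi_{\kS})^*=\mathsf{g}_{\kS}^*\infconv\uppsi_{\kS}^*$, a Fenchel--Rockafellar computation rewrites it as $\widetilde{\mathsf{v}}_{\kS}^*\in\brk[s]2{(\partial\mathsf{g}_{\kS}+\nabla\uppsi_{\kS})\infconv\partial\mathsf{h}_{\kS}}\brk1{\sum_{\jJj}\LS_{\kS\jS}\widetilde{\xS}_{\jS}}$, which matches the second inclusion of~\eqref{e:1d}. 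Hence $\widetilde{\boldsymbol{\vS}}^*\in\mathscr{D}$ and, in particular, $\mathscr{D}\neq\emp$.

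All hypotheses of Theorem~\ref{t:1} being in force, Theorem~\ref{t:1}\ref{t:1iii} yields a random variable $\overline{\boldsymbol{x}}$ with values in the solution set of~\eqref{e:1p} and satisfying $x_{\iS,\nS}\to\overline{x}_{\iS}\;\Pas$ for every $\iS\in\II$. To conclude, I would observe that, under the operator assignments and via the same conjugacy identity invoked above, \eqref{e:1p} is precisely the Fermat rule for~\eqref{e:2p}, so that $\overline{\boldsymbol{x}}$ actually takes values in the set $\mathscr{P}$ of minimizers of~\eqref{e:2p}. The main obstacle is not technically deep but notational: carefully matching the highly structured Problem~\ref{prob:1} to~\eqref{e:2p} and tracking the Fenchel--Rockafellar identities that simultaneously yield $\mathscr{D}\neq\emp$ from~\eqref{e:18} and the Fermat-rule equivalence for~\eqref{e:2p}. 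The substantive work is carried entirely by Theorem~\ref{t:1}.
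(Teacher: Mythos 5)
Your proposal is correct and follows exactly the route the paper intends: the corollary is stated without proof precisely because it is the specialization of Theorem~\ref{t:1} obtained by instantiating Problem~\ref{prob:1} with the subdifferential/gradient assignments of Algorithm~\ref{algo:6}, verifying via Baillon--Haddad and the conjugacy identities that \eqref{e:18} yields $\mathscr{D}\neq\emp$, and then identifying \eqref{e:1p} with the Fermat rule for \eqref{e:2p}. Your handling of the degenerate operators (zero operators being cocoercive for any positive constant) and of the parallel-sum/inf-convolution dualization is the right way to close the only nontrivial bookkeeping steps.
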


\newpage

\section{Randomized block-iterative Kuhn--Tucker projective
splitting}
\label{sec:6}

We revisit a multivariate primal-dual inclusion problem studied 
in \cite{MaPr18} and randomize the algorithm proposed there to
solve it. See also \cite[Section~9]{Acnu24} and \cite{Ecks17} for
further discussions on the deterministic setting.

\begin{problem}
\label{prob:4}
Let $(\HS_{\iS})_{\iii}$ and $(\GS_{\kS})_{\kkk}$ be finite
families of Euclidean spaces with respective direct sums
$\HHS=\bigoplus_{\iii}\HS_{\iS}$ and
$\GGS=\bigoplus_{\kkk}\GS_{\kS}$. Denote by
$\boldsymbol{\xS}=(\xS_{\iS})_{\iii}$ a generic element in $\HHS$.
For every $\iS\in\II$ and every $\kS\in\KK$,
$\AS_{\iS}\colon\HS_{\iS}\to 2^{\HS_{\iS}}$ is maximally monotone,
$\BS_{\kS}\colon\GS_{\kS}\to 2^{\GS_{\kS}}$ is maximally monotone, 
and $\LS_{\kS\iS}\colon\HS_{\iS}\to\GS_{\kS}$ is linear. The
objective is to solve the primal problem
\begin{equation}
\label{e:4p}
\text{find}\;\:\overline{\boldsymbol{\xS}}\in\HHS
\;\:\text{such that}\;\:(\forall\iS\in\II)\;\;
0\in\AS_{\iS}\overline{\xS}_{\iS}
+\Sum_{\kkk}\LS_{\kS\iS}^*\Bigg(\BS_{\kS}
\Bigg(\Sum_{\jS\in\mbox{\scriptsize\ttfamily{I}}}
\LS_{\kS\jS}\overline{\xS}_{\jS}\Bigg)\Bigg)
\end{equation}
and the associated dual problem
\begin{equation}
\label{e:4d}
\text{find}\;\:\overline{\boldsymbol{\vS}}^*\in\GGS
\;\:\text{such that}\;\:
(\exi\boldsymbol{\xS}\in\HHS)
\quad
\begin{cases}
(\forall\iS\in\II)\quad\xS_{\iS}\in\AS_{\iS}^{-1}\brk3{
-\Sum_{\kS\in\mbox{\scriptsize\ttfamily{K}}}
\LS_{\kS\iS}^*\overline{\vS}_{\kS}^*};\\
(\forall\kS\in\KK)\quad
\Sum_{\iS\in\mbox{\scriptsize\ttfamily{I}}}\LS_{\kS\iS}\xS_{\iS}
\in\BS_{\kS}^{-1}\overline{\vS}_{\kS}^*.
\end{cases}
\end{equation}
Finally, $\mathscr{P}$ denotes the set of solutions to
\eqref{e:4p} and $\mathscr{D}$ the set of solutions to
\eqref{e:4d}.
\end{problem}

The Kuhn--Tucker operator associated with Problem~\ref{prob:4}
is \cite[Equation~(9.18)]{Acnu24}
\begin{equation}
\label{e:179}
{\boldsymbol{\mathsf{W}}}\colon\HHS\oplus\GGS\to 
2^{\HHS\oplus\GGS}\colon
(\boldsymbol{\xS},\boldsymbol{\vS}^*)
\mapsto\brk4{
\bigtimes_{\iii}\bigg(\AS_{\iS}\xS_{\iS}
+\sum_{\kkk}\LS^*_{\kS\iS}\vS^*_{\kS}\bigg),
\bigtimes_{\kkk}\brk3{\BS_{\kS}^{-1}\vS^*_{\kS}-
\sum_{\iii}\LS_{\kS\iS}\xS_{\iS}}~}
\end{equation}
As shown in \cite[Lemma~9.7(ii)]{Acnu24}, 
$\zer{\boldsymbol{\mathsf{W}}}\subset\mathscr{P}\times\mathscr{D}$.
We can therefore approach Problem~\ref{prob:4} as an instance of
Problem~\ref{prob:19} with 
${\boldsymbol{\mathsf{C}}}={\boldsymbol{\mathsf{0}}}$ and
then $\upalpha$ can be selected arbitrarily large. By applying
Theorem~\ref{t:5} in this context, we obtain a randomized version
of the deterministic algorithm of \cite{MaPr18}, which relied on
Algorithm~\ref{algo:19}. To this end, let us make the following
assumption.

\begin{assumption}
\label{a:3}
In the setting of Problem~\ref{prob:4}, set
$\upvarepsilon\in\zeroun$ and suppose that for every $\iS\in\II$,
every $\kS\in\KK$, and every $\nnn$, 
$\upgamma_{\iS,\nS}\in\left[\upvarepsilon,1/\upvarepsilon\right]$,
$\upmu_{\kS,\nS}\in\left[\upvarepsilon,1/\upvarepsilon\right]$,
$x_{\iS,0}\in L^2(\upOmega,\FE,\PP;\HS_{\iS})$, and
$v_{\kS,0}^*\in L^2(\upOmega,\FE,\PP;\GS_{\kS})$.
\end{assumption}

\newpage
\begin{algorithm}
\label{algo:7}
Consider the setting of Problem~\ref{prob:4} and suppose that
Assumptions~\ref{a:1} and \ref{a:3} are in force. Let
$\uprho\in[2,\pinf[$ and iterate
\begin{equation}
\label{e:long7}
\begin{array}{l}
\text{for}\;\nS=0,1,\ldots\\
\left\lfloor
\begin{array}{l}
\text{for every}\;\iS\in I_{\nS}\\
\left\lfloor
\begin{array}{l}
l_{\iS,\nS}^*=\sum_{\kkk}\LS_{\kS\iS}^*v_{\kS,\nS}^*;\\
a_{\iS,\nS}=\mathsf{J}_{\upgamma_{\iS,\nS}\AS_{\iS}}\big(
x_{\iS,\nS}-\upgamma_{\iS,\nS}l_{\iS,\nS}^*\big);\;
a_{\iS,\nS}^*=\upgamma_{\iS,\nS}^{-1}(x_{\iS,\nS}
-a_{\iS,\nS})-l_{\iS,\nS}^*;\\
\end{array}
\right.\\
\text{for every}\;\iS\in\II\smallsetminus I_{\nS}\\
\left\lfloor
\begin{array}{l}
a_{\iS,\nS}=a_{\iS,\nS-1};\;a_{\iS,\nS}^*=a_{\iS,\nS-1}^*;\\
\end{array}
\right.\\
\text{for every}\;\kS\in K_{\nS}\\
\left\lfloor
\begin{array}{l}
l_{\kS,\nS}=\sum_{\iii}\LS_{\kS\iS}x_{\iS,\nS};\\
b_{\kS,\nS}=\mathsf{J}_{\upmu_{\kS,\nS}\BS_{\kS}}
\big(l_{\kS,\nS}
+\upmu_{\kS,\nS}v_{\kS,\nS}^*\big);\;
b_{\kS,\nS}^*=v_{\kS,\nS}^*+\upmu_{\kS,\nS}^{-1}\big(l_{\kS,\nS}
-b_{\kS,\nS}\big);\
\end{array}
\right.\\
\text{for every}\;\kS\in\KK\smallsetminus K_{\nS}\\
\left\lfloor
\begin{array}{l}
b_{\kS,\nS}=b_{\kS,\nS-1};\;
b_{\kS,\nS}^*=b_{\kS,\nS-1}^*;
\end{array}
\right.\\
\text{for every}\;\iS\in\II\\
\left\lfloor
\begin{array}{l}
t_{\iS,\nS}^*=a_{\iS,\nS}^*
+\sum_{\kkk}\LS_{\kS\iS}^*b_{\kS,\nS}^*;
\end{array}
\right.\\[1mm]
\text{for every}\;\kS\in\KK\\
\left\lfloor
\begin{array}{l}
t_{\kS,\nS}=b_{\kS,\nS}^*
+\sum_{\iii}\LS_{\kS\iS}a_{\iS,\nS}^*;
\end{array}
\right.\\[1mm]
\Delta_{\nS}=
\sum_{\iii}\big(\scal{x_{\iS,\nS}}{t_{\iS,\nS}^*}
-\scal{a_{\iS,\nS}}{a_{\iS,\nS}^*}\big)
+\sum_{\kkk}
\big(\scal{t_{\kS,\nS}}{v_{\kS,\nS}^*}
+\scal{b_{\kS,\nS}}{b_{\kS,\nS}^*}\big);\\[2mm]
\theta_{\nS}=
\dfrac{\mathsf{1}_{[\Delta_{\nS}>0]}\Delta_{\nS}}
{\sum_{\iii}\|t_{\iS,\nS}^*\|^2
+\sum_{\kkk}\|t_{\kS,\nS}\|^2
+\mathsf{1}_{[\Delta_{\nS}\leq 0]}};\\
\text{take}\;\lambda_{\nS}\in 
L^\infty(\upOmega,\FE,\PP;[\upvarepsilon,\uprho])\\
\text{for every}\;\iS\in\II\\
\left\lfloor
\begin{array}{l}
x_{\iS,\nS+1}=x_{\iS,\nS}-\lambda_{\nS}\theta_{\nS}t_{\iS,\nS}^*;
\end{array}
\right.\\[1mm]
\text{for every}\;\kS\in\KK\\
\left\lfloor
\begin{array}{l}
v_{\kS,\nS+1}^*=v_{\kS,\nS}^*-\lambda_{\nS}\theta_{\nS}t_{\kS,\nS}.
\end{array}
\right.\\[1mm]
\end{array}
\right.
\end{array}
\end{equation}
\end{algorithm}

The convergence properties of Algorithm~\ref{algo:7} are
established in the following theorem.

\begin{theorem}
\label{t:7}
Consider the setting of Algorithm~\ref{algo:7}. Suppose that
$\mathscr{D}\neq\emp$ and
$\inf_{\nnn}\EE(\lambda_{\nS}(2-\lambda_{\nS}))>0$. Then 
there exist a $\mathscr{P}$-valued random variable 
$\overline{\boldsymbol{x}}$ and a $\mathscr{D}$-valued random
variable $\overline{\boldsymbol{v}}^*$ such that, for every 
$\iS\in\II$ and every $\kS\in\KK$, 
$x_{\iS,\nS}\to\overline{x}_{\iS}\;\Pas$ 
and $v_{\kS,\nS}^*\to\overline{v}_{\kS}^*\;\Pas$
\end{theorem}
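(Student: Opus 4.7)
The plan is to apply Theorem~\ref{t:5} to the Kuhn--Tucker operator $\boldsymbol{\mathsf{W}}$ from \eqref{e:179}, working in the Euclidean product space $\HHS\oplus\GGS$. Problem~\ref{prob:4} fits Problem~\ref{prob:19} with $\WS=\boldsymbol{\mathsf{W}}$ and $\CS=\boldsymbol{\mathsf{0}}$, so that $\upalpha$ may be chosen arbitrarily large. The assumption $\mathscr{D}\neq\emp$ combined with \cite[Lemma~9.7(ii)]{Acnu24} gives $\zer\boldsymbol{\mathsf{W}}\neq\emp$, so the solvability hypothesis of Problem~\ref{prob:19} holds. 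The structure of the argument closely parallels that of Theorem~\ref{t:1}, but with several simplifications from the absence of cocoercive, Lipschitz-monotone, and $\boldsymbol{\RS}$ contributions.

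First, following the template of the proof of Theorem~\ref{t:1}, I would introduce the random activation times $\vartheta_{\iS}(\nS)=\max\{\jS\leq\nS:\iS\in I_{\jS}\}$ and $\varrho_{\kS}(\nS)=\max\{\jS\leq\nS:\kS\in K_{\jS}\}$, and set $\bunder{\boldsymbol{x}}_{\nS}=(\boldsymbol{x}_{\nS},\boldsymbol{v}_{\nS}^{*})$, $\bunder{\boldsymbol{w}}_{\nS}=\bunder{\boldsymbol{q}}_{\nS}=(\boldsymbol{a}_{\nS},\boldsymbol{b}_{\nS}^{*})$, with $\bunder{\boldsymbol{w}}_{\nS}^{*}=\bunder{\boldsymbol{t}}_{\nS}^{*}$ built from the quantities $(\boldsymbol{t}_{\nS}^{*},\boldsymbol{t}_{\nS})$ of \eqref{e:long7} and all errors $\bunder{\boldsymbol{e}}_{\nS}=\bunder{\boldsymbol{e}}_{\nS}^{*}=\bunder{\boldsymbol{f}}_{\nS}^{*}=\bunder{\boldsymbol{\mathsf{0}}}$, so that $\varepsilon_{\nS}(\cdot,\bunder{\boldsymbol{\zS}})=0\;\Pas$ The defining equations of the resolvents, together with \cite[Proposition~23.2(ii)]{Livre1}, yield $a_{\iS,\nS}^{*}\in\AS_{\iS}a_{\iS,\nS}$ and $b_{\kS,\nS}\in\BS_{\kS}^{-1}b_{\kS,\nS}^{*}$ a.s.\ at the most recent activation of each coordinate, from which $(\bunder{\boldsymbol{w}}_{\nS},\bunder{\boldsymbol{w}}_{\nS}^{*})\in\gra\boldsymbol{\mathsf{W}}\;\Pas$ The identity $\Delta_{\nS}=\scal{\bunder{\boldsymbol{x}}_{\nS}-\bunder{\boldsymbol{w}}_{\nS}}{\bunder{\boldsymbol{t}}_{\nS}^{*}}$ follows after rearrangement and cancellation of the adjoint cross-terms involving the pairs $(\LS_{\kS\iS},\LS_{\kS\iS}^{*})$, and $L^{2}$-integrability propagates inductively via Lemmas~\ref{l:100}--\ref{l:101} and the Lipschitz continuity of the resolvents. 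Hence \eqref{e:long7} is a realization of \eqref{e:a20}.

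Then Theorem~\ref{t:5}\ref{t:5i-}--\ref{t:5ii} yields $(\bunder{\boldsymbol{x}}_{\nS})_{\nnn}\subset L^{2}(\upOmega,\FE,\PP;\HHS\oplus\GGS)$, a.s.\ boundedness, and $\sum_{\nnn}\EE\|\bunder{\boldsymbol{x}}_{\nS+1}-\bunder{\boldsymbol{x}}_{\nS}\|^{2}<\pinf$, while Theorem~\ref{t:5}\ref{t:5iii} delivers $\varlimsup\Delta_{\nS}\leq 0\;\Pas$ Combining the square-summability with Assumption~\ref{a:1} triggers Proposition~\ref{p:4} and Lemma~\ref{l:1}, giving $\boldsymbol{x}_{\vartheta_{\iS}(\nS)}-\boldsymbol{x}_{\nS}\Pto\boldsymbol{\mathsf{0}}$, $\boldsymbol{v}_{\varrho_{\kS}(\nS)}^{*}-\boldsymbol{v}_{\nS}^{*}\Pto\boldsymbol{\mathsf{0}}$, and the analogous relations for the other stalled coordinates. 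Using the resolvent representations $a_{\iS,\nS}^{*}+l_{\iS,\vartheta_{\iS}(\nS)}^{*}=\upgamma_{\iS,\vartheta_{\iS}(\nS)}^{-1}(x_{\iS,\vartheta_{\iS}(\nS)}-a_{\iS,\nS})$ and $v_{\kS,\varrho_{\kS}(\nS)}^{*}-b_{\kS,\nS}^{*}=\upmu_{\kS,\varrho_{\kS}(\nS)}^{-1}(b_{\kS,\nS}-l_{\kS,\varrho_{\kS}(\nS)})$, together with the bounds $\upgamma_{\iS,\nS},\upmu_{\kS,\nS}\in[\upvarepsilon,1/\upvarepsilon]$ from Assumption~\ref{a:3}, I would decompose $\Delta_{\nS}=\xi_{\nS}+\chi_{\nS}$ with $\xi_{\nS}\geq\upvarepsilon(\|\boldsymbol{x}_{\nS}-\boldsymbol{a}_{\nS}\|^{2}+\|\boldsymbol{v}_{\nS}^{*}-\boldsymbol{b}_{\nS}^{*}\|^{2})\;\Pas$ and $\chi_{\nS}\Pto 0$ by Cauchy--Schwarz applied to the stalled-coordinate differences and the a.s.\ boundedness of the iterates. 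Lemma~\ref{l:3} then forces $\boldsymbol{x}_{\nS}-\boldsymbol{a}_{\nS}\Pto\boldsymbol{\mathsf{0}}$ and $\boldsymbol{v}_{\nS}^{*}-\boldsymbol{b}_{\nS}^{*}\Pto\boldsymbol{\mathsf{0}}$.

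These convergences and the Lipschitz continuity of each $\LS_{\kS\iS}$ imply $\bunder{\boldsymbol{x}}_{\nS}-\bunder{\boldsymbol{w}}_{\nS}\Pto\bunder{\boldsymbol{\mathsf{0}}}$, $\bunder{\boldsymbol{w}}_{\nS}-\bunder{\boldsymbol{q}}_{\nS}\Pto\bunder{\boldsymbol{\mathsf{0}}}$, and $\bunder{\boldsymbol{w}}_{\nS}^{*}\Pto\bunder{\boldsymbol{\mathsf{0}}}$, so Theorem~\ref{t:5}\ref{t:5iv} (valid because $\HHS\oplus\GGS$ is Euclidean) delivers $\bunder{\boldsymbol{x}}_{\nS}\to\overline{\bunder{\boldsymbol{x}}}=(\overline{\boldsymbol{x}},\overline{\boldsymbol{v}}^{*})\;\Pas$ with $\overline{\bunder{\boldsymbol{x}}}\in\zer\boldsymbol{\mathsf{W}}$; invoking \cite[Lemma~9.7(ii)]{Acnu24} then places $\overline{\boldsymbol{x}}\in\mathscr{P}$ and $\overline{\boldsymbol{v}}^{*}\in\mathscr{D}$. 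The principal obstacle is the algebraic step in Paragraph~2, namely identifying $\Delta_{\nS}$ in \eqref{e:long7} with $\scal{\bunder{\boldsymbol{x}}_{\nS}-\bunder{\boldsymbol{w}}_{\nS}}{\bunder{\boldsymbol{t}}_{\nS}^{*}}$ through cancellation of the adjoint-pair cross-terms, and the subsequent extraction in Paragraph~3 of the strong-monotonicity lower bound $\xi_{\nS}$ in the absence of any cocoercive or Lipschitz-monotone component to invoke Lemma~\ref{l:A2}; this must be carried out by hand from the resolvent identities, with the time-shift between $\vartheta_{\iS}(\nS)$, $\varrho_{\kS}(\nS)$ and $\nS$ absorbed into $\chi_{\nS}$.
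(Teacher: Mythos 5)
Your proposal is correct and follows essentially the same route as the paper, whose own proof is only a sketch stating that one applies Theorem~\ref{t:5} to the Kuhn--Tucker operator $\boldsymbol{\mathsf{W}}$ with $\boldsymbol{\mathsf{C}}=\boldsymbol{\mathsf{0}}$, following the deterministic pattern of \cite[Theorem~13]{MaPr18} together with the probabilistic arguments of the proof of Theorem~\ref{t:1}; your expansion (activation times, zero errors so that $\varepsilon_{\nS}(\cdot,\bunder{\boldsymbol{\zS}})=0$, the $\xi_{\nS}+\chi_{\nS}\leq\Delta_{\nS}$ decomposition via Lemma~\ref{l:3}, Proposition~\ref{p:4}, and the finite-dimensional conclusion via Theorem~\ref{t:5}\ref{t:5iv}) is exactly what that sketch intends.
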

\begin{proof}(Sketch)
We apply Theorem~\ref{t:5} to find a zero 
$(\boldsymbol{\mathsf{x}},\boldsymbol{\mathsf{v}}^*)$ of
$\boldsymbol{\mathsf{W}}$ following the deterministic pattern of
the proof of \cite[Theorem~13]{MaPr18} and using probabilistic
arguments made in the proof of Theorem~\ref{t:1}, which shares the
same Assumption~\ref{a:1} and involves a more sophisticated version
of Assumption~\ref{a:3}.
\end{proof}

\begin{remark}
\label{r:76}
We complement Theorem~\ref{t:7} with the following observations.
\begin{enumerate}
\item
In the case of deterministic relaxations 
$(\uplambda_{\nS})_{\nnn}$ in $\left]0,2\right[$ and deterministic
blocks selection, Theorem~\ref{t:7} appears in 
\cite[Theorem~13]{MaPr18}.
\item
A stochastic block-iterative algorithm for solving 
Problem~\ref{prob:4} was proposed in \cite[Corollary~5.3]{Siop15},
with almost sure convergence of its iterates. This algorithm
involves deterministic relaxations in $\left]0,2\right[$ and
necessitates inversions to handle the linear operators. In the case
when $\II$ is a singleton, further algorithms with the same
features were proposed in \cite{Siim25}. The algorithm of 
\cite[Proposition~4.6]{Pesq15} also guarantees almost sure
convergence of the iterates but it requires knowledge of the norms
of linear operators. The same comments apply to the algorithm of
\cite[Theorem~2.1 and Algorithm~3.1]{Cham24}, which considers the
minimization case with $\II$ as a singleton. Additionally, none of
these prior works show convergence in $L^2$, nor can they benefit
from adaptive strategies as in Assumption~\ref{a:1} since their
block-selection distributions remain constant throughout the
iterations.
\item 
As in Remark~\ref{r:77}, stochastic errors can be introduced in 
the evaluations of the resolvents in Algorithm~\ref{algo:7}. 
\end{enumerate} 
\end{remark}

\end{document}